\numberwithin{equation}{subsection}
\newlist{clist}{enumerate}{1}
\setlist*[clist]{label=(\roman*), nosep}
\crefname{thm}{Theorem}{Theorems}
\crefname{asm}{Assumption}{Assumptions}
\crefname{cor}{Corollary}{Corollaries}
\crefname{dfn}{Definition}{Definitions}
\crefname{fct}{Fact}{Facts}
\crefname{lem}{Lemma}{Lemmas}
\crefname{mth}{Theorem}{Theorem}
\crefname{prp}{Proposition}{Propositions}
\crefname{rmk}{Remark}{Remarks}
\crefname{eg}{Example}{Examples}
\crefname{figure}{Figure}{Figures}
\crefname{table}{Table}{Tables}
\crefname{section}{\S\!}{\S\S\!}
\crefname{subsection}{\S\!}{\S\S\!}
\crefname{subsubsection}{\S\!}{\S\S\!}
\crefname{equation}{}{}
\theoremstyle{definition}
\newtheorem{thm}{Theorem}[subsection]
\newtheorem{dfn}[thm]{Definition}
\newtheorem{lem}[thm]{Lemma}
\newtheorem{prp}[thm]{Proposition}
\newtheorem{rmk}[thm]{Remark}
\newtheorem{eg}[thm]{Example}
\newtheorem*{rmk*}{Remark}
\newcommand{\ol}{\overline}
\newcommand{\wt}{\widetilde}
\newcommand{\bl}{\bullet}
\newcommand{\ve}{\varepsilon}
\newcommand{\pdd}{\partial}
\newcommand{\ceq}{\coloneqq} 
\newcommand{\bs}{\mathbin{\setminus}}
\newcommand{\xr}{\xrightarrow}
\newcommand{\xrr}[1]{\xrightarrow{\, #1 \, }}
\newcommand{\inj}{\hookrightarrow}
\newcommand{\lto}{\longrightarrow}
\newcommand{\sto}{\xr{\sim}}
\newcommand{\lsto}{\xrr{\sim}}
\newcommand{\linj}{\lhook\joinrel\longrightarrow}
\newcommand{\mto}{\mapsto}
\newcommand{\lmto}{\longmapsto}
\newcommand{\siff}{\mathbin{\Leftrightarrow}}
\newcommand{\ev}{\ol{0}}
\newcommand{\od}{\ol{1}}
\newcommand{\Aff}{\mathrm{Aff}}
\newcommand{\vac}{\ket{0}} 
\newcommand{\bbA}{\mathbb{A}}
\newcommand{\bbG}{\mathbb{G}}
\newcommand{\bbN}{\mathbb{N}}
\newcommand{\bbZ}{\mathbb{Z}}
\newcommand{\bbK}{\mathbb{K}}
\newcommand{\clD}{\mathcal{D}}
\newcommand{\clH}{\mathcal{H}}
\newcommand{\clO}{\mathcal{O}}
\newcommand{\frg}{\mathfrak{g}}
\newcommand{\frS}{\mathfrak{S}}
\newcommand{\shA}{\mathscr{A}}
\newcommand{\shO}{\mathscr{O}}
\newcommand{\shD}{\mathscr{D}}
\newcommand{\rmT}{\mathrm{T}}
\newcommand{\ch}{\textup{ch}}
\newcommand{\tred}{\textup{red}}
\newcommand{\ash}{\mathord{\text{\textexclamdown}}} 
\newcommand{\cC}{\mathsf{C}}
\newcommand{\cOp}{\mathsf{Op}}
\newcommand{\coOp}{\mathsf{Op}_{\ev}}
\newcommand{\cSMod}{\mathop{\mathsf{Mod}}{\frS}}
\newcommand{\cLin}{\mathop{\mathsf{Mod}}{\bbK}}
\newcommand{\cgLin}{\mathop{\mathsf{grMod}}{\bbK}}
\DeclareMathOperator{\ad}{ad}
\DeclareMathOperator{\id}{id}
\DeclareMathOperator{\MC}{MC}
\DeclareMathOperator{\oC}{\mathcal{C}}
\DeclareMathOperator{\oP}{\mathcal{P}}
\DeclareMathOperator{\oQ}{\mathcal{Q}}
\DeclareMathOperator{\oT}{\mathcal{T}}
\DeclareMathOperator{\oAss}{\mathcal{A}\mathit{ssoc}}
\DeclareMathOperator{\oCom}{\mathcal{C}{\kern-0.1em}\mathit{om}}
\DeclareMathOperator{\oHom}{\mathcal{H}{\kern-0.1em}\mathit{om}}
\DeclareMathOperator{\oLie}{\mathcal{L}{\kern-0.5pt}\mathit{ie}}
\DeclareMathOperator{\sgn}{sgn}
\DeclareMathOperator{\End}{End}
\DeclareMathOperator{\Hom}{Hom}
\DeclareMathOperator{\Img}{Im}
\DeclareMathOperator{\Ker}{Ker}
\DeclareMathOperator{\Res}{Res}
\DeclareMathOperator{\Spec}{Spec}
\DeclareMathOperator{\crMod}{\mathsf{Mod}}
\DeclareMathOperator{\Cas}{Cas}
\DeclareMathOperator{\Der}{Der}
\DeclareMathOperator{\Ind}{Inder}
\newcommand{\abs}[1]{\left| #1 \right|}
\newcommand{\rst}[2]{\left. #1 \right|_{#2}}
\newcommand{\oPch}[1]{\mathcal{P}^{\textup{ch}}_{#1}}
\newcommand{\oPchB}[1]{\mathop{\mathcal{P}^{\textup{ch}N_{\bullet}=N}_{#1}}}
\newcommand{\oPchW}[1]{\mathop{\mathcal{P}^{\textup{ch}N_W=N}_{#1}}}
\newcommand{\oPchK}[1]{\mathop{\mathcal{P}^{\textup{ch}N_K=N}_{#1}}}
\newcommand{\oPchWT}[1]{\mathop{\mathcal{P}^{\textup{ch}N_W=N, \mathrm{T}}_{#1}}}
\newcommand{\oPchKT}[1]{\mathop{\mathcal{P}^{\textup{ch}N_K=N, \mathrm{T}}_{#1}}}
\newcommand{\oCh}[1]{\mathop{\mathcal{C}{\kern-0.6pt}hom_{#1}}}
\newcommand{\oChW}[1]{\mathop{\mathcal{C}{\kern-0.6pt}hom^{N_W=N}_{#1}}}
\newcommand{\oChK}[1]{\mathop{\mathcal{C}{\kern-0.6pt}hom^{N_K=N}_{#1}}}
\begin{document}

\title{Algebraic operad of SUSY vertex algebra}
\author{Yusuke Nishinaka, Shintarou Yanagida}
\date{29 September 2022; revised 30 April 2023} 
\address{Graduate School of Mathematics, Nagoya University.
 Furocho, Chikusaku, Nagoya, Japan, 464-8602.}
\email{m21035a@math.nagoya-u.ac.jp, yanagida@math.nagoya-u.ac.jp}
\thanks{Y.N. is supported by JSPS Research Fellowship for Young Scientists (No.\ 23KJ1120). 
 S.Y.\ is supported by JSPS KAKENHI Grant Number 19K03399.}
\keywords{SUSY vertex algebras, SUSY chiral algebras, chiral operads, operadic homological algebra}

\begin{abstract}
We introduce algebraic operads $\mathcal{P}^{\textup{ch}N_W=N}$ and $\mathcal{P}^{\textup{ch}N_K=N}$ encoding the structures of $N_W=N$ and $N_K=N$ SUSY vertex algebras, and study the corresponding cohomology theory. Our operad is a SUSY analogue of the operad $P^{\textup{ch}}$ introduced by Bakalov, De Sole, Heluani and Kac (2019) as a purely algebraic translation of the chiral operad of Beilinson and Drinfeld.
\end{abstract}

\maketitle
{\small \tableofcontents}

\setcounter{section}{-1}
\section{Introduction}\label{s:0}

\subsubsection*{Vertex algebras, chiral algebras and chiral operad}

Nowadays vertex algebras appear in various branches of mathematics, mathematical physics and theoretical physics. They were introduced by Borcherds \cite{Bo} as an algebraic framework of the operator algebra structure appearing in the chiral part of two-dimensional conformal field theory. This framework fits very well with the representation theory of infinite-dimensional algebras, and has made enormous progress since its appearance until today. 

On the other hand, at least since the mid-1980s, it was expected that there should be a geometric framework for chiral conformal field theory. Along this expectation, together with the viewpoint of the geometric Langlands program, Beilinson and Drinfeld started in the early 1990s to develop the algebro-geometric framework, which resulted in their book \cite{BD} of chiral algebras. At present, both vertex algebras and chiral algebras are used in the study of algebraic and geometric representation theory, and in the theoretical physics of conformal field theories and their cousins. Let us cite \cite{FBZ} for the reference that explains both the framework of vertex algebras and chiral algebras.

The theory of chiral algebras is a flexible framework of chiral conformal field theory, allowing geometrical studies. But it has one drawback: the definition is built on various complicated structures using the theory of $\clD$-modules and that of operads, so that it is non-trivial for non-experts to check the relation \cite[0.15]{BD} between chiral algebras and Borcherds' axiom of vertex algebras. A feature of the theory is that a chiral algebra is defined to be a Lie algebra in a non-standard monoidal structure. More precisely, in the category of $\clD$-modules on a smooth algebraic curve, Beilinson and Drinfeld introduced the chiral tensor product and the \emph{chiral operad}. The latter object might best be called a sheaf operad, which is neither an algebraic operad nor a topological operad appearing commonly. Then a (non-unital) chiral algebra is defined to be an operad morphism from the Lie operad to the chiral operad. Starting from such a definition, it is not an easy task to recover Borcherds' axiom of vertex algebras.  

In the late 2010s, Bakalov, De Sole, Heluani and Kac \cite{BDHK} started their investigation of the cohomology theory of vertex algebras, which continues up to present \cite{BDHK2,BDK20,BDK21,BDKV21}. As far as we understand, their study sits in the sequence of the investigation of Lie conformal algebras, vertex Poisson algebras and their cohomology theories which started in the late 1990s (see \cite{BKV,DK13} for example). Among several results in \cite{BDHK}, they constructed an algebraic operad $P^{\ch}_V$ for each linear superspace $V$, and showed that an operad morphism from the Lie operad $\oLie$ to $P^{\ch}_V$ corresponds bijectively to a structure of the vertex superalgebra on $V$:
\begin{align}\label{eq:0:Pch}
 \Hom_{\coOp}(\oLie,P^{\ch}_V) \lsto \{\text{vertex algebra structures on $V$}\}.
\end{align}
Here $\coOp$ denotes the category of operads (see \cref{ss:1:op}, \eqref{eq:1:coOp-cOp} for the convention).
Thus, $P^{\ch}$ (suppressing the linear space $V$) is a purely algebraic counterpart of the chiral operad of Beilinson and Drinfeld. The standard operadic cohomology theory then gives a natural cohomology theory of a vertex algebra, and one can study the relation between the obtained cohomology and that of vertex Poisson algebras. 

\subsubsection*{SUSY vertex algebras and SUSY chiral operad}

This note is written in a simple motivation to give an analogue of the algebraic operad $P^{\ch}$ which will encode the structure of a \emph{SUSY vertex algebra}. The latter object is a superfield analogue of a vertex algebra, introduced by Heluani and Kac \cite{HK}. The framework of SUSY vertex algebras looks very natural from the point of view of superconformal field theory, and there are many examples, especially of a geometric nature. However, as far as we know, mathematical investigations of SUSY vertex algebras have not been pursued as much as those of ordinary vertex (super)algebras.

There are two classes of SUSY vertex algebras: $N_W=N$ and $N_K=N$ SUSY vertex algebras, which originate from the classification of superconformal Lie algebras \cite{KL}. Accordingly, our argument is divided into two parts. We will introduce the algebraic operad $\oPchW{}$ in \cref{s:W} and $\oPchK{}$ in \cref{s:K}, which encode the structure of an $N_W=N$ and $N_K=N$ SUSY vertex algebra, respectively. These \cref{s:W} and \cref{s:K} are the main body of this note.

Our argument basically follows the non-SUSY case given in \cite[\S5-\S7]{BDHK}. We first construct the operads $\oChW{}$ and $\oChK{}$ of SUSY Lie conformal algebras, and then construct the operads $\oPchW{}$ and $\oPchK{}$ of SUSY vertex algebras. Note that although the outline of our construction is quite the same as \cite{BDHK}, we need to make some technical modifications to treat the SUSY case appropriately. For example, in \cref{ss:W:VA} of the construction of the operad $\oPchW{}$, we have to treat carefully the integral formulation of the SUSY vertex algebras, and give detailed calculations in \cref{dfn:W:indef}--\cref{prp:W:Jqas} and \cref{lem:W:circ}--\cref{lem:W:ResFn}, which we could not find in the literature.

The operads $\oPchW{}$ and $\oPchK{}$ give the cohomology complexes of SUSY vertex algebras with coefficients in their modules, whose definition and analysis is given in \cref{s:coh}.

\subsubsection*{Summary of results}

The main objects of this note are the $N_W=N$ SUSY chiral operad $\oPchW{}$ introduced in \cref{dfn:W:Pch} and the $N_K=N$ SUSY chiral operad $\oPchK{}$ introduced in \cref{dfn:K:Pch}. The main statements are \cref{thm:W:VA} and \cref{thm:K:VA} which establish the following bijection \eqref{eq:0:WK-Pch}\footnote{Taking the odd part $\Hom_{\cOp}(\cdot,\cdot)_{\od}$, not the even part, in the left hand side of \eqref{eq:0:WK-Pch} is due to making our convention compatible with that in \cite{BDHK} when we set $N=0$.} between the odd Maurer-Cartan solutions of the operads $\oPchW{},\oPchK{}$ and the structures of the non-unital $N_W=N$ and $N_K=N$ SUSY vertex algebras:
\begin{align}\label{eq:0:WK-Pch}
 \Hom_{\cOp}(\oLie,\oPchB{\Pi^{N+1}V})_{\od} \lsto 
 \{\text{$N=N_{\bl}$ SUSY vertex algebra structures on $(V,\nabla)$}\}, \quad
 \bl = W \text{ or } K. 
\end{align}
Here $\cOp$ denotes the supercategory of superoperads, $\Hom_{\cOp}(\cdot,\cdot)_{\od}$ denotes the linear space of odd morphisms of superoperads (see \cref{ss:1:op}, \eqref{eq:1:coOp-cOp} for these notations), $(V,\nabla)$ denotes an $\clH_W$-supermodule with underlying linear superspace $V$ and $\nabla=(T,S_1,\dotsc,S_N)$ (see \cref{dfn:W:clHW}) and $\Pi$ denotes the parity change functor. Setting $N=0$ in \eqref{eq:0:WK-Pch}, we have $\oPchB{\Pi^{N+1}V}=P^{\ch}_{\Pi V}$ and $\Hom_{\cOp}(\oLie,\oPchB{\Pi^{N+1}V})_{\od}=\Hom_{\cOp}(\oLie,P^{\ch}_{\Pi V})_{\od}=\Hom_{\coOp}(\oLie,P^{\ch}_V)$, and thus we recover the bijection \eqref{eq:0:Pch} in the non-SUSY case. Therefore, \eqref{eq:0:WK-Pch} is a SUSY analogue of \eqref{eq:0:Pch}.

Another block of main results is the cohomology theory given in \cref{s:coh}. The cohomology complex for an $N_W=N$ SUSY vertex algebra is given in \cref{dfn:W:VAmod}, and the interpretation of the low degree cohomology is given in \cref{thm:W:coh}. We have a simple analogue for the $N_K=N$ case, which is briefly explained in \cref{ss:coh:K}.

\subsubsection*{Organization}

Let us explain the organization. 
\begin{itemize}
\item
\cref{s:op} is a preliminary section on superobjects, superoperads and operadic cohomology theory. In particular, in \cref{ss:1:Kos} we will explain the convolution Lie superalgebra $\frg(\oP,\oQ)$ for superoperads $\oP$ and $\oQ$ which encodes superoperad morphisms $\oP \to \oQ$, and explain in \cref{ss:1:LQ} that the Lie superalgebra $\frg(\oLie,\oQ)$ associated with a superoperad $\oQ$ introduced in \cite[\S 3]{BDHK} is nothing but the convolution Lie superalgebra in our sense (\cref{prp:1:LP}).

\item
\cref{s:W} is devoted to the study of the algebraic operad encoding the structure of $N_W=N$ SUSY vertex algebras. We start with the preliminary study of polynomial superalgebras in \cref{ss:W:poly}, which will be ``the base algebra'' of our operad.
Then following the line of \cite[\S5, \S6]{BDHK}, we introduce an operad $\oChW{}$, which encodes the structure of $N_W=N$ SUSY Lie conformal algebras in \cref{ss:W:LCA}, and an operad $\oPchW{}$ encoding that of $N_W=N$ SUSY vertex algebras, which is one of the main objects of this note, in \cref{ss:W:VA}. In \cref{ss:W:CA}, we relate the operad $\oPchW{}$ to the $N_W=N$ SUSY analogue of the Beilinson-Drinfeld chiral operad, following the line of the non-SUSY case \cite[Appendix A]{BDHK}.

\item
\cref{s:K} treats the $N_K=N$ SUSY vertex algebras, and the materials are delivered in the same order as in the previous \cref{s:W}.

\item
The operads $\oPchW{}$ and $\oPchK{}$ naturally give the cohomology complexes of SUSY vertex algebras with coefficients in their modules, which will be studied in \cref{s:coh}. We start with the review of the operadic cohomology theory in \cref{ss:coh:CE}. Then we discuss the cohomology theory for $N_W=N$ SUSY vertex algebras in \cref{ss:coh:W}, and the theory of $N_K=N$ SUSY vertex algebras in \cref{ss:coh:K}.
We conclude this note with some remarks in \cref{ss:cnc}.
\end{itemize}

\subsubsection*{Global notation}

\begin{itemize}
\item
The symbol $\bbN$ denotes the set $\{0,1,2,\dotsc\}$ of non-negative integers. 

\item 
For a positive integer $m$, the symbol $[m]$ denotes the set $\{1,2,\dotsc,m\} \subset \bbZ$. 

\item
For $m,n \in \bbN$, the symbol $\bbN^m_n$ is defined to be 
\begin{align}\label{eq:0:Nkn}
 \bbN^m_n \ceq \{(n_1,\ldots,n_m) \in \bbN^m \mid n_1+\dotsb+n_m=n\}.
\end{align}

\item
For a finite set $S$, we denote by $\# S$ the number of elements in $S$.

\item 
The word `ring' or `algebra' means a unital associative one unless otherwise specified. 

\item
The symbol $\pdd_x$ denotes the partial differential $\frac{\pdd}{\pdd x}$ with respect to $x$.

\item
The symbol $\delta_{m,n}$ denotes the Kronecker delta: $\delta_{m,n}=1$ if $m=n$ and $\delta_{m,n}=0$ otherwise. We will use it in the case $m,n$ are numbers and in the case $m,n$ are sets.
\end{itemize}

\section{Lie algebra structures on operads}\label{s:op}

This section gives an overview of the \emph{universal Lie superalgebra associated to an operad} introduced in \cite[\S3]{BDHK}. Although the definitions and arguments therein is clear, we feel that the reader would require extra explanation on the motivation and backgrounds, and this section is designed to give backgrounds on the theory of algebraic operads and to give some complementary comments on loc.\ cit. We refer to \cite{LV} for the comprehensive reference of algebraic operads, and to \cite{DK13} for the preceding study of the universal Lie superalgebra.

In the starting \cref{ss:1:op}, we introduce notation and terminology on operads, and explain some fundamental notion. We will work in the super setting, following the presentation of \cite[\S2, \S3]{BDHK}. 

In the next \cref{ss:1:Kos}, we give a brief review of the theory of operadic Koszul duality and operadic deformation theory. In particular, for a binary quadratic operad $\oP$ and an operad $\oQ$, we introduce in \eqref{eq:1:gPQ} the convolution Lie superalgebra $\frg(\oP,\oQ)$, which concerns the $\oP$-algebra structure on $\oQ$ as \eqref{eq:1:MCQ}.

In the last \cref{ss:1:LQ}, applying the arguments so far to the case $\oP=\oLie$ (the Lie operad), we recover the universal Lie superalgebra associated to $\oQ$ in the sense of \cite{BDHK}. See \cref{dfn:1:LP} and \cref{prp:1:LP} for the precise statement.

In this section, all the objects (linear spaces, algebras and so on) are defined over the base field $\bbK$ of characteristic $0$ unless otherwise stated. The unit of the field $\bbK$ is denoted by $1_\bbK$.

\subsection{Operads and algebras over operads in super setting}\label{ss:1:op}

Here we collect from \cite{LV} and \cite{BDHK} some basic notions and symbols on algebraic operads in the super setting.

Here is the list of our super terminology, which is more or less the standard one and can be found in \cite{DM} for example.
\begin{itemize}
\item 
We write $\bbZ_2 \ceq \bbZ/2\bbZ$, and denote by $\ol{i} \in \bbZ_2$ the parity of $i \in \bbZ$ . We also define $(-1)^{\ol{i}} \ceq (-1)^i$. 

\item 
A $\bbZ_2$-graded linear space $V$ is called a \emph{linear superspace}. The grading is expressed by $V=V_{\ev}\oplus V_{\od}$. An element of $V_{\ev} \bs \{0\}$ or $V_{\od} \bs \{0\}$ is called \emph{even} or \emph{odd}, respectively. An element of $(V_{\ev} \cup V_{\od}) \bs \{0\}$ is called \text{pure}. The parity $p(x) \in \bbZ_2$ of a pure element $x$ is defined by 
\[
 p(x) \ceq 
 \begin{cases} \ev & (x \in V_{\ev}), \\ \od & (x \in V_{\od}). \end{cases}
\]
Whenever we write $p(x)$ in the text, we assume $x$ is pure. 

\item
For linear superspaces $V$ and $W$, we denote by $\Hom(V,W)$ the linear superspace of linear maps from $V$ to $W$. The $\bbZ_2$-grading is denoted by 
\[
 \Hom(V,W)=\Hom(V,W)_{\ev} \oplus \Hom(V,W)_{\od},
\]
and an element of $\Hom(V,W)_{\ev}$ (resp.\ $\Hom(V,W)_{\od}$) is called an even (resp.\ odd) linear map. Sometimes we denote $\Hom_{\bbK}(V,W) \ceq \Hom(V,W)$ to stress the base field $\bbK$. We also denote by $\End(V)=\End(V)_{\ev} \oplus \End(V)_{\od}$ the linear superspace of linear endomorphisms of $V$. 

\item
A $\bbZ_2$-graded $\bbK$-linear category $\cC$ will be called a \emph{supercategory}.
Thus, for any objects $M$ and $N$ of $\cC$, the morphism set $\Hom_{\cC}(M,N)$ is a $\bbZ_2$-graded $\bbK$-linear space, and the $\Hom_{\cC}(M,N)_{\ev}$ and $\Hom_{\cC}(M,N)_{\od}$ denote the $\bbK$-linear spaces of even and odd morphisms, respectively.

\item
The supercategory of linear superspaces and linear maps is denoted by $\cLin$.
It has the standard monoidal structure, and the tensor product is written by $\otimes \ceq \otimes_{\bbK}$.
See \cite[(1.1)]{DM} for the sign rule (often called the \emph{Koszul sign rule}) of this tensor product.

\item 
The \emph{parity change functor} is denoted by $\Pi$. Thus, for a linear superspace $V$, we have the linear superspace $\Pi V$ with $(\Pi V)_{\ev} = V_{\od}$ and $(\Pi V)_{\od} = V_{\ev}$. 

\item 
A $\bbZ_2$-graded algebra $A$ is called a \emph{superalgebra}, and a linear superspace equipped with a left (resp.\ right) $\bbZ_2$-graded $A$-module structure is called a \emph{left} (resp.\ \emph{right}) \emph{$A$-supermodule}. A \emph{morphism $M \to N$ of left $A$-supermodules $M$ and $N$} means a linear map consistent with the supermodule structures of $M$ and $N$. The linear superspace of all morphisms of left $A$-supermodules is denote by
\[
 \Hom_A(M,N) = \Hom_A(M,N)_{\ev} \oplus \Hom_A(M,N)_{\od}.
\]
Thus, left $A$-supermodules form a supercategory whose morphism superspace is given by $\Hom_A(\cdot,\cdot)$.

\item
A superalgebra $A$ is called \emph{commutative} if the multiplication satisfies $x y=(-1)^{p(x) p(y)} y x$ for any pure $x,y \in A$.
\end{itemize}

Next we give some symbols for symmetric groups and their modules.
\begin{itemize}
\item
For $n \in \bbN$, we denote by $\frS_n$ the $n$-th symmetric group with the convention $\frS_0 \ceq \{e\} = \frS_1$. We consider the group algebra $\bbK[\frS_n]$ as a purely even superalgebra. 

\item 
For $m,n \in \bbN$ and $\nu=(n_1,\ldots,n_m) \in \bbN^m_n$ (see \eqref{eq:0:Nkn}), we denote 
\[
 \frS_\nu \ceq \frS_{n_1}\times \cdots \times \frS_{n_m},
\]
which is regarded as a subgroup of $\frS_n$. 

\item
Let $V$ be a linear superspace. For $n \in \bbN$, the superspace $V^{\otimes n}$ carries a structure of a left $\frS_n$-supermodule by letting $\sigma \in \frS_n$ act on $v_1 \otimes \dotsb \otimes v_n \in V^{\otimes n}$ by
\begin{align}\label{eq:1:Sn-Vn}
 \sigma(v_1\otimes \cdots\otimes v_n) \ceq 
 \prod_{\substack{i<j \\ \sigma(i)>\sigma(j)}} (-1)^{p(v_i)p(v_j)} 
 \cdot v_{\sigma^{-1}(1)}\otimes \cdots\otimes v_{\sigma^{-1}(n)}.
\end{align}
In what follows, we always consider $V^{\otimes n}$ with this left $\frS_n$-supermodule structure unless otherwise specified. 
\end{itemize}

Now we introduce the notion of $\frS$-modules in the super setting. See \cite[\S5.1]{LV} for the detail in the non-super case.
\begin{itemize}
\item
An \emph{$\frS$-supermodule} $M=\bigl(M(n)\bigr)_{n \in \bbN}$ is a collection of right $\frS_n$-supermodules $M(n)$. The index $n$ is called the \emph{arity}. A \emph{morphism $f\colon M \to N$ of $\frS$-supermodules} is a family of morphisms $f_n\colon M(n) \to N(n)$ of right $\frS_n$-supermodules. The resulting supercategory of $\frS$-supermodules is denoted by $\cSMod$.

\item
For $m,n \in \bbN$, $\nu=(n_1,\ldots,n_m)\in \bbN^m_n$, and an $\frS$-supermodule $M$, we define a right $\frS_\nu$-supermodule
\begin{align*}
 M(\nu)\ceq M(n_1)\otimes \cdots \otimes M(n_m). 
\end{align*}

\item
The supercategory $\cSMod$ has a monoidal structure whose tensor product $M \circ N = \bigl((M \circ N)(n)\bigr)_{n \in \bbN}$ is given by 
\begin{align}\label{eq:pre:op:circ}
 (M\circ N)(n) \ceq \bigoplus_{m \in \bbN} \Bigl(M(m) \otimes_{\bbK[\frS_m]}
\Bigl(\bigoplus_{\nu\in\bbN^m_n}N(\nu)\otimes_{\bbK[\frS_\nu]}\bbK[\frS_n]\Bigr)\Bigr)
\end{align}
and whose unit object is $I \ceq (\delta_{n,1}\bbK)_{n \in \bbN}=(0,\bbK,0,0,\dotsc)$.
\end{itemize}

In the non-super setting \cite{LV}, an operad is defined to be a monoid object in the monoidal category $(\cSMod,\circ,I)$, i.e., it consists of an $\frS$-module $\oP=\bigl(\oP(n)\bigr)_{n \in \bbN}$ and two morphisms $\gamma\colon \oP \circ \oP \to \oP$ and $\eta\colon I \to \oP$ of $\frS$-modules, which should satisfy the standard axioms of monoids. Unraveling the axioms in the super setting, we obtain the following definition. Hereafter we use the symbol $[k] \ceq \{1,2,\dotsc,k\}$ for $k \in \bbN$ with the convention $[0] \ceq \emptyset$.

\begin{dfn}[{\cite[\S3.1]{BDHK}}]\label{dfn:1:op}
Consider a triple $(\oP,\gamma,1)$ consisting of:
\begin{itemize}
\item 
An $\frS$-supermodule $\oP$, i.e, a map $n \mto \oP(n)$ associating a right $\frS_n$-supermodule to each $n\in\bbN$. 

\item 
A family $\gamma$ of even linear maps 
\begin{align*}
 \gamma_\nu\colon \oP(m) \otimes \oP(\nu) \lto \oP(n) \quad (m,n \in \bbN, \, \nu \in \bbN^m_n)
\end{align*}
satisfying
\begin{align}\label{eq:1:gamma_nu}
 \gamma_\nu(f^\sigma\otimes g_1^{\tau_1}\otimes \cdots \otimes g_m^{\tau_m})
=\gamma_{\sigma\nu}(f\otimes \sigma(g_1\otimes \cdots \otimes g_m))^{\sigma_\nu(\tau_1, \ldots, \tau_m)}
\end{align} 
for $\sigma\in\frS_m$, $(\tau_1, \ldots, \tau_m)\in\frS_\nu$ and $f\in\oP(m)$, $g_1\otimes \cdots \otimes g_m\in\oP(\nu)$. 
Here $f^\sigma$ denotes the right action of $\sigma\in\frS_m$ on $f \in \oP(m)$. 
Also, for $\nu=(n_1, \ldots, n_m)\in\bbN^m_n$, we set $\sigma\nu\ceq (n_{\sigma^{-1}(1)}, \ldots, n_{\sigma^{-1}(m)})$, and define $\sigma_\nu\in\frS_n$ by 
\begin{align*}
 \sigma_\nu(k) \ceq n_{\sigma^{-1}(1)}+\dotsb+n_{\sigma^{-1}(\sigma(i)-1)}+j
\end{align*}
for each $k\in[n]$ written uniquely as $k=n_1+\cdots n_{i-1}+j$ with $i \in [m]$ and $j \in [n_i]$. 
For simplicity, we denote 
\[
 f \circ (g_1 \odot \cdots \odot g_m) \ceq \gamma_{\nu}(f \otimes g_1 \otimes \dotsb \otimes g_m)
\]
for $f \in \oP(m)$ and $g_1 \otimes \cdots \otimes g_m \in \oP(\nu)$.

\item 
An even element $1 \in \oP(1)_{\ol{0}}$.  
\end{itemize}

Such a triple $(\oP,\gamma,1)$ is called a \emph{superoperad} if the following conditions are satisfied.
\begin{clist}
\item
Let us given any $l, m, n\in \bbN$, $\mu=(m_1,\dotsc,m_l) \in \bbN^l_m$ and $\nu\in\bbN^m_n$.
For any $f\in\oP(l)$, $g_1\otimes \cdots \otimes g_l\in\oP(\mu)$ and $h_1\otimes \cdots \otimes h_m\in\oP(\nu)$, we have 
\begin{align*}
&\bigl(f\circ (g_1\odot \cdots \odot g_l)\bigr)\circ (h_1\odot \cdots \odot h_m)\\
&=\pm f\circ \bigl((g_1\circ (h_1\odot\cdots \odot h_{M_1}))\odot \cdots \odot (g_l\circ (h_{M_{l-1}+1}\odot \cdots \odot h_{M_l}))\bigr). 
\end{align*}
Here we set $M_i\ceq m_1+\cdots +m_i$ for each $i\in[l]$ and 
\begin{align*}
\pm\ceq \prod_{1\le i<j\le l}\prod_{k=M_{i-1}+1}^{M_i}(-1)^{p(g_j)p(h_k)}. 
\end{align*}

\item 
For $n\in\bbN$ and $f\in \oP(n)$, we have $f\circ (1\odot \cdots \odot 1)=1\circ f=f$. 
\end{clist}
\end{dfn}

Some additional terminology is in order.
\begin{itemize}
\item 
For a superoperad $(\oP,\gamma=\{\gamma_\nu\}_{\nu},1)$, the linear maps $\gamma_\nu$ are called the \emph{composition maps} and $1$ is called the \emph{unit}. We will often say that $\oP$ is a superoperad, without mentioning $\gamma$ and $1$. 

\item
A superoperad $\oP$ is called \emph{even} if the linear superspace $\oP(n)$ is even for each $n \in \bbN$ and all the composition maps $\gamma_\nu$ are linear. An even superoperad is also called an \emph{operad} or a \emph{non-super operad}.

\item
Let $\oP$ be a superoperad. For each $m,n \in \bbN$ and $i \in [m]$, the linear map 
\begin{align}\label{eq:1:circ_i}
 \circ_i\colon \oP(m)\otimes \oP(n) \lto \oP(m+n-1), \quad 
 f\circ_ig \ceq f\circ (1\odot \cdots \odot 1 \odot \overset{i}{\check{g}}\odot 1\odot \cdots \odot 1) 
\end{align}
is called the \emph{infinitesimal composition}.
Note that the infinitesimal compositions are even.
\end{itemize}

Next we recall the notion of algebras over an operad in the super setting. 
\begin{itemize}
\item 
A \emph{morphism $\alpha\colon \oP \to \oQ$ of superoperads} is a morphism of $\frS$-supermodules which is compatible with the monoid object structures. See \cite[\S5.2.1]{LV} for the detail in the non-super setting. Superoperads and their morphisms form a sub-supercategory $\cOp$ of $\cSMod$, which is called the \emph{supercategory of superoperads}. It has already appeared at \eqref{eq:0:WK-Pch} in the introduction. 
Also we denote by $\coOp$ the category of non-super operads and even morphisms between them, which has already appeared at \eqref{eq:0:Pch}. Thus, for non-super operads $\oP$ and $\oQ$, we have 
\begin{align}\label{eq:1:coOp-cOp}
 \Hom_{\coOp}(\oP,\oQ) = \Hom_{\cOp}(\oP,\oQ)_{\ev}.
\end{align}

\item 
For a linear superspace $V$, we denote by $\oHom_V$ the \emph{endomorphism superoperad}, which is a straightforward analogue of the endomorphism operad in the non-super setting \cite[\S5.2.11]{LV}. Explicitly, as an $\frS$-supermodule, it is given by 
\begin{align}\label{eq:1:HomV}
 \oHom_V(n) \ceq \Hom(V^{\otimes n},V)
\end{align}
with right $\frS_n$-module structure induced by the left $\frS_n$-action on $V^{\otimes n}$ by permuting tensor factors \eqref{eq:1:Sn-Vn}. The composition map $\gamma=\{\gamma_\nu\}_\nu$ is given naturally by the composition of linear maps. See \cite[\S5.2.11]{LV} for the detail.

\item
For a superoperad $\oP$ and a linear superspace $V$, a \emph{$\oP$-algebra structure on $V$} 
is a morphism $\alpha\colon \oP \to \oHom_V$ of superoperads\footnote{Here we can restrict $\alpha$ to be even, as explained in the classical examples $\oCom$, $\oAss$ and $\oLie$. We include odd morphisms to establish the main bijection \eqref{eq:0:WK-Pch} in such a way that the convention of our SUSY chiral operad $\oPchB{}$ is compatible with the chiral operad $\oPch{}$ in \cite{BDHK}, as explained in the footnote to \eqref{eq:0:WK-Pch}.}.
See \cite[\S5.2.3, Proposition 5.2.2]{LV} for another equivalent definition.
Such a pair $(V,\alpha)$ will be called a \emph{$\oP$-algebra}.
\end{itemize}

Let us recall the formulation of classical algebraic structures in terms of the operad theory.
\begin{itemize}
\item
Let $\oCom$ be the \emph{commutative operad} \cite[\S 5.2.10, \S 13.1]{LV}. We have $\oCom(n)=\bbK$, the trivial representation of $\frS_n$, for each $n \in \bbN$. For a linear superspace $V$, a $\oCom$-algebra $(V,\alpha)$ with even $\alpha\colon \oCom \to \oHom_V$ is nothing but a commutative $\bbK$-superalgebra structure on $V$ whose multiplication $\mu\colon V^{\otimes 2}\to V$ is given by the image of $1_\bbK \in \bbK=\oCom(2)$ under $\alpha_2\colon \oCom(2) \to \oHom_V(2)=\Hom(V^{\otimes 2},V)$, i.e., $\mu=\alpha_2(1_\bbK)$. Note that the evenness of $\alpha$ guarantees the parity-preserving property of the multiplication $\mu$.

\item 
Let $\oAss$ be the \emph{associative operad} \cite[\S 5.2.10, Chap.\ 9]{LV}. We have $\oAss(n)=\bbK[\frS_n]$, the regular representation of $\frS_n$. An $\oAss$-algebra structure $\alpha$ on a linear superspace $V$ is determined by the image of $e \in \oAss(2)=\bbK[\frS_2]=\bbK e+\bbK(1,2)$. If $\alpha$ is even, then the element $\mu=\alpha_2(e) \in \oHom_V(2)=\Hom(V^{\otimes 2},V)$ is nothing but the multiplication in the corresponding associative $\bbK$-superalgebra structure on $V$. Conversely, the $\bbK$-superalgebra structure on $V$ corresponds to an even morphism $\alpha \in \Hom_{\cOp}(\oAss,\oHom_V)_{\ev}$.

\item
The Lie algebra structure is encoded by the \emph{Lie operad} $\oLie$ \cite[\S13.2]{LV}. To describe its structure, it is convenient to introduce the notion of \emph{quadratic operad}, and we leave the explanation to the next \cref{ss:1:Kos}.
\end{itemize}

\subsection{Koszul operad theory and convolution Lie superalgebra}\label{ss:1:Kos}

The operads $\oCom$ and $\oAss$ are \emph{quadratic operads}, for which one can develop the cohomology theory and the deformation theory using the \emph{operadic Koszul duality}.
Let us give a brief recollection.  See \cite{GK} and \cite[Chap.\ 7]{LV} for the detail. 
\begin{itemize}
\item
For an $\frS$-supermodule $M$, we have the \emph{free operad} $\oT(M)$ which is a superoperad equipped with an $\frS$-supermodule morphism $\eta(M)\colon M \to \oT(M)$ having a universal property \cite[\S5.5.1]{LV}. The free superoperad $\oT(M)$ is endowed with the \emph{weight grading}, denoted as $\oT(M)=\bigoplus_{r \in \bbN} \oT(M)^{(r)}$ \cite[\S5.5.3]{LV}. 

\item
An operadic quadratic data is a pair $(E,R)$ of an $\frS$-supermodule $E$ and 
a sub-$\frS$-supermodule $R \subset \oT(E)^{(2)}$.
The quadratic superoperad associated to an operadic quadratic data $(E,R)$ is a superoperad defined as $\oP(E,R) \ceq \oT(E)/(R)$, where $(R) \subset \oT(E)$ is the operadic ideal generated by $R$. 
It inherits the weight grading $\oP(E,R)=\bigoplus_{r \in \bbN}\oP(E,R)^{(r)}$.
See \cite[\S7.1]{LV} for the detail. Below we consider only the \emph{quadratic operad} $\oP(E,R)$, i.e., the above $E$ is even, so that the superoperads $\oT(E)$ and $\oP(E,R)$ are even.

\item
A \emph{binary quadratic operad} is a quadratic operad $\oP(E,R)$ whose $\frS$-supermodule $E$ concentrates in the 2-arity. 
\end{itemize}
The operads $\oCom$ and $\oAss$ are binary quadratic operads, 
and so is the operad $\oLie$ recalled below.
\begin{itemize}
\item
Let $\oLie$ be the \emph{Lie operad} \cite[\S 13.2]{LV}.
It is a binary quadratic operad $\oLie=\oT(\bbK c)/(R)$ with $R \ceq (c \circ_1 c)+(c \circ_1 c)^{(123)}+(c \circ_1 c)^{(132)} \subset \oT(\bbK c)^{(2)}$, where we used the infinitesimal composition $\circ_1$ in \eqref{eq:1:circ_i} and denoted the right action of the elements $(123),(132) \in \frS_3$ by superscript (see the explanation after \eqref{eq:1:gamma_nu}). 
We have $\oLie(2)=\bbK c$, and the generator $c$ corresponds to the structure of a Lie algebra. More precisely, for a linear superspace $V$, a $\oLie$-algebra $(V,\alpha)$ with even $\alpha$ is nothing but a Lie superalgebra structure on $V$ whose Lie bracket is given by $\alpha_2(c) \in \oHom_V(2)=\Hom(V^{\otimes 2},V)$.
\end{itemize}

Next, we give an overview of the deformation theory of algebraic structures associated with quadratic operads.
We refer to \cite[Chap.\ 12]{LV} for the detail. We begin with the notation for graded categories.
\begin{itemize}
\item 
In the following, we need $\bbZ$-graded objects \cite[\S6.2, \S6.3]{LV}, and everything in $\cLin$ which appeared so far is replaced by one in $\cgLin$, the ($\bbZ_2$,$\bbZ$)-bigraded category of $\bbZ$-graded linear superspaces $V=\bigoplus_{i \in \bbZ}V_i$ where each $V_i$ is a linear superspace. We call the additional $\bbZ$-grading $i$ the \emph{degree}. The morphism set $\Hom(V,W)$ for $V,W \in \cgLin$ is given by $\Hom(V,W)=\bigoplus_{i \in \bbZ}\Hom(V,W)_i$, where each $\Hom(V,W)_i$ is the linear superspace consisting of linear maps $f\colon V \to W$ satisfying $f(V_j) \subset W_{i+j}$ for each $j \in \bbZ$. In the following, we will sometimes omit the word ``$\bbZ$-'' and simply refer to $\bbZ$-graded objects as graded objects.

\item
We denote by $s$ the degree shift in $\cgLin$. So, for a graded linear superspace $V = \bigoplus_{i \in \bbZ} V_i$, we have $(s V)_i=V_{i-1}$. An element $f \in \Hom(V,W)_i$ is called a linear map of degree $i$.

\item
For example, a graded $\frS$-supermodule is a collection $M=(M(n))_{n \in \bbN}$ of $\bbZ$-graded right $\frS_n$-modules $M(n)=\bigoplus_{i \in \bbZ} M_i(n)$, and the index $i$ denotes the degree. We also have $(s M)_i(n)=M_{i-1}(n)$. 
\end{itemize}

We now recall the Koszul dual cooperads and the convolution Lie superalgebras.
\begin{itemize}
\item
We use the notion of \emph{cooperad} $\oC=(\oC,\Delta,\ve)$ \cite[\S5.8]{LV}. In particular, for a ($\bbZ$-graded) $\frS$-(super)module $E$, we have the cofree cooperad $\oT^c(E)$ \cite[\S5.8.6]{LV}.

\item
Let $\oP=\oP(E,R)$ be a quadratic operad ($E$ is even). Then we have the \emph{Koszul dual cooperad} 
\begin{align}\label{eq:1:Kdcop}
 \oP^{\ash} \ceq \oC(s E,s^2R).
\end{align} 
It is a sub-cooperad of the cofree cooperad $\oT^c(s E)$ which is universal among the sub-cooperad $\oC$ such that the composite $\oC \to \oT^c(s E) \to \oT^c(s E)^{(2)}/(s^2 R)$ is zero, and also has the induced weight grading $\oP^{\ash}=\bigoplus_{r \in \bbN} (\oP^{\ash})^{(r)}$ from the weight grading of $\oP$. See \cite[\S7.1]{LV} for the detail of quadratic operads and Koszul dual cooperads.

\item
For a $\bbZ$-graded cooperad $\oC=(\oC,\Delta,\ve)$ and a $\bbZ$-graded superoperad $\oP=(\oP,\gamma,\eta)$, we set
\[
 \Hom_{\frS}(\oC,\oP) \ceq \prod_{n \ge 0} \Hom_{\frS_n}\bigl(\oC(n),\oP(n)\bigr),
\]
where $\Hom_{\frS_n}\bigl(\oC(n),\oP(n)\bigr) \subset \Hom\bigl(\oC(n),\oP(n)\bigr)$ denotes the $\bbZ$-graded  linear superspace of graded morphisms of graded $\frS_n$-supermodules, i.e., $\frS_n$-equivariant $(\bbZ_2,\bbZ)$-graded linear maps. Using the infinitesimal composition $\circ_{(1)}$ given in \eqref{eq:1:circ_i} and the infinitesimal operations $\gamma_{(1)}\colon \oP \circ_{(1)} \oP \to \oP$ and $\Delta_{(1)}\colon \oC \to \oC \circ_{(1)} \oC$ (see \cite[\S6.1]{LV} for the detail), we define the composite 
\begin{align}\label{eq:1:sq}
 f \square g \ceq \gamma_{(1)} \circ (f \circ_{(1)} g) \circ \Delta_{(1)}
\end{align}
for $f,g \in \Hom_{\frS_n}\bigl(\oC(n),\oP(n)\bigr)$, which is again in $\Hom_{\frS_n}\bigl(\oC(n),\oP(n)\bigr)$. Here we used the symbol $\square$ in \cite{BDHK}, instead of $\star$ in \cite{LV}.
This operation $\square$ is a graded pre-Lie product \cite[Propositions 6.4.3, 6.4.5]{LV}, and hence we have a graded Lie superalgebra 
\begin{align}\label{eq:1:cvLie}
 \bigl(\Hom_{\frS}(\oC,\oP),[\cdot,\cdot]\bigr)
\end{align}
with $[f,g] \ceq f \square g-(-1)^{\abs{f}\abs{g}}g \square f$, where $\abs{f}$ denotes the $\bbZ$-grading of the morphism $f$. This graded Lie superalgebra is called the \emph{convolution Lie superalgebra}.
\end{itemize}

We have several applications of the construction of convolution Lie superalgebra \eqref{eq:1:cvLie}.
\begin{itemize}
\item 
Let $\oP=\oP(E,R)$ be a quadratic operad which is \emph{homogeneous}, i.e., $R \subset \oT(E)^{(2)}$, and $\oP^{\ash}$ be the corresponding Koszul dual cooperad \eqref{eq:1:Kdcop}. Let $V$ be a linear superspace and $\oHom_V$ be the endomorphism superoperad. Applying the previous argument to the case $\oC=\oP^{\ash}$ and $\oP=\oHom_V$, we have the convolution Lie superalgebra
\begin{align}\label{eq:1:gPV}
 \frg_{\oP,V} \ceq \bigl(\Hom_{\frS}(\oP^{\ash},\oHom_V),[\cdot,\cdot]\bigr).
\end{align}
It has an $\bbN$-grading $\frg_{\oP,V} = \prod_{r \in \bbN} \frg^r_{\oP,V}$
induced by the weight grading on $\oP^{\ash}$. Explicitly, we have 
\[
 \frg_{\oP,V}^0 = \Hom_{\frS}(I,\oHom_V) \cong \End(V), \quad 
 \frg_{\oP,V}^r = \Hom_{\frS}\bigl((\oP^{\ash})^{(r)},\oHom_V) \quad (r \ge 1),
\]
where $I$ denotes the unit object of the monoidal supercategory $\cSMod$ (see the line after \eqref{eq:pre:op:circ}). Then, by \cite[Proposition 10.1.4]{LV}, we have a bijection
\begin{align}\label{eq:1:MCV}
\begin{split}
 \{\text{$\oP$-algebra structures on $V$}\} &= \Hom_{\cOp}(\oP,\oHom_V) \\
 &\lsto \MC\bigl(\frg_{\oP,V}\bigr) \ceq 
 \{X \in \frg_{\oP,V}^1 \mid \tfrac{1}{2}[X,X]=0\}.
\end{split}
\end{align}
The set $\MC(\frg_{\oP,V})$ is called the \emph{solutions of the Maurer-Cartan equation} (Maurer-Cartan solutions for brevity) of the (differential) graded Lie superalgebra $\frg_{\oP,V}$ (with trivial differential).

\item
Let us replace the endomorphism superoperad $\oHom_V$ in the previous item by an arbitrary superoperad $\oQ$.
Then we obtain the convolution Lie superalgebra 
\begin{align}\label{eq:1:gPQ}
 \frg(\oP,\oQ) \ceq \bigl(\Hom_{\frS}(\oP^{\ash},\oQ),[\cdot,\cdot]\bigr)
\end{align}
whose $\bbN$-graded structure is given by 
\[
 \frg(\oP,\oQ)^0 = \Hom_{\frS}(I,\oQ), \quad 
 \frg(\oP,\oQ)^r = \Hom_{\frS}\bigl((\oP^{\ash})^{(r)},\oQ) \quad (r \ge 1).
\]
In particular, the previous algebra \eqref{eq:1:gPV} is recovered as $\frg_{\oP,V}=\frg(\oP,\oHom_V)$. The argument in \cite[Proposition 10.1.4]{LV} works for general $\oQ$ as well, and we have a bijection
\begin{align}\label{eq:1:MCPQ}
 \Hom_{\cOp}(\oP,\oQ) \lsto \MC\big(\frg(\oP,\oQ)\bigr) \ceq 
 \{X \in \frg(\oP,\oQ)^1 \mid \tfrac{1}{2}[X,X]=0\}.
\end{align}
Let us call a superoperad morphism $\oP \to \oQ$ a \emph{$\oP$-algebra structure on $\oQ$}. Then the above bijection says that a $\oP$-algebra structure on $\oQ$ is in one-to-one correspondence with a Maurer-Cartan solution $X \in \frg(\oP,\oQ)^1$, $[X,X]=0$.
\end{itemize}

\subsection{Lie superalgebra associated to a superoperad}\label{ss:1:LQ}

We are interested in the convolution Lie superalgebra \eqref{eq:1:gPQ} with $\oP=\oLie$, i.e., 
\begin{align}\label{eq:1:frg}
 \frg(\oLie,\oQ) = \bigl(\Hom_{\frS}(\oLie^!,\oQ),[\cdot,\cdot]\bigr), \quad 
 [f,g] \ceq f \square g - (-1)^{\abs{f} \abs{g}}g \square f
\end{align}
for a superoperad $\oQ$. By \eqref{eq:1:MCPQ}, 
this graded Lie superalgebra concerns $\oLie$-algebra structures in $\oQ$: 
\begin{align}\label{eq:1:MCQ}
 \{\text{$\oLie$-algebra structures on $\oQ$}\} \lsto \MC\big(\frg(\oLie,\oQ)\bigr) = 
 \{X \in \frg(\oLie,\oQ)^1 \mid \tfrac{1}{2}[X,X]=0\}.
\end{align}

Let us write down the graded component $\frg(\oLie,\oQ)^n$ for each $n \in \bbN$. Since the homogeneous quadratic operad $\oP=\oLie$ is binary, the argument in \cite[Proposition 12.1.1]{LV} works, and we have
\[
 \frg\bigl(\oLie,\oQ\bigr)^n \cong \Hom_{\frS_n}\bigl(\oLie^{\ash}(n+1),\oQ(n+1)\bigr)
\]
as linear superspaces.
Next, by \cite[\S7,6]{LV}, for any homogeneous quadratic binary operad $\oP$, each arity-graded component $\oP^{\ash}(n)$ of the Koszul dual cooperad $\oP^{\ash}$ satisfies
\[
 \oP^{\ash}(n) \cong \oP^!(n) \otimes_{\frS_n} \sgn_n
\]
as an $\frS_n$-module, where $\sgn_n$ denotes the signature representation of $\frS_n$, and $\oP^!$ is the \emph{Koszul dual operad} of the quadratic operad $\oP$.
We also have $\oLie^! = \oCom$ by the Koszul duality \cite[\S 7.6]{LV}. Since $\oCom(n)$ is the trivial $\frS_n$-representation, we have 
\begin{align}\label{eq:1:frgn}
 \frg(\oLie,\oQ)^n \cong \bigl(\Pi \oQ(n+1)\bigr)^{\frS_{n+1}} \ceq 
 \{f \in \oQ(n+1) \mid \forall \sigma \in \frS_{n+1}, \ f^{\sigma} = \sgn(\sigma) f\}
\end{align}
as linear superspaces, where $\Pi$ is the parity shift functor, and $f^\sigma$ denotes the right action of $\sigma \in \frS_{n+1}$ on $f \in \oQ(n+1)$. 

Next, we give an explicit expression of the pre-Lie product $\square$ in $\frg(\oLie,\oQ)$. Unraveling the definition \eqref{eq:1:sq}, we have for $f \in \frg(\oLie,\oQ)^n$ and $g \in \frg(\oLie,\oQ)^m$ that 
\begin{align}\label{eq:1:square}
 f \square g \ceq \sum_{\sigma \in \frS_{m+1,n}} (f \circ_1 g)^{\sigma^{-1}},
\end{align}
where for $k,l \in \bbN$, the symbol $\frS_{k,l} \subset \frS_{k+l}$ denotes the subset of $(k,l)$-shuffles: 
\begin{align*}
 \frS_{k,l} \ceq \{\sigma \in \frS_{k+l} \mid  
 \sigma(1)<\dotsb<\sigma(k), \, \sigma(k+l)<\dotsb<\sigma(k+l)\}.
\end{align*}
The graded Lie superalgebra $\frg(\oLie,\oQ)$ is essentially the same as the \emph{universal Lie superalgebra associated to $\oQ$} in \cite[\S3.2]{BDHK}. The definition therein shifts the grading and compensates for the parity shift. It is given as follows:


\begin{dfn}\label{dfn:1:LP}
For a superoperad $\oQ$, we define a $\bbZ_{\ge-1}$-graded linear superspace
\begin{align*}
 L(\oQ) \ceq \bigoplus_{n \ge -1}L^n(\oQ), \quad 
 L^n(\oQ) \ceq \oQ(n+1)^{\frS_{n+1}} = 
 \{f\in\oQ(n+1) \mid \forall \sigma\in\frS_{n+1}, \, f^\sigma=f\}
\end{align*} 
and a linear map $\square\colon L(\oQ)\otimes L(\oQ)\to L(\oQ)$ by
\begin{align*}
 f\square g \ceq \sum_{\sigma \in \frS_{m+1, n}} (f\circ_1 g)^{\sigma^{-1}}
 \quad (f\in L^n(\oQ), \ g\in L^m(\oQ)).
\end{align*}
\end{dfn}

Let us summarize the arguments so far in the following form.

\begin{dfn}\label{dfn:1:Lie}
For a superoperad $\oQ$, a \emph{Lie algebra structure on $\oQ$} 
means a superoperad morphism $\oLie \to \oQ$.
\end{dfn}

\begin{prp}[{\cite[Prop.\ 10.1.4]{LV}, \cite{DK13}, \cite[Theorem 3.4]{BDHK}}]\label{prp:1:LP}
For a superoperad $\oQ$, the pair $(L(\oQ),\square)$ in \cref{dfn:1:LP} is a pre-Lie superalgebra (see \cite[\S1.4]{LV} for example). Hence we obtain a graded Lie superalgebra $(L(\oQ),[\cdot,\cdot])$ with 
\begin{align*}
 [f, g]\ceq f\square g-(-1)^{p(f)p(g)}g\square f. 
\end{align*}
Moreover, we have a bijection between Maurer-Cartan solutions and Lie algebra structures:
\[
 \MC\bigl(L(\oQ)\bigr) \ceq \{X \in L^1(\oQ) \mid X \square X = 0\} \lsto 
 \{\text{Lie algebra structures on $\oQ$}\}.
\]
\end{prp}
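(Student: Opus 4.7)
The plan is to identify $L(\oQ)$ with the convolution Lie superalgebra $\frg(\oLie,\oQ)$ from \eqref{eq:1:gPQ}, so that all three assertions of the proposition become specialisations of general facts recalled in \cref{ss:1:Kos}: the pre-Lie property of $\square$ is \cite[Propositions 6.4.3, 6.4.5]{LV}; the Lie bracket obtained by antisymmetrization is the standard pre-Lie to Lie passage from \cite[\S1.4]{LV}; and the bijection between $\{X \square X = 0\}$ and operad morphisms $\oLie \to \oQ$ is the Maurer--Cartan bijection \eqref{eq:1:MCPQ} specialised to $\oP = \oLie$.

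First I would establish, for each $n \ge 0$, an isomorphism of graded linear superspaces $L^n(\oQ) \cong \frg(\oLie,\oQ)^n$. From \eqref{eq:1:frgn} one has
\[
 \frg(\oLie,\oQ)^n \cong \{f \in \oQ(n+1) \mid f^\sigma = \sgn(\sigma)f \ \forall \sigma \in \frS_{n+1}\}
\]
with an overall parity shift originating from the identification $\oLie^{\ash}(n+1) \cong \oLie^!(n+1) \otimes \sgn_{n+1} = \sgn_{n+1}$, where we used $\oLie^! = \oCom$ and the triviality of $\oCom(n+1)$ as $\frS_{n+1}$-module. Absorbing the sign representation together with the parity shift $\Pi$ into the identification of elements turns the $\sgn$-equivariance $f^\sigma = \sgn(\sigma)f$ into the plain invariance $f^\sigma = f$ defining $L^n(\oQ)$. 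This is precisely the grading/parity-shift indicated after \cref{dfn:1:LP}. The extra component $L^{-1}(\oQ) = \oQ(0)$, which has no counterpart in $\frg(\oLie,\oQ)$, is handled separately: the shuffle formula of \cref{dfn:1:LP} is consistent in the degenerate cases and preserves the pre-Lie identity in higher degrees.

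Under this identification, the explicit shuffle formula \eqref{eq:1:square} for $\square$ on $\frg(\oLie,\oQ)$ coincides with the defining formula for $\square$ in \cref{dfn:1:LP}. Hence $(L(\oQ),\square)$ inherits the pre-Lie superalgebra structure, and its antisymmetrization yields the graded Lie superalgebra $(L(\oQ),[\cdot,\cdot])$ with bracket $[f,g] = f \square g - (-1)^{p(f) p(g)} g \square f$. For the Maurer--Cartan assertion, I would apply \eqref{eq:1:MCPQ} with $\oP = \oLie$ to obtain $\Hom_{\cOp}(\oLie,\oQ) \lsto \MC(\frg(\oLie,\oQ))$. Since an element $X \in L^1(\oQ)$ corresponds to an odd element of $\frg(\oLie,\oQ)^1$ under the identification (this is exactly the role of the parity shift), one has $[X,X] = 2 X \square X$, so the Maurer--Cartan equation $\tfrac12 [X,X] = 0$ reduces to $X \square X = 0$, giving the stated bijection.

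The main obstacle is the careful bookkeeping of signs and parities in the identification step: the sign representation $\sgn_{n+1}$ coming from the Koszul dual cooperad $\oLie^{\ash}(n+1)$, together with the degree/parity shifts inherent in the definition of $\frg$, must be absorbed consistently so as to replace the $\sgn$-equivariance of \eqref{eq:1:frgn} with the plain $\frS_{n+1}$-invariance of \cref{dfn:1:LP}, and so that the shuffle formulae truly agree sign-by-sign. Once this bookkeeping is settled, the proposition is essentially a translation of the general operadic theory into the conventions of \cite{BDHK}.
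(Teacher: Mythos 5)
Your proposal is correct and follows essentially the same route as the paper: the paper gives no separate proof of \cref{prp:1:LP}, but the entire discussion of \cref{ss:1:LQ} preceding it carries out exactly your identification $L^n(\oQ) \cong \frg(\oLie,\oQ)^n$ via $\oLie^! = \oCom$ and the absorption of $\sgn_{n+1}$ and the parity shift, after which the pre-Lie property and the Maurer--Cartan bijection are quoted from \cite[Propositions 6.4.3, 6.4.5, Proposition 10.1.4]{LV} as in \eqref{eq:1:MCPQ}.
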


\section{\texorpdfstring{$N_W=N$}{NW=N} SUSY chiral operad}\label{s:W}

We continue to work over the base field $\bbK$ of characteristic $0$ and to use the symbol $[k] \ceq \{1,2,\dotsc,k\}$ for $k \in \bbN$. We also fix a positive integer $N$. 

\subsection{Polynomial superalgebra}\label{ss:W:poly}

Here we give a summary of the polynomial ring of supervariables. 
Most of the material is standard, and we refer to \cite[\S3.1]{HK} for a detailed description.

\begin{dfn}\label{dfn:W:poly}
Let $A$ be a set and $\Lambda_\alpha=(\lambda_\alpha, \theta_\alpha^1, \ldots, \theta_\alpha^N)$ be a sequence of letters for each $\alpha\in A$. We denote by 
$\bbK[\Lambda_\alpha]_{\alpha \in A}$ the free commutative $\bbK$-superalgebra generated by even $\lambda_\alpha$ $(\alpha\in A)$ and odd $\theta_\alpha^i$ $(\alpha\in A, i\in [N])$, i.e, the $\bbK$-superalgebra generated by these elements with relations 
\begin{align}\label{eq:W:poly}
 \lambda_\alpha\lambda_\beta-\lambda_\beta\lambda_\alpha=0, \quad 
 \lambda_\alpha\theta_\beta^i-\theta_\beta^i\lambda_\alpha=0, \quad
 \theta_\alpha^i\theta_\beta^j+\theta_\beta^j\theta_\alpha^i=0 \quad
 (\alpha, \beta\in A, \ i, j\in[N]). 
\end{align}
Each $\Lambda_\alpha$ for $\alpha\in A$ is called a \emph{$(1|N)_W$-supervariable}, and the $\bbK$-superalgebra $\bbK[\Lambda_\alpha]_{\alpha \in A}$ is called the \emph{$N_W=N$ polynomial superalgebra} of the supervariables $(\Lambda_\alpha)_{\alpha\in A}$. 
\end{dfn}

In the case $A=[n]=\{1,\dotsc,n\}$ for $n \in \bbZ_{>0}$, we often denote the polynomial superalgebra by $\bbK[\Lambda_k]_{k=1}^n$ instead of $\bbK[\Lambda_k]_{k \in [n]}$. If $A$ consists of one element, then we suppress the subscript $\alpha \in A$ and denote the polynomial superalgebra by $\bbK[\Lambda]$.

For a $(1|N)_W$-supervariable $\Lambda=(\lambda, \theta^1, \ldots, \theta^N)$ and a subset $I=\{i_1<\cdots< i_r\}\subset [N]$, we denote $\theta^I\ceq \theta^{i_1}\cdots\theta^{i_r} \in \bbK[\Lambda]$. Also, we define $\sigma(I, J)\in\{0, \pm 1\}$ for $I, J\subset [N]$ by the relation $\theta^I\theta^J=\sigma(I, J)\theta^{I\cup J}$, and set $\sigma(I)\ceq \sigma(I, [N] \bs I)$. For $\alpha \in A$, $m\in\bbN$ and $I\subset [N]$, we denote $\Lambda_\alpha^{m|I}\ceq \lambda_\alpha^m\theta_\alpha^I \in \bbK[\Lambda_\alpha]_{\alpha \in A}$. 

For a linear superspace $V$ and $(1|N)_W$-supervariables $\Lambda_\alpha$ ($\alpha \in A$), 
we denote
\begin{align}\label{eq:W:VLam}
 V[\Lambda_\alpha]_{\alpha \in A} \ceq \bbK[\Lambda_\alpha]_{\alpha\in A} \otimes_{\bbK} V,
\end{align}
which is a (left) $\bbK[\Lambda_\alpha]_{\alpha\in A}$-supermodule. 

As a preliminary of the following subsections, let us introduce:

\begin{dfn}\label{dfn:W:clHW}
Let $\clH_W$ be the free commutative $\bbK$-superalgebra generated by even $T$ and odd $S^i$ ($i\in[N]$), i.e., the $\bbK$-superalgebra generated by these elements with relations
\begin{align}\label{eq:W:TSi}
 TS^i-S^iT=0, \quad S^iS^j+S^jS^i=0\quad (i, j\in[N]).
\end{align}
For simplicity, we set
\begin{align}\label{eq:W:nabla}
 \nabla \ceq (T, S^1, \ldots, S^N).
\end{align}
We also denote a linear superspace $V$ equipped with a left $\clH_W$-supermodule structure as 
\[
 (V,\nabla)=(V,T, S^1, \ldots, S^N),
\]
where $T$ is regarded as an even linear transformation on $V$ and $S^i$ as an odd linear transformation, satisfying the relations \eqref{eq:W:TSi}.
\end{dfn}

Note that $\clH_W$ is isomorphic to $\bbK[\Lambda]$ as a superalgebra by the homomorphism defined by 
\begin{align}\label{eq:W:TSLambda}
T \mto -\lambda,\quad S^i \mto -\theta^i\quad (i\in[N]). 
\end{align}
Since $\clH_W$ is a commutative superalgebra, we suppress the word `left' of an $\clH_W$-supermodule hereafter.

\subsection{$N_W=N$ SUSY Lie conformal operad}\label{ss:W:LCA}

In this subsection, we introduce a natural $N_W=N$ SUSY analogue of the superoperad $\oCh{}$ of Lie conformal algebra in \cite[\S5]{BDHK}. The main contents are \cref{dfn:W:Chom} and \cref{thm:W:LCA}.

Let us fix a $(1|N)_W$-supervariable $\Lambda=(\lambda, \theta^1, \ldots, \theta^N)$. 
Recall the polynomial superalgebra $\bbK[\Lambda]$ in \cref{dfn:W:poly} and the $\bbK[\Lambda]$-supermodule $V[\Lambda] = \bbK[\Lambda] \otimes V$ in \eqref{eq:W:VLam}.

\begin{dfn}[{\cite[Definition 3.2.2]{HK}}]\label{dfn:W:LCA}
Let $(V, \nabla)=(V, T, S^1, \ldots, S^N)$ be an $\clH_W$-supermodule and $[\cdot_\Lambda\cdot]\colon V\otimes V \to V[\Lambda]$ be a linear map of parity $\ol{N}$. A triple $(V,\nabla,[\cdot_\Lambda\cdot])$ is called an \emph{$N_W=N$ SUSY Lie conformal algebra} if it satisfies the following conditions: 
\begin{clist}
\item (\emph{sesquilinearity}) For any $a, b\in V$, 
\begin{align}\label{eq:LCA:sesq}
\begin{split}
&[Ta_\Lambda b]=-\lambda[a_\Lambda b], \hspace{43pt}
[a_\Lambda Tb]=(\lambda+T)[a_\Lambda b], \\
&[S^ia_\Lambda b]=-(-1)^{N}\theta^i[a_\Lambda b], \quad 
 [a_\Lambda S^ib]=(-1)^{p(a)+\ol{N}}(\theta^i+S^i)[a_\Lambda b]\quad (i\in[N]). 
\end{split}
\end{align} 

\item (\emph{skew-symmetry}) For any $a, b\in V$, 
\begin{align}\label{eq:LCA:ssym}
 [b_\Lambda a]=-(-1)^{p(a)p(b)+\ol{N}}[a_{-\Lambda-\nabla}b],
\end{align}
where we used $\nabla \ceq (T, S^1, \ldots, S^N)$ in \eqref{eq:W:nabla}. 

\item (\emph{Jacobi identity}) For any $a, b, c\in V$, 
\begin{align}\label{eq:LCA:Jac}
 [a_{\Lambda_1}[b_{\Lambda_2}c]]
=(-1)^{(p(a)+\ol{N})\ol{N}}[[a_{\Lambda_1}b]_{\Lambda_1+\Lambda_2}c]
+(-1)^{(p(a)+\ol{N})(p(b)+\ol{N})}[b_{\Lambda_2}[a_{\Lambda_1}c]], 
\end{align}
where $\Lambda_1$ and $\Lambda_2$ are $(1|N)_W$-supervariables. 
\end{clist}
The linear map $[\cdot_\Lambda\cdot]$ is called the \emph{$\Lambda$-bracket} of the $\clH_W$-supermodule $(V, \nabla)$. For simplicity, we say $(V, \nabla)$, or more simply $V$, is an $N_W=N$ SUSY Lie conformal algebra. 
\end{dfn}

\begin{rmk}
A few remarks on notation are in order.
\begin{enumerate}
\item 
The linear map $[\cdot_\Lambda \cdot]$ can be expressed as 
\[
 [a_\Lambda b] = \sum_{m \in \bbN, \, I \subset [N]} \Lambda^{m|I} c_{m,I}, \quad c_{m,I} \in V.
\]
Then the term $[a_{-\Lambda-\nabla}b]$ in \eqref{eq:LCA:ssym} is defined to be 
\[
 [a_{-\Lambda-\nabla}b] \ceq 
 \sum_{m \in \bbN, \, I \subset [N]} (-\Lambda-\nabla)^{m|I} c_{m,I}
\]
with $(-\Lambda-\nabla)^{m|I} \ceq (-\lambda-T)^m(-\theta^{i_1}-S^{i_1})\dotsm(-\theta^{i_r}-S^{i_r})$ for $I=\{i_1<\dotsc< i_r\} \subset [N]$.

\item
In \cref{eq:LCA:sesq} and \cref{eq:LCA:ssym}, the symbols $T$ and $S^i$ on the left of the $(1|N)_W$-supervariable $\Lambda$ are identified with the elements of $\bbK[\Lambda]$ by the isomorphism \cref{eq:W:TSLambda}. 
So in $V[\Lambda]$ we have the identities 
\begin{align*}
 T\Lambda^{m|I}v = \Lambda^{m|I}Tv, \quad S^i\Lambda^{m|I}v = (-1)^{\#I}\Lambda^{m|I}S^iv
\end{align*}
for $m\in\bbN$, $I\subset [N]$ and $v\in V$. Also, \cref{eq:LCA:Jac} is regarded as an identity in $V[\Lambda_k]_{k=1, 2}$. 
\end{enumerate}
\end{rmk}



In the remaining of this \cref{ss:W:LCA}, let $(V, \nabla)$ be an $\clH_W$-supermodule and $\Lambda_k=(\lambda_k, \theta_k^1, \ldots, \theta_k^N)$ be a $(1|N)_W$-supervariable for each $k\in\bbZ_{>0}$. 

Next, we turn to the definition of the superoperad $\oChW{V}$. 
The polynomial superalgebra $\bbK[\Lambda_k]_{k=1}^n$ of $(1|N)_W$-supervariables $\Lambda_1,\dotsc,\Lambda_n$ carries a structure of a $(\clH_W^{\otimes n}, \clH_W)$-superbimodule as follows: 
First, since any element of $\clH_W$ is expressed uniquely as $\varphi(\nabla)$ for some $\varphi(\Lambda) \in \bbK[\Lambda]$ using \eqref{eq:W:TSLambda}, the space $\bbK[\Lambda_k]_{k=1}^n$ has a left $\clH_W^{\otimes n}$-supermodule structure by letting $\varphi_1(\nabla) \otimes \cdots \otimes \varphi_n(\nabla) \in \clH_W^{\otimes n}$ act on $a(\Lambda_1, \ldots, \Lambda_n) \in \bbK[\Lambda_k]_{k=1}^n$ as 
\[
 \bigl(\varphi_1(\nabla) \otimes \cdots \otimes \varphi_n(\nabla)\bigr)a(\Lambda_1 \ldots, \Lambda_n)
 \ceq \varphi_1(-\Lambda_1) \cdots \varphi_n(-\Lambda_n)a(\Lambda_1, \ldots, \Lambda_n). 
\]
Also, the space $\bbK[\Lambda_k]_{k=1}^n$ is a right $\clH_W$-supermodule by letting 
\begin{align*}
 a(\Lambda_1,\ldots,\Lambda_n) \cdot T \ceq 
 a(\Lambda_1,\ldots,\Lambda_n)\Bigl(-\sum_{k=1}^n\lambda_k\Bigr), \quad
 a(\Lambda_1,\ldots,\Lambda_n) \cdot S^i \ceq 
 a(\Lambda_1,\ldots,\Lambda_n)\Bigl(-\sum_{k=1}^n\theta_k^i\Bigr)
\end{align*}
for $a(\Lambda_1,\ldots,\Lambda_n) \in \bbK[\Lambda_k]_{k=1}^n$. 
These left and right supermodule structures are consistent, and we have the desired superbimodule structure.

Thus, for an $\clH_W$-supermodule $V=(V,\nabla)$ and $n \in \bbZ_{>0}$, 
we obtain a left $\clH_W^{\otimes n}$-supermodule
\begin{align}\label{eq:LCA:HWn-mod}
 V_{\nabla}[\Lambda_k]_{k=1}^n \ceq \bbK[\Lambda_k]_{k=1}^n \otimes _{\clH_W} V. 
\end{align}
For $n=0$, we denote
\begin{align*}
 V_\nabla[\Lambda_k]_{k=1}^n \ceq V/\nabla V, 
\end{align*}
where $\nabla V\ceq TV+\sum_{i=1}^NS^iV\subset V$. 

By the definition of the right action of $\clH_W$ on $\bbK[\Lambda_k]_{k=1}^n$, we have:

\begin{lem}\label{lem:W:VnLn=VLn-1}
The linear superspace $V_{\nabla}[\Lambda_k]_{k=1}^n$ is isomorphic to $V[\Lambda_k]_{k=1}^{n-1}$ by the linear map 
\begin{align*}
 a(\Lambda_1,\dotsc,\Lambda_n) \otimes v \lmto 
 a(\Lambda_1,\dotsc,\Lambda_{n-1},-\Lambda_1-\cdots-\Lambda_{n-1}-\nabla)v
 \quad (a \in \bbK[\Lambda_k]_{k=1}^n, \, v \in V).
\end{align*}
In the case $n=1$, the statement is understand as $V_\nabla[\Lambda] \sto V$, $a(\Lambda) \otimes v \mto a(-\nabla)v$.
\end{lem}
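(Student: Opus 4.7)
The plan is to establish the isomorphism by exhibiting an explicit inverse. I would first define a map $\wt{\phi}\colon \bbK[\Lambda_k]_{k=1}^n \otimes_{\bbK} V \to V[\Lambda_k]_{k=1}^{n-1}$ by the substitution $a \otimes v \lmto a(\Lambda_1,\ldots,\Lambda_{n-1}, -\Lambda_1-\cdots-\Lambda_{n-1}-\nabla)v$, where $\nabla=(T,S^1,\ldots,S^N)$ is understood to act on $v$ via the $\clH_W$-supermodule structure. The candidate inverse $\psi\colon V[\Lambda_k]_{k=1}^{n-1} \to V_{\nabla}[\Lambda_k]_{k=1}^n$ is the composition of the natural inclusion $\bbK[\Lambda_k]_{k=1}^{n-1} \inj \bbK[\Lambda_k]_{k=1}^n$ (as polynomials not involving $\Lambda_n$) tensored with $\id_V$, followed by the canonical projection onto the balanced tensor product $V_{\nabla}[\Lambda_k]_{k=1}^n$.

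The key step is to verify that $\wt{\phi}$ descends through the $\clH_W$-balanced relations to a well-defined map $\phi\colon V_{\nabla}[\Lambda_k]_{k=1}^n \to V[\Lambda_k]_{k=1}^{n-1}$. By linearity and the algebra structure of $\clH_W$, this reduces to checking the identity $\wt{\phi}((a \cdot h) \otimes v) = \wt{\phi}(a \otimes hv)$ for the generators $h \in \{T, S^1,\ldots,S^N\}$. For $h = T$, the right action $a \cdot T$ multiplies $a$ by $-\sum_{k=1}^n\lambda_k$, and the substitution sends this factor to $-\sum_{k=1}^{n-1}\lambda_k - (-\sum_{k=1}^{n-1}\lambda_k - T) = T$; since $T$ is even and central in $\clH_W$, it commutes through the remaining polynomial factor to act directly on $v$, matching the right-hand side. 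The case $h = S^i$ is analogous in structure: the factor $-\sum_{k=1}^n\theta_k^i$ substitutes to $S^i$, and one verifies by careful Koszul-sign bookkeeping that supercommutativity of $\clH_W$ combined with the super structure of $\bbK[\Lambda_k]_{k=1}^n$ produces the identity $\wt{\phi}((a \cdot S^i) \otimes v) = \wt{\phi}(a \otimes S^i v)$.

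Granted $\phi$, the identity $\phi \circ \psi = \id$ is immediate, since polynomials independent of $\Lambda_n$ are fixed by the substitution. For the reverse composition, I would expand $\phi([a\otimes v]) = \sum_I p_I(\Lambda_1,\ldots,\Lambda_{n-1})\, q_I(\nabla) v$ with $p_I \in \bbK[\Lambda_k]_{k=1}^{n-1}$ and $q_I \in \clH_W$, apply $\psi$, then use the balanced relations in reverse to transport each $q_I(\nabla)$ back across the tensor, replacing it with $q_I(-\sum_k\lambda_k, -\sum_k\theta_k^i)$ acting on $p_I$. Since the substitution $T \mapsto -\sum_k\lambda_k$, $S^i \mapsto -\sum_k\theta_k^i$ converts the last argument $-\Lambda_1-\cdots-\Lambda_{n-1}-\nabla$ back into $\Lambda_n$, the resulting sum reassembles precisely to $[a(\Lambda_1,\ldots,\Lambda_n) \otimes v]$, confirming $\psi \circ \phi = \id$.

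The main obstacle will be the Koszul sign accounting when handling the odd generators $S^i$: both directions of the balanced relation must be tracked consistently with the supercommutativity of $\bbK[\Lambda_k]_{k=1}^n$ and $\clH_W$, and with the supermodule action on $V[\Lambda_k]_{k=1}^{n-1}$. The $n=1$ case falls out as a special instance: $\bbK[\Lambda]$ is free of rank one as a right $\clH_W$-module through the algebra isomorphism \eqref{eq:W:TSLambda}, so the map $\bbK[\Lambda] \otimes_{\clH_W} V \sto V$, $a(\Lambda) \otimes v \lmto a(-\nabla)v$, is an isomorphism by general tensor product considerations.
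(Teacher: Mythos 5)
Your proposal is correct and is exactly the argument the paper has in mind: the lemma is stated there without proof, introduced only by the remark that it follows from the definition of the right $\clH_W$-action on $\bbK[\Lambda_k]_{k=1}^n$, and your explicit substitution map, the well-definedness check on the generators $T,S^i$, and the inverse via inclusion-then-projection are the standard unwinding of that remark. The only point worth stressing in the write-up is the one you already flag, namely that the substitution $T\mto-\sum_k\lambda_k$, $S^i\mto-\sum_k\theta_k^i$ is a superalgebra homomorphism because the diagonal elements satisfy the same relations \eqref{eq:W:TSi}, which is what makes the sign bookkeeping for the odd generators go through.
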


The tensor product $V^{\otimes n}$ is naturally a left $\clH_W^{\otimes n}$-supermodule. Thus, the following definition makes sense, which is a SUSY analogue of the superoperad $\oCh{}$ in \cite[\S5.2]{BDHK}.

\begin{dfn}
For an $\clH_W$-supermodule $V=(V,\nabla)$ and $n \in \bbN$, we denote
\[
 \oChW{V}(n) \ceq \Hom_{\clH_W^{\otimes n}}(V^{\otimes n}, V_{\nabla}[\Lambda_k]_{k=1}^n).
\]
In other words, $\oChW{V}(n)$ is a linear sub-superspace of 
$\Hom_\bbK(V^{\otimes n}, V_{\nabla}[\Lambda_k]_{k=1}^n)$ spanned by elements $X$ such that
\begin{align*}
 X(\varphi v) = (-1)^{p(\varphi)p(X)}\varphi X(v) \quad 
 (\varphi \in \clH_{W}^{\otimes n}, \ v \in V^{\otimes n}). 
\end{align*}
To stress the supervariables $\Lambda_1,\ldots,\Lambda_n$, we express an element $X \in \oChW{V}(n)$ as $X_{\Lambda_1,\ldots,\Lambda_n}$. Note that we have 
\begin{align*}
&\oChW{V}(0) = \Hom_{\bbK}(\bbK,V/\nabla V) \cong V/\nabla V, \\
&\oChW{V}(1)=\Hom_{\clH_W}(V, V_\nabla[\Lambda])\cong \End_{\clH_W}V
\end{align*}
\end{dfn}

For $\sigma \in \frS_n$ and  $X \in \oChW{V}(n)$, we define a linear map $X^\sigma\colon V^{\otimes n} \to V_{\nabla}[\Lambda_k]_{k=1}^n$ by
\begin{align*}
 X^\sigma(v_1 \otimes \cdots \otimes v_n) \ceq 
 X_{\sigma(\Lambda_1,\ldots,\Lambda_n)}\bigl(\sigma(v_1 \otimes \cdots \otimes v_n)\bigr) 
 \quad (v_1,\ldots,v_n \in V),
\end{align*}
where $\sigma(\Lambda_1,\ldots,\Lambda_n) \ceq (\Lambda_{\sigma^{-1}(1)},\ldots,\Lambda_{\sigma^{-1}(n)})$ and $\sigma(v_1 \otimes \dotsb \otimes v_n) \ceq \pm v_{\sigma^{-1}(1)} \otimes \dotsb \otimes v_{\sigma^{-1}(n)}$ given in \eqref{eq:1:Sn-Vn}. 
Then one can check $\sigma(\varphi v) = (\sigma\varphi)(\sigma v)$ for $\varphi \in \clH_W^{\otimes n}$ and $v \in V^{\otimes n}$, by which one has $X^\sigma \in \oChW{V}(n)$. 
The linear superspace $\oChW{V}(n)$ carries a structure of a right $\frS_n$-supermodule by this action, and we have an $\frS$-supermodule $\oChW{V} \ceq \bigl(\oChW{V}(n)\bigr)_{n \in \bbN}$. 

\begin{dfn}
Let $m,n \in \bbN$, $(n_1,\ldots,n_m) \in \bbN^m_n$, and set $N_j \ceq n_1+\cdots+n_j$ for $j \in [m]$ and $N_0\ceq 0$. 
\begin{enumerate}
\item
For $Y_j\in\oChW{V}(n_i)$ with $j \in [m]$, we define a linear map
\begin{align*}
 Y_1 \odot \cdots \odot Y_m\colon V^{\otimes n} \lto 
 \bigotimes_{j=1}^m V_\nabla[\Lambda_k]_{k=N_{j-1}+1}^{N_j}
 \cong \bigotimes_{j=1}^m V[\Lambda_k]_{k=N_{j-1}+1}^{N_ij1}
\end{align*}
by  
\begin{align*}
 (Y_1 \odot \cdots \odot Y_m)(v_1 \otimes \ldots \otimes v_n) \ceq 
 \pm (Y_1)_{\Lambda_1,\ldots,\Lambda_{N_1}}(w_1) \otimes \cdots \otimes 
     (Y_m)_{\Lambda_{N_{m-1}+1},\ldots,\Lambda_{N_m}}(w_m)
\end{align*}
for $v_1,\ldots,v_n\in V$, where we used
\begin{align*}
\pm \ceq \prod_{1\le i< j\le m} (-1)^{p(w_i)p(Y_j)}, \quad 
 w_j \ceq v_{N_{j-1}+1} \otimes \cdots \otimes v_{N_j} \quad (j\in[m]). 
\end{align*}

\item
For $X\in\oChW{V}(m)$ and $Y_j\in\oChW{V}(n_j)$ with $j\in[m]$, we denote by 
\begin{align*}
 X \circ (Y_1 \odot \cdots \odot Y_m)\colon V^{\otimes n} \lto V_\nabla[\Lambda_k]_{k=1}^n
\end{align*}
the composition of linear maps
\[
 V^{\otimes n} \xrr{Y_1 \odot \cdots \odot Y_m} 
 \bigotimes_{j=1}^m V[\Lambda_k]_{k=N_{j-1}+1}^{N_j-1} 
 \xrr{X_{\Lambda_1', \ldots, \Lambda_m'}}  V_\nabla[\Lambda_k]_{k=1}^n. 
\]
Here we set $\Lambda_j' \ceq \Lambda_{N_{j-1}+1}+\cdots+\Lambda_{N_j}$. 
Also, the symbol $X_{\Lambda'_1, \ldots, \Lambda'_m}$ is the linear map defined by 
\begin{align*}
 a_1v_1 \otimes \cdots \otimes a_m v_m \lmto
 \pm (a_1 \cdots a_m)X_{\Lambda'_1,\ldots,\Lambda'_m}(v_1 \otimes \cdots \otimes v_m)
\end{align*}
for each $a_j\in\bbK[\Lambda_k]_{k=N_{j-1}+1}^{N_j-1}$ and $v_j\in V$ ($j\in[m]$) with the sign
\begin{align*}
 \pm \ceq \prod_{1\le i<j\le m} (-1)^{p(v_i)p(a_j)} \cdot \prod_{j=1}^m (-1)^{p(a_j)p(X)}. 
\end{align*}
\end{enumerate}
\end{dfn}

Let $m,n \in \bbN$ and $\nu \in \bbN^m_n$. For $X \in \oChW{V}(m)$ and $Y_1 \otimes \cdots \otimes Y_m \in \oChW{V}(\nu)$, a direct calculation shows that 
\begin{align*}
 X \circ (Y_1 \odot \cdots \odot Y_m) \in \oChW{V}(n). 
\end{align*}

Now recall \cref{dfn:1:op} of superoperads. The $\frS$-supermodules $\oChW{V}$ give rise to:

\begin{lem}
For an $\clH_W$-supermodule $V=(V,\nabla)$, the $\frS$-supermodule
\[
 \oChW{V} = \bigl(\oChW{V}(n)\bigr)_{n\in\bbN}
\]
is a superoperad by letting $X \otimes Y_1 \otimes \cdots \otimes Y_m \mto X \circ (Y_1 \odot \cdots \odot Y_m)$ be the composition map and $\id_V \in \oChW{V}(1)$ be the unit. 
\end{lem}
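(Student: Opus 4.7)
The plan is to verify the four axioms of \cref{dfn:1:op} one by one, treating this lemma as the SUSY analogue of the corresponding statement \cite[\S5]{BDHK} for the non-SUSY operad $\oCh{}$. Since the construction mirrors the non-SUSY case, the structural work is the same, and the new content is purely sign-bookkeeping dictated by the Koszul sign rule and the parity $\ol{N}$ of the supervariables.

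First I would check that the composition maps $\gamma_\nu\colon X \otimes Y_1 \otimes \cdots \otimes Y_m \mto X \circ (Y_1 \odot \cdots \odot Y_m)$ are well defined and even. The evenness is immediate once one verifies that the target lands in $\oChW{V}(n)$, which the statement just before the lemma asserts. The main check here is the $\clH_W^{\otimes n}$-linearity in the appropriate super sense: using the $(\clH_W^{\otimes n},\clH_W)$-superbimodule structure on $\bbK[\Lambda_k]_{k=1}^n$ and the rules $T\mto -\lambda$, $S^i \mto -\theta^i$ of \eqref{eq:W:TSLambda}, one pushes the action of $\varphi_1(\nabla)\otimes\cdots\otimes\varphi_n(\nabla)$ through the outer map $X_{\Lambda'_1,\ldots,\Lambda'_m}$ via the right $\clH_W$-action, exchanging it with the left $\clH_W^{\otimes n_j}$-action on each factor $V_\nabla[\Lambda_k]_{k=N_{j-1}+1}^{N_j}$. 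The signs produced at this step cancel those already built into the definition of $\odot$ and of the evaluation $X_{\Lambda'_1,\ldots,\Lambda'_m}$ of a polynomial argument.

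Next I would verify the equivariance identity \eqref{eq:1:gamma_nu}. Given $\sigma\in\frS_m$ and $\tau_j\in\frS_{n_j}$, I would unfold both sides on a pure tensor $v_1\otimes\cdots\otimes v_n$ and identify the $\Lambda$-variables via the prescribed permutation $\sigma_\nu(\tau_1,\ldots,\tau_m)$. The signs on the left come from applying $\tau_j$ to the block $w_j$ and applying $\sigma$ to $(Y_1,\ldots,Y_m)$; the signs on the right come from the $\frS_n$-action on $V^{\otimes n}$ as in \eqref{eq:1:Sn-Vn} together with the reorderings of the $Y_j$ hidden inside $\sigma(g_1\otimes\cdots\otimes g_m)$. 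A parity-counting argument using the Koszul rule matches the two sign contributions exactly, as in \cite[\S5]{BDHK}.

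Third I would establish associativity. Given $f\in\oChW{V}(l)$, $g_i\in\oChW{V}(m_i)$, $h_k\in\oChW{V}(n_k)$, one expands both sides as linear maps $V^{\otimes n}\to V_\nabla[\Lambda_k]_{k=1}^n$ and compares the resulting substitutions of sums $\Lambda'_j=\Lambda_{N_{j-1}+1}+\cdots+\Lambda_{N_j}$ inside $f$. The identity follows formally from the associativity of composition of $\bbK$-linear maps, while the sign $\pm=\prod_{i<j}\prod_{k=M_{i-1}+1}^{M_i}(-1)^{p(g_j)p(h_k)}$ arises by moving each $h_k$ past the $g_j$'s with $j>i$ using the Koszul rule, exactly matching the sign prescribed in (i) of \cref{dfn:1:op}. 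Finally, unitality is immediate: $f\circ(1\odot\cdots\odot 1)$ precomposes $f$ with $\id_{V^{\otimes m}}$ after the trivial identification $V_\nabla[\Lambda]\cong V$ of \cref{lem:W:VnLn=VLn-1}, and $1\circ f$ postcomposes $f$ with $\id_{V_\nabla[\Lambda_k]_{k=1}^n}$.

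The main obstacle is entirely the sign accounting: the odd supervariables $\theta^i$ and the odd maps $S^i$ mean that every step involves permuting odd tensor factors against each other as well as against the parities $p(X)$, $p(Y_j)$, $p(a_j)$, and one must confirm that the signs packaged into the definitions of $\odot$, of $X_{\Lambda'_1,\ldots,\Lambda'_m}$, and of the $\frS_n$-action on $V^{\otimes n}$ conspire to give precisely the signs demanded by \cref{dfn:1:op}. Once one commits to systematically applying the Koszul rule at each transposition, no further idea is needed beyond that already present in the non-SUSY proof of \cite[\S5]{BDHK}.
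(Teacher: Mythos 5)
Your proposal is correct and follows essentially the same route as the paper, which simply states that the non-SUSY proof of \cite[\S5.2]{BDHK} works with minor modifications and omits the details; your outline just makes explicit the axiom-by-axiom verification and the Koszul sign bookkeeping that those ``minor modifications'' consist of.
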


\begin{proof}
The proof in the non-SUSY case \cite[\S5.2]{BDHK} works with minor modifications. We omit the detail. 
\end{proof}

\begin{dfn}\label{dfn:W:Chom}
The superoperad $\oChW{V}$ is called the \emph{$N_W=N$ SUSY Lie conformal operad} of the $\clH_W$-supermodule $V=(V,\nabla)$.
\end{dfn}

For an $\clH_W$-supermodule $V=(V,\nabla)$ and $k \in \bbN$, we denote by $\Pi^k V=(\Pi^k V,\nabla)$ the $k$-times parity-shifted linear superspace $\Pi^k V$ equipped with the supermodule structure $\nabla=(T,S^1,\dotsc,S^N)$ over the commutative superalgebra $\clH_W$. Recall the set $\MC\bigl(L(\oQ)\bigr)$ of Maurer-Cartan solutions $X \square X = 0$, $X \in L^1(\oQ)$ in the graded Lie superalgebra $L(\oQ)$ associated to a superoperad $\oQ$ (see \cref{prp:1:LP}).  Now we can state the main claim of this \cref{ss:W:LCA}, which is a natural $N_W=N$ SUSY analogue of \cite[Proposition 5.1]{BDHK}.

\begin{thm}\label{thm:W:LCA}
Let $V=(V,\nabla)$ be an $\clH_W$-supermodule, i.e., a linear superspace $V$ endowed with $T \in \End_{\bbK}(V)_{\ev}$ and $S^i \in \End_{\bbK}(V)_{\od}$ ($i\in[N]$) satisfying \eqref{eq:W:TSi}. 
\begin{enumerate}
\item 
For an odd Maurer-Cartan solution $X\in \MC\bigl(L\bigl(\oChW{\Pi^{N+1}V}\bigr)\bigr)_{\od}$, define a linear map $[\cdot_\Lambda\cdot]_X\colon V\otimes V\to V[\Lambda]$ by 
\begin{align}\label{eq:LCAstr}
 [a_\Lambda b]_X \ceq 
 (-1)^{p(a)(\ol{N}+\od)}X_{\Lambda, -\Lambda-\nabla}(a \otimes b) \quad (a,b \in V).
\end{align}
Then $(V, \nabla, [\cdot_\Lambda\cdot]_X)$ is an $N_W=N$ SUSY Lie conformal algebra. 
\item
The map $X \mto [\cdot_\Lambda\cdot]_X$ gives a bijection 
\[
 \MC\bigl(L\bigl(\oChW{\Pi^{N+1}V}\bigr)\bigr)_{\od} \lsto 
 \{\text{$N_W=N$ SUSY Lie conformal algebra structures on $(V,\nabla)$}\}.
\]
\end{enumerate}
\end{thm}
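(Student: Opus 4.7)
The plan is to unwind both sides and translate the Maurer-Cartan equation $X \square X = 0$ into the three axioms of \cref{dfn:W:LCA}. Set $\oQ \ceq \oChW{\Pi^{N+1}V}$. By \cref{dfn:1:LP}, an odd element $X \in L^1(\oQ)_{\od}$ is an odd $\frS_2$-invariant element of $\oQ(2) = \Hom_{\clH_W^{\otimes 2}}\bigl((\Pi^{N+1}V)^{\otimes 2}, (\Pi^{N+1}V)_{\nabla}[\Lambda_1, \Lambda_2]\bigr)$. By \cref{lem:W:VnLn=VLn-1} the codomain is identified with $(\Pi^{N+1}V)[\Lambda]$ through the substitution $\Lambda_2 \mapsto -\Lambda - \nabla$ with $\Lambda \ceq \Lambda_1$, so $X$ transports to a map $V \otimes V \to V[\Lambda]$. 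A direct parity count, combining the odd parity of $X$ with the parity shifts introduced by $\Pi^{N+1}$, shows that this transported map has total parity $\ol{N}$, matching \cref{dfn:W:LCA}; the factor $(-1)^{p(a)(\ol{N}+\od)}$ in \eqref{eq:LCAstr} is precisely the Koszul reordering sign appearing when moving $X$ past $a$ under these parity shifts.

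For part (1), I check the three axioms separately. Sesquilinearity \eqref{eq:LCA:sesq} is a direct translation of the $\clH_W^{\otimes 2}$-linearity of $X$, using the $(\clH_W^{\otimes 2}, \clH_W)$-bimodule structure on $\bbK[\Lambda_1, \Lambda_2]$ introduced before \eqref{eq:LCA:HWn-mod}: left multiplication on the first tensor factor by $T, S^i$ corresponds to multiplication by $-\lambda, -\theta^i$ on $\Lambda$, while on the second factor, after the substitution $\Lambda_2 = -\Lambda - \nabla$ from \cref{lem:W:VnLn=VLn-1}, it becomes multiplication by $\lambda + T, \theta^i + S^i$. Skew-symmetry \eqref{eq:LCA:ssym} is the $\frS_2$-invariance $X^{(12)} = X$: the transposition swaps $\Lambda_1 \leftrightarrow \Lambda_2$, which after the projection becomes the substitution $\Lambda \mapsto -\Lambda - \nabla$, and the sign $-(-1)^{p(a)p(b)+\ol{N}}$ arises from combining the Koszul swap on $V \otimes V$, the parity shift $\Pi^{N+1}$ on both tensor factors, and the normalization in \eqref{eq:LCAstr}.

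For the Jacobi identity \eqref{eq:LCA:Jac}, I unfold $X \square X = 0$ using the formula \eqref{eq:1:square} summed over the three $(2,1)$-shuffles in $\frS_3$. Evaluated on $a \otimes b \otimes c$ and translated via the iterated version of \cref{lem:W:VnLn=VLn-1} for $\bbK[\Lambda_1, \Lambda_2, \Lambda_3]$ modulo $\Lambda_3 = -\Lambda_1 - \Lambda_2 - \nabla$, the three summands become precisely the three terms in \eqref{eq:LCA:Jac} with the prescribed signs. Part (2) then follows because the construction is reversible: given a $\Lambda$-bracket $[\cdot_\Lambda\cdot]$ satisfying the three axioms, one defines the corresponding $X \in \oQ(2)$ by inverting the identification \cref{lem:W:VnLn=VLn-1} together with the factor $(-1)^{p(a)(\ol{N}+\od)}$, and sesquilinearity gives the $\clH_W^{\otimes 2}$-linearity, skew-symmetry gives the $\frS_2$-invariance, and Jacobi gives $X \square X = 0$.

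The hard part will be the sign bookkeeping. The contributions of the odd parity of $X$, the $(N+1)$-fold parity shift on the input and output tensor factors, the Koszul rule implicit in the $\clH_W^{\otimes 2}$-linearity condition, and the right $\frS_{n+1}$-actions together with the shuffle sign in \eqref{eq:1:square}, must all conspire to reproduce the signs in \cref{dfn:W:LCA} exactly. In particular, the specific normalization $(-1)^{p(a)(\ol{N}+\od)}$ in \eqref{eq:LCAstr} has been chosen precisely so that no stray signs appear in sesquilinearity, and verifying this consistency will require a methodical adaptation of the non-SUSY argument of \cite[\S5]{BDHK} with the parity shift $\Pi^{N+1}$ in place of $\Pi$.
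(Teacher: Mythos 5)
Your proposal is correct and follows essentially the same route as the paper's proof: identify $L^1(\oChW{\Pi^{N+1}V})_{\od}$ with odd $\frS_2$-invariant elements of $\oChW{\Pi^{N+1}V}(2)$, match sesquilinearity with $\clH_W^{\otimes 2}$-equivariance, skew-symmetry with $\frS_2$-invariance, and the Jacobi identity with $X\square X=0$ expanded into its three shuffle terms $(X\circ_1X)+(X\circ_2X)+(X\circ_2X)^{(1,2)}$, with the bijection in (2) obtained by running the identifications in reverse. The sign bookkeeping you defer is exactly the content of the paper's displayed computations, and your normalization claims about $(-1)^{p(a)(\ol{N}+\od)}$ agree with them.
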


\begin{proof}
We denote $\wt{V} \ceq \Pi^{N+1}V=(\Pi^{N+1}V,\nabla)$ and $\oP \ceq \oChW{\Pi^{N+1}V}$ for simplicity, and let $p$ and $\wt{p}$ be the parity of $V$ and $\wt{V}$, respectively. Consider the linear map $[\cdot_\Lambda\cdot]_X\colon V \otimes V \to V[\Lambda]$ defined by \cref{eq:LCAstr} for an odd element $X \in \oP(2)_{\ol{1}}$.
We check the following two properties of $[\cdot_\Lambda\cdot]_X$.
\begin{itemize}
\item
The linear map $[\cdot_\Lambda\cdot]_X$ has parity $\ol{N}$: Since $X$ is an odd linear map $X\colon \wt{V}\otimes \wt{V} \to \wt{V}_{\nabla}[\Lambda_k]_{k=1,2}$, 
\begin{align*}
 p(X(a\otimes b)) = \wt{p}(X(a\otimes b))+\ol{N}+\od=\wt{p}(a)+\wt{p}(b)+\ol{N}=p(a)+p(b)+\ol{N}
\end{align*}
holds for $a,b \in V$. 

\item
The linear map $[\cdot_\Lambda\cdot]_X$ satisfies the sesquilinearity (i) in \cref{dfn:W:LCA}.
This can be checked by direct calculation using
\begin{align*}
 X(\varphi v) = (-1)^{\wt{p}(\varphi)}\varphi X(v) \quad 
 (\varphi \in \clH_{W}^{\otimes 2}, \, v \in \wt{V}^{\otimes 2}). 
\end{align*}
For instance, we can calculate 
\begin{align*}
 [a^i_\Lambda S^i b]
&=(-1)^{p(a)(\ol{N}+\od)}X_{\Lambda, -\Lambda-\nabla}(a\otimes S^ib)
 =(-1)^{p(a)(\ol{N}+\od)+\wt{p}(a)}
  X_{\Lambda, -\Lambda-\nabla}\bigl((\id_V \otimes S^i)(a\otimes b)\bigr) \\
&=(-1)^{p(a)(\ol{N}+\od)+\wt{p}(a)+1}(\theta^i+S^i)X_{\Lambda,-\Lambda-\nabla}(a\otimes b) \\
&=(-1)^{p(a)+\ol{N}}(\theta^i+S^i)[a_\Lambda b]_X \quad (a, b\in V).
\end{align*}
\end{itemize}

At this point, we see that the map $X \mto [\cdot_\Lambda\cdot]_X$ gives a bijective correspondence between the set $\oP(2)_{\od}$ and the set of all linear maps $[\cdot_\Lambda\cdot]: V\otimes V \to V[\Lambda]$ of parity $\ol{N}$ satisfying (i) in \cref{dfn:W:LCA}. 

Next, we study the conditions (ii) and (iii) in \cref{dfn:W:LCA} and show that they can be restated as the defining conditions of Maurer-Cartan solutions. 
\begin{itemize}
\item 
The linear map $[\cdot_\Lambda\cdot]_X$ satisfies the skew-symmetry \eqref{eq:LCA:ssym} 
((ii) in \cref{dfn:W:LCA})
$\siff$ $X^\sigma=X$ for any $\sigma\in\frS_2$:
The element $X$ satisfies $X^\sigma=X\ (\sigma\in\frS_2)$ if and only if
\begin{align*}
 (-1)^{\wt{p}(a)\wt{p}(b)}X_{\Lambda_2, \Lambda_1}(b\otimes a)
=X_{\Lambda_1, \Lambda_2}(a\otimes b)\quad(a, b\in V). 
\end{align*}
Since  
\begin{align*}
[b_\Lambda a]
&=(-1)^{p(b)(\ol{N}+\od)}X_{\Lambda, -\Lambda-\nabla}(b\otimes a), \\
[a_{-\Lambda-\nabla} b]
 &=(-1)^{p(a)(\ol{N}+\od)}X_{-\Lambda-\nabla, \Lambda}(a\otimes b) \quad
 (a, b\in V),
\end{align*}
the skew-symmetry is equivalent to $X^\sigma=X$ $(\sigma\in\frS_2)$. 
\end{itemize}

In what follows, let $X\in L^1(\oP)_{\od}$. Thus $[\cdot_\Lambda\cdot]_X\colon V\otimes V \to V_{\nabla}[\Lambda]$ is a linear map of parity $\ol{N}$ satisfying (i), (ii) in \cref{dfn:W:LCA}. 
We abbreviate $X$ in $[\cdot_\Lambda\cdot]_X$
\begin{itemize}
\item
The linear map $[\cdot_\Lambda\cdot]_X$ satisfies the Jacobi identity \eqref{eq:LCA:Jac}
((iii) in \cref{dfn:W:LCA}) $\siff$ $X\square X=0$: For $a, b, c\in V$, a direct calculation shows that
\begin{align*}
(X\circ_1X)(a\otimes b\otimes c)
&=X_{\Lambda_1+\Lambda_2, \Lambda_3}(X_{\Lambda_1, -\Lambda_1-\nabla}(a\otimes b)\otimes c)\\
&=(-1)^{p(b)(\ol{N}+\od)}[[a_{\Lambda_1}b]_{\Lambda_1+\Lambda_2}c], \\
(X\circ_2X)(a\otimes b\otimes c)
&=(-1)^{\wt{p}(a)}X_{\Lambda_1, \Lambda_2+\Lambda_3}(a\otimes X_{\Lambda_2, -\Lambda_2-\nabla}(b\otimes c))\\
&=(-1)^{\wt{p}(a)}(-1)^{p(a)(\ol{N}+\od)}(-1)^{p(b)(\ol{N}+\od)}[a_{\Lambda_1}[b_{\Lambda_2}c]]\\
(X\circ_2X)^{(1, 2)}(a\otimes b\otimes c)
&=(-1)^{\wt{p}(a)\wt{p}(b)}(X\circ_2X)_{\Lambda_2, \Lambda_1, \Lambda_3}(b\otimes a\otimes c)\\
&=(-1)^{\wt{p}(a)\wt{p}(b)}(-1)^{\wt{p}(b)}(-1)^{p(a)(\ol{N}+\od)}(-1)^{p(b)(\ol{N}+\od)}[b_{\Lambda_2}[a_{\Lambda_1}c]]. 
\end{align*}
Thus, we have
\begin{align*}
 &(X\square X)(a\otimes b\otimes c)\\
 &=(X\circ_1X)(a\otimes b\otimes c)+(X\circ_2X)(a\otimes b\otimes c)+(X\circ_2X)^{(1, 2)}(a\otimes b\otimes c)\\
 &=\pm\bigl([a_{\Lambda_1}[b_{\Lambda_2}c]]
 -(-1)^{(p(a)+\ol{N})\ol{N}}[[a_{\Lambda_1}b]_{\Lambda_1+\Lambda_2}c]
 -(-1)^{(p(a)+\ol{N})(p(b)+\ol{N})}[b_{\Lambda_2}[a_{\Lambda_1}c]]\bigr), 
\end{align*}
where $\pm\ceq (-1)^{\wt{p}(a)}(-1)^{p(a)(\ol{N}+\od)}(-1)^{p(b)(\ol{N}+\od)}$. 
Hence the Jacobi identity is equivalent to $X\square X=0$. 
\end{itemize}
Therefore, we have the statement.
\end{proof}

\begin{rmk}
Combined with \cref{prp:1:LP}, the theorem says that a Lie conformal algebra structure on $(V,\nabla)$ is an odd Lie algebra structure on the operad $\oChW{\Pi^{N+1}V}$. 
\end{rmk}

\subsection{\texorpdfstring{$N_W=N$}{NW=N} SUSY chiral operad}\label{ss:W:VA}

Here is the main part of this \cref{s:W}, where we will introduce a natural $N_W=N$ SUSY analogue of the operad of vertex algebras in \cite[\S6]{BDHK}. 

Let us fix a $(1|N)_W$-supervariable $\Lambda=(\lambda, \theta^1, \ldots, \theta^N)$.
For even linear transformations $F$ and $G$ on a linear superspace $V$, we define a linear map $\int_F^G d\Lambda\colon V[\Lambda] \to V$ by 
\begin{align}\label{eq:W:intLv}
 \int_F^G d\Lambda\ \Lambda^{m|I}v\ceq\frac{\delta_{I, [N]}}{m+1}(G^{m+1}v-F^{m+1}v)
 \quad (m\in\bbN, \, I\subset [N], \, v\in V).
\end{align}
The linear map $\int_F^G d\Lambda$ has the parity $\ol{N}$. 
Also, if $V$ is a superalgebra (not necessarily unital nor associative), we define a linear map 
$\int_F^G d\Lambda \, a\colon V[\Lambda] \to V$ for $a\in V$ by
\begin{align*}
 \Bigl(\int_F^G d\Lambda \, a\Bigr)\Lambda^{m|I}v \ceq 
 \Bigl(\int_F^G d\Lambda\ \Lambda^{m|I}a\Bigr)v,
\end{align*}
where the term $\int_F^G d\Lambda\ \Lambda^{m|I}a$ in the right hand side is given by \eqref{eq:W:intLv}. Then the linear map $\int_F^G d\Lambda \,a$ has the parity $p(a)$. 
Using this integral, we introduce:

\begin{dfn}[{\cite[Definition 3.3.15]{HK}}]\label{dfn:NWVA}
Let $(V,\nabla,[\cdot_\Lambda\cdot])=(V,T,S^1,\dotsc,S^N,[\cdot_\Lambda\cdot])$ be an $N_W=N$ SUSY Lie conformal algebra (\cref{dfn:W:LCA}) and $\mu\colon V \otimes V \to V$ be an even linear map. We denote $a b\ceq\mu(a\otimes b)$ for $a, b\in V$. A tuple $(V,\nabla,[\cdot_\Lambda\cdot],\mu)$ is called a \emph{non-unital $N_W=N$ SUSY vertex algebra} if it satisfies the following conditions: 
\begin{clist}
\item For any $a, b\in V$, 
\begin{align}\label{eq:VA:der}
 T(ab)=(Ta)b+a(Tb), \quad S^i(ab)=(S^ia)b+(-1)^{p(a)}a(S^ib)\quad(i\in[N]). 
\end{align}

\item (\emph{quasi-commutativity}) For any $a, b\in V$, 
\begin{align}\label{eq:VA:qcom}
 ab-(-1)^{p(a)p(b)}ba=\int_{-T}^0d\Lambda[a_\Lambda b]. 
\end{align}

\item (\emph{quasi-associativity}) For any $a, b, c\in V$, 
\begin{align}\label{eq:VA:qass}
  (ab)c-a(bc) = \Bigl(\int_0^Td\Lambda a\Bigr)[b_\Lambda c]
+(-1)^{p(a)p(b)}\Bigl(\int_0^Td\Lambda b\Bigr)[a_\Lambda c]. 
\end{align}

\item (\emph{Wick formula}) For any $a, b, c\in V$, 
\begin{align}\label{eq:VA:Wick}
 [a_\Lambda bc]=[a_\Lambda b]c+(-1)^{(p(a)+\ol{N})p(b)}b[a_\Lambda c]+\int_0^\lambda d\Gamma [[a_\Lambda b]_\Gamma c], 
\end{align}
where $\Gamma$ is an additional $(1|N)_W$-supervariable. 
\end{clist}

For simplicity, we say $(V, \nabla)$, or more simply $V$, is a non-unital $N_W=N$ SUSY vertex algebra. The map $\mu$ is called the \emph{multiplication} of $V$.
\end{dfn}

\begin{dfn}
A non-unital $N_W=N$ SUSY vertex algebra $V$ is called an \emph{$N_W=N$ SUSY vertex algebra} if there exists an even element $\vac \in V$ such that $a\vac=\vac a=a$ for all $a\in V$. 
\end{dfn}

\begin{eg}[{\cite[Example 5.4]{HK}}]
Let $F_N$ be the $N_W=N$ SUSY vertex algebra generated by even $\alpha$ and odd $\varphi$ whose non-trivial OPE is
\begin{align*}
 [\alpha_\Lambda \varphi] = \vac. 
\end{align*}
When $N=1$, expand the corresponding superfields
\begin{align*}
 \alpha(Z) = a(z)+\theta\psi(z), \quad \varphi(Z) = \phi(z)+\theta b(z), 
\end{align*}
then we have
\begin{align*}
 [b_\lambda a] = [\psi_\lambda \phi] = \vac.
\end{align*}
These are the OPEs of well-known $bc$-$\beta\gamma$ system. 
\end{eg}

In the remaining of this \cref{ss:W:VA}, let $(V,\nabla)$ be an $\clH_W$-supermodule and $\Lambda_k=(\lambda_k, \theta_k^1, \ldots, \theta_k^N)$ be a $(1|N)_W$-supervariable for each $k\in\bbZ_{>0}$. 

Now we give several preliminaries to introduce the $N_W=N$ SUSY chiral operad (see \cref{dfn:W:Pch}).  
\cref{lem:rWick} and \cref{lem:lsym} below are SUSY analogues of \cite[(1.38), (1.41)]{DK}.

\begin{lem}\label{lem:rWick}
 Let $[\cdot_\Lambda\cdot]: V\otimes V \to V[\Lambda]$ be a linear map of parity $\ol{N}$ satisfying the sesquilinearity \eqref{eq:LCA:sesq} and $\mu: V\otimes V\to V$ be an even linear map satisfying \eqref{eq:VA:der}. In addition, we assume that $[\cdot_\Lambda\cdot]$ and $\mu$ satisfy the skew symmetry \eqref{eq:LCA:ssym} and the quasi-commutativity \eqref{eq:VA:qcom}. Then the Wick formula \eqref{eq:VA:Wick} is equivalent to the following identity which is called the \emph{right Wick formula}: 
\begin{align*}
 [ab_\Lambda c]
 =(-1)^{p(a)\ol{N}}(e^{\nabla\cdot\pdd_{\Lambda}}a)[b_\Lambda c]
 +(-1)^{(p(a)+\ol{N})p(b)}(e^{\nabla\cdot\pdd_{\Lambda}}b)[a_\Lambda c]
 +(-1)^{(p(a)+\ol{N})p(b)}\int_0^\lambda d\Gamma[b_\Gamma[a_{\Lambda-\Gamma}c]].
\end{align*}
Here we denoted $\pdd_\Lambda\ceq(\pdd_\lambda, \pdd_{\theta^1}, \ldots, \pdd_{\theta^N})$, $\nabla\cdot\pdd_\Lambda\ceq T\pdd_\lambda+\sum_{i=1}^NS^i\pdd_{\theta^i}$, and for $a\in V$, we defined a linear map $e^{\nabla\cdot\pdd_{\Lambda}}a\colon V[\Lambda] \to V[\Lambda]$ by
\begin{align*}
 \bigl(e^{\nabla\cdot\pdd_{\Lambda}}a\bigr)\Lambda^{m|I}v \ceq 
 (-1)^{p(a) \cdot \ol{\#I}}\bigl(e^{\nabla\cdot\pdd_{\Lambda}}\Lambda^{m|I}a\bigr)v 
 \quad  (m \in \bbN, \, I \subset [N], \, v \in V).
\end{align*}
\end{lem}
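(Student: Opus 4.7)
The plan is to derive the right Wick formula from the Wick formula (and vice versa) by a double application of the skew-symmetry \eqref{eq:LCA:ssym}, which is the $N_W=N$ SUSY analogue of the argument giving \cite[(1.41)]{DK} in the non-SUSY case.

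First, I would apply skew-symmetry \eqref{eq:LCA:ssym} to the outer bracket on the left-hand side of the target identity:
\[
 [ab_{\Lambda}c] = -(-1)^{(p(a)+p(b))p(c)+\ol{N}}\,[c_{-\Lambda-\nabla}(ab)].
\]
Invoking the Wick formula \eqref{eq:VA:Wick} with the role of the triple $(a,b,c)$ played by $(c,a,b)$ and the supervariable set to $-\Lambda-\nabla$ then expands $[c_{-\Lambda-\nabla}(ab)]$ into three terms: one of the form $[c_{-\Lambda-\nabla}a]\,b$, one of the form $a\,[c_{-\Lambda-\nabla}b]$, and a nested integral term $\int_{0}^{-\lambda-T}d\Gamma\,[[c_{-\Lambda-\nabla}a]_{\Gamma}b]$, each with the appropriate sign.

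Next, I would apply skew-symmetry once more to each of the inner brackets $[c_{-\Lambda-\nabla}a]$ and $[c_{-\Lambda-\nabla}b]$ to convert them back into $[a_{\Lambda}c]$ and $[b_{\Lambda}c]$, respectively. The essential point is that the residual shift $-\nabla$ from the original skew-symmetry still acts on the ambient products (e.g.\ $[a_{\Lambda}c]\cdot b$); the derivation property \eqref{eq:VA:der} distributes it as $\nabla(d\cdot e)=(\nabla d)\cdot e+(-1)^{p(d)}d\cdot(\nabla e)$, and summing the iterated Leibniz rule together with the sesquilinearity \eqref{eq:LCA:sesq} assembles exactly the exponential shift operator $e^{\nabla\cdot\pdd_{\Lambda}}a$ (resp.\ $e^{\nabla\cdot\pdd_{\Lambda}}b$) appearing on the right-hand side of the right Wick formula. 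For the nested integral term, I would change the integration variable via $\Gamma\mapsto -\Lambda-\Gamma$ (after accounting for the $\nabla$-shift via sesquilinearity) and apply skew-symmetry to the inner bracket, producing $[a_{\Lambda-\Gamma}c]$ paired with $b$; if the outer pairing is in the wrong order, one further use of quasi-commutativity \eqref{eq:VA:qcom} corrects it and yields the target term $(-1)^{(p(a)+\ol{N})p(b)}\int_{0}^{\lambda}d\Gamma\,[b_{\Gamma}[a_{\Lambda-\Gamma}c]]$.

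The main obstacle will be sign bookkeeping: in the $N_W=N$ setting the parity $\ol{N}$ enters at every invocation of skew-symmetry, and the definition of $e^{\nabla\cdot\pdd_{\Lambda}}a$ as an operator on $V[\Lambda]$ (as spelled out in the statement, with the $(-1)^{p(a)\ol{\#I}}$ factor) must be carefully reconciled with the order in which $\nabla$, the Leibniz rule, and the anticommutation of $S^i$ with $\theta^j$ are applied. Once these signs are tabulated, the identity follows by formal algebraic manipulation, and the reverse implication (right Wick $\Rightarrow$ Wick) is obtained symmetrically since \eqref{eq:LCA:ssym} is involutive modulo sign.
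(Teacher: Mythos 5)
Your proposal is correct and follows essentially the same route as the paper's proof: skew-symmetry on the outer bracket, expansion of $[c_{-\Lambda-\nabla}(ab)]$ via the Wick formula, skew-symmetry on the inner brackets with the residual $\nabla$-shift assembling into $e^{\nabla\cdot\pdd_{\Lambda}}$, and quasi-commutativity to reorder the product terms (whose correction integral merges with the nested integral). The only cosmetic difference is that the paper first factors the shift as $[c_{-\Lambda-\nabla}ab]=e^{\nabla\cdot\pdd_{\Lambda}}[c_{-\Lambda}ab]$ and applies the Wick formula at the genuine supervariable $-\Lambda$, whereas you substitute $-\Lambda-\nabla$ directly and distribute the shift afterwards.
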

\begin{proof}
If the Wick formula holds, then
\begin{align*}
[ab_\Lambda c]
 &=\pm[c_{-\Lambda-\nabla}ab]
 =\pm e^{\nabla\cdot\pdd_{\Lambda}}[c_{-\Lambda}ab]\\
 &=\pm e^{\nabla\cdot\pdd_{\Lambda}}\Bigl([c_{-\Lambda}a]b+(-1)^{(p(c)+\ol{N})p(a)}a[c_{-\Lambda}b]+\int_0^{-\lambda}d\Gamma\,[[c_{-\Lambda}a]_{\Gamma}b]\Bigr)\\
 &=\pm e^{\nabla\cdot\pdd_{\Lambda}}\Bigl((-1)^{(p(a)+p(c)+\ol{N})p(b)}b[c_{-\Lambda}a]+(-1)^{(p(c)+\ol{N})p(a)}a[c_{-\Lambda}b]+\int_{-T}^{-\lambda}d\Gamma\,[[c_{-\Lambda}a]_{\Gamma}b]\Bigr), 
\end{align*}
Here we set $\pm\ceq (-1)^{(p(a)+p(b))p(c)+\ol{N}+\od}$, and
used the skew-symmetry in the first equality, the Wick formula in the third equality, and the quasi-commutativity in the last equality. A direct calculation shows that
\begin{align*}
&\pm e^{\nabla\cdot\pdd_{\Lambda}}(b[c_{-\Lambda}a])
=\pm\left(e^{\nabla\cdot\pdd_\Lambda}b\right)[c_{-\Lambda-\nabla} a]
=(-1)^{p(b)p(c)}\left(e^{\nabla\cdot\pdd_\Lambda}b\right)[a_{\Lambda}c]\\
&\pm e^{\nabla\cdot\pdd_{\Lambda}}(a[c_{-\Lambda}b])
=(-1)^{p(a)p(c)}\left(e^{\nabla\cdot\pdd_\Lambda}a\right)[b_\Lambda c]
\end{align*}
and 
\begin{align*}
\pm e^{\nabla\cdot\pdd_{\Lambda}}\int_{-T}^{-\lambda}d\Gamma\,[[c_{-\Lambda}a]_{\Gamma}b]
=(-1)^{(p(a)+\ol{N})p(b)}\int_0^\lambda d\Gamma\, [b_\Gamma[a_{\Lambda-\Gamma}c]]
\end{align*}
by which the right Wick formula holds. We can similarly prove the converse. 
\end{proof}

\begin{lem}\label{lem:lsym}
Let $[\cdot_\Lambda\cdot]\colon V\otimes V \to V[\Lambda]$ and $\mu\colon V\otimes V\to V$ be the same as in \cref{lem:rWick}. In addition, we assume that $[\cdot_\Lambda\cdot]$ and $\mu$ satisfy the skew-symmetry \eqref{eq:LCA:ssym}, the quasi-commutativity \eqref{eq:VA:qcom} and the Wick formula \eqref{eq:VA:Wick}. Then the quasi-associativity \eqref{eq:VA:qass} is equivalent to the following identity: 
\begin{align}\label{eq:lsym}
 a(bc)-(-1)^{p(a)p(b)}b(ac)=(ab-(-1)^{p(a)p(b)}ba)c
\end{align}
\end{lem}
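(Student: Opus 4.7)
My plan is to prove the two implications separately, with the forward direction being an elementary symmetrization and the reverse direction requiring a more delicate integration of the Wick formula.

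For the forward direction ($\eqref{eq:VA:qass} \Rightarrow \eqref{eq:lsym}$), I would observe that the right-hand side of quasi-associativity,
\[
\mathcal{R}(a,b,c) \ceq \Bigl(\int_0^T d\Lambda\, a\Bigr)[b_\Lambda c] + (-1)^{p(a)p(b)}\Bigl(\int_0^T d\Lambda\, b\Bigr)[a_\Lambda c],
\]
satisfies the manifest symmetry $\mathcal{R}(a,b,c) = (-1)^{p(a)p(b)} \mathcal{R}(b,a,c)$. Applying \eqref{eq:VA:qass} to both $(a,b,c)$ and $(b,a,c)$ and using this symmetry then forces $(ab)c - a(bc) = (-1)^{p(a)p(b)}\bigl((ba)c - b(ac)\bigr)$, which rearranges to \eqref{eq:lsym}.

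For the reverse direction ($\eqref{eq:lsym} \Rightarrow \eqref{eq:VA:qass}$), the plan is to apply $\int_{-T}^0 d\Lambda$ to the Wick formula \eqref{eq:VA:Wick}. By quasi-commutativity \eqref{eq:VA:qcom}, the left-hand side becomes $a(bc) - (-1)^{p(a)(p(b)+p(c))}(bc)a$, while the three right-side terms yield respectively $\bigl(ab - (-1)^{p(a)p(b)}ba\bigr)c$, a contribution involving $\int_{-T}^0 d\Lambda\, b[a_\Lambda c]$, and the double integral $\int_{-T}^0 d\Lambda \int_0^\lambda d\Gamma\,[[a_\Lambda b]_\Gamma c]$. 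The middle term differs from $b\bigl(ac - (-1)^{p(a)p(c)}ca\bigr)$ by ``derivation defects'' produced by \eqref{eq:VA:der}---precisely the $(T^k b)(\dotsb)$ and $(S^i b)(\dotsb)$ contributions that reassemble into the $\int_0^T d\Lambda\, b[a_\Lambda c]$ integrand of \eqref{eq:VA:qass}. The double integral is handled by a Fubini-type change of order combined with the sesquilinearity \eqref{eq:LCA:sesq} of the inner bracket. After invoking quasi-commutativity once more to move $(bc)a$ back to $(-1)^{p(a)(p(b)+p(c))}a(bc)$ plus an integral term, and then using \eqref{eq:lsym} to eliminate $a(bc) - (-1)^{p(a)p(b)} b(ac)$ in favor of $\bigl(ab - (-1)^{p(a)p(b)} ba\bigr)c$, the surviving identity becomes precisely \eqref{eq:VA:qass}.

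The principal obstacle will be the SUSY sign bookkeeping---the parity $\ol{N}$ of $[\cdot_\Lambda\cdot]$ and of the integration measure $\theta^{[N]}$, together with the extra signs produced when the odd generators $S^i$ are commuted past odd symbols---and the precise conversion of $\int_{-T}^0 d\Lambda\, b[a_\Lambda c]$ into the $\int_0^T d\Lambda$ form required by \eqref{eq:VA:qass}, which proceeds via iterated use of the Leibniz rule \eqref{eq:VA:der} to push $T^{m+1}$ and the $S^i$'s through the multiplication by $b$.
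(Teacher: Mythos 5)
Your forward direction coincides with the paper's: the right-hand side of \eqref{eq:VA:qass} is supersymmetric under $a\leftrightarrow b$, so subtracting the $(b,a,c)$ instance of \eqref{eq:VA:qass} from the $(a,b,c)$ instance yields \eqref{eq:lsym} immediately.

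The reverse direction, however, has a genuine gap in how you distribute $\int_{-T}^0 d\Lambda$ over the Wick formula. The first term does \emph{not} integrate to $\bigl(ab-(-1)^{p(a)p(b)}ba\bigr)c$: writing $[a_\Lambda b]=\sum_{m,I}\Lambda^{m|I}a_{m|I}b$, definition \eqref{eq:W:intLv} gives
\begin{align*}
 \int_{-T}^0 d\Lambda\,\bigl([a_\Lambda b]c\bigr)
 =\sum_{m}\tfrac{(-1)^m}{m+1}\,T^{m+1}\bigl((a_{m|[N]}b)c\bigr),
 \qquad
 \Bigl(\int_{-T}^0 d\Lambda\,[a_\Lambda b]\Bigr)c
 =\sum_{m}\tfrac{(-1)^m}{m+1}\,\bigl(T^{m+1}(a_{m|[N]}b)\bigr)c,
\end{align*}
and these differ by all the Leibniz cross-terms $\binom{m+1}{j}\bigl(T^{j}(a_{m|[N]}b)\bigr)\bigl(T^{m+1-j}c\bigr)$ with $0\le j\le m$. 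The analogous defects in the middle term are likewise not only the ``top'' pieces $(T^{m+1}b)(a_{m|[N]}c)$ that would reassemble into $(-1)^{p(a)p(b)}\bigl(\int_0^T d\Lambda\, b\bigr)[a_\Lambda c]$; the intermediate cross-terms $(T^{j}b)\bigl(T^{m+1-j}(a_{m|[N]}c)\bigr)$ with $0<j<m+1$ survive and must cancel against the double integral, and your sketch supplies no mechanism for this cancellation, nor any source for the other required term $\bigl(\int_0^T d\Lambda\, a\bigr)[b_\Lambda c]$. Finally, ``invoking quasi-commutativity once more to move $(bc)a$ back to $a(bc)$ plus an integral term'' is circular: it merely reproduces the quantity $\int_{-T}^0 d\Lambda\,[a_\Lambda (bc)]$ you started from. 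The paper sidesteps all of this by starting from $(ab)c-a(bc)$, reducing to integrals via quasi-commutativity and \eqref{eq:lsym}, exchanging the order of integration in the resulting double integral, and then applying the \emph{right} Wick formula of \cref{lem:rWick}, whose $e^{\nabla\cdot\pdd_\Lambda}$ factors are precisely the device that packages these Leibniz cross-terms; to repair your route you would essentially have to re-derive that bookkeeping inside your computation.
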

\begin{proof}
Since the right-hand side of the quasi-associativity is symmetric with respect to $a$ and $b$, it is clear that the quasi-associativity implies the identity \eqref{eq:lsym}. Conversely, we assume the identity \eqref{eq:lsym}. By the quasi-commutativity and the identity \eqref{eq:lsym}, we have
\begin{align*}
 (ab)c-a(bc)
 &=(-1)^{(p(a)+p(b))p(c)}c(ab)-(-1)^{p(b)p(c)}a(cb)
 +\int_{-T}^0d\Lambda[ab_\Lambda c]-a\int_{-T}^0d\Lambda [b_\Lambda c]\\
 &=-(-1)^{p(b)p(c)}\Bigl(\int_{-T}^0d\Lambda[a_\Lambda c]\Bigr)b
 +\int_{-T}^0d\Lambda[ab_\Lambda c]-a\int_{-T}^0d\Lambda [b_\Lambda c]\\
 &=(-1)^{p(a)p(b)}\int_{-T}^0d\Gamma\Bigl[b_{\Gamma}\Bigl(\int_{-T}^0d\Lambda [a_\Lambda c]\Bigr)\Bigr]\\
 &\quad-(-1)^{p(a)p(b)}b\int_{-T}^0d\Lambda [a_\Lambda c]
 +\int_{-T}^0d\Lambda[ab_\Lambda c]-a\int_{-T}^0d\Lambda [b_\Lambda c].
\end{align*}
We can calculate the double integral as
\begin{align*}
 \int_{-T}^0d\Lambda_2\Bigl[b_{\Lambda_2}\Bigl(\int_{-T}^0d\Lambda_1 [a_{\Lambda_1} c]\Bigr)\Bigr]
 &=(-1)^{(p(b)+\ol{N})\ol{N}}\int_{-T}^0d\Lambda_2\int_{-\lambda_2-T}^0d\Lambda_1[b_{\Lambda_2}[a_{\Lambda_1} c]]\\
 &=(-1)^{(p(b)+\ol{N})\ol{N}}\int_{-T}^0d\Lambda_2\int_{-T}^{\lambda_2}d\Lambda_1[b_{\Lambda_2}[a_{\Lambda_1-\Lambda_2} c]]\\
 &=(-1)^{p(b)N}\int_{-T}^0d\Lambda_1\int_{\lambda_1}^0d\Lambda_2[b_{\Lambda_2}[a_{\Lambda_1-\Lambda_2}c]],
\end{align*}
so by \cref{lem:rWick}, we obtain
\begin{align*}
 &(-1)^{p(a)p(b)}\int_{-T}^0d\Gamma\Bigl[b_{\Gamma}\Bigl(\int_{-T}^0d\Lambda [a_{\Lambda} c]\Bigr)\Bigr]\\
 &=-\int_{-T}^0d\Lambda[ab_\Lambda c]
 +(-1)^{p(a)\ol{N}}\int_{-T}^0d\Lambda(e^{\nabla\cdot\pdd_\Lambda}a)[b_\Lambda c]
 +(-1)^{(p(a)+\ol{N})p(b)}\int_{-T}^0d\Lambda(e^{\nabla\cdot\pdd_\Lambda}b)[a_\Lambda c].
\end{align*}
Thus 
\begin{align*}
 (ab)c-a(bc)
 &=(-1)^{p(a)\ol{N}}\int_{-T}^0d\Lambda(e^{\nabla\cdot\pdd_\Lambda}a)[b_\Lambda c]
 -a\int_{-T}^0d\Lambda [b_\Lambda c]\\
 &\quad+(-1)^{p(a)p(b)}\Bigl((-1)^{p(b)\ol{N}}\int_{-T}^0d\Lambda(e^{\nabla\cdot\pdd_\Lambda}b)[a_\Lambda c]-b\int_{-T}^0d\Lambda [a_\Lambda c]\Bigr).
\end{align*}
Since a direct calculation shows
\begin{align*}
 (-1)^{p(a)\ol{N}}\int_{-T}^0d\Lambda(e^{\nabla\cdot\pdd_\Lambda}a)[b_\Lambda c]
 -a\int_{-T}^0d\Lambda [b_\Lambda c]
 =\Bigl(\int_0^Td\Lambda a\Bigr)[b_\Lambda c],
\end{align*}
the identity \eqref{eq:lsym} implies the quasi-associativity. 
\end{proof}

Next we introduce the indefinite integral of the $\Lambda$-bracket (\cref{dfn:W:indef}), and translate the definition of the $N_W=N$ SUSY vertex algebra using the integral (\cref{prp:W:skecom,prp:W:Jqas}). As a preliminary, let us recall the residue map \cite[3.1.2]{HK}:
\begin{align*}
 \Res_\Lambda(\lambda^{-1}-)\colon V[\Lambda] \lto V, \quad 
 \Res_{\Lambda}(\lambda^{-1}\Lambda^{m|I}a) \ceq \delta_{m,0} \delta_{I,[N]} a \quad 
 (m \in \bbN, \, I \subset [N], \, a \in V). 
\end{align*}
It satisfies the following property.

\begin{lem}\label{lem:intbra}
For a linear map $[\cdot_\Lambda\cdot]\colon V \otimes V \to V[\Lambda]$ of parity $\ol{N}$ satisfying the sesquilinearity \eqref{eq:LCA:sesq}
and an even linear map $\mu\colon V\otimes V\to V$ satisfying \eqref{eq:VA:der},
there exists a unique linear map $F\colon V\otimes V \to V[\Lambda]$ of parity $\ol{N}$ satisfying the following conditions for every $a, b\in V$: 
\begin{align}\label{eq:2:intbra}
\begin{split}
&\Res_\Lambda(\lambda^{-1}F(S^i a \otimes b))= 
 (-1)^{N+1} \Res_\Lambda(\lambda^{-1}\theta^i F(a \otimes b)) \quad (i \in [N]), \\
&\pdd_\lambda F(a \otimes b) = [a_\Lambda b], \quad 
\Res _\Lambda\left(\lambda^{-1}F(a \otimes b)\right) =ab.
\end{split}
\end{align}
\end{lem}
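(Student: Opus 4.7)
The plan is to determine $F$ by expanding in the supervariable $\Lambda$ and reading off its coefficients from the three conditions. Writing
\[
F(a\otimes b)=\sum_{m\ge 0,\,I\subset[N]}\lambda^m\theta^I F_{m,I}(a,b),\qquad [a_\Lambda b]=\sum_{m\ge 0,\,I\subset[N]}\lambda^m\theta^I c_{m,I}(a,b),
\]
the task reduces to pinning down each linear map $F_{m,I}\colon V\otimes V\to V$.

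Condition (2) is an antidifferentiation identity in $\lambda$, so comparing coefficients gives $F_{m,I}(a,b)=\tfrac{1}{m}c_{m-1,I}(a,b)$ for every $m\ge 1$, uniquely fixing the $\lambda$-positive part of $F$ in terms of $[\cdot_\Lambda\cdot]$. What remains is the $\lambda^0$-part $\{F_{0,I}\}_{I\subset[N]}$. Condition (3) directly gives the top coefficient $F_{0,[N]}(a,b)=ab$. The first condition, after extracting the $\lambda^0\theta^{[N]}$ part via $\theta^i\theta^{[N]\setminus\{i\}}=\sigma(\{i\},[N]\setminus\{i\})\theta^{[N]}$, reduces to
\[
F_{0,[N]}(S^ia,b)=(-1)^{N+1}\sigma(\{i\},[N]\setminus\{i\})\,F_{0,[N]\setminus\{i\}}(a,b)\qquad(i\in[N]),
\]
and combined with condition (3) this expresses $F_{0,[N]\setminus\{i\}}(a,b)=(-1)^{N+1}\sigma(\{i\},[N]\setminus\{i\})^{-1}(S^ia)b$. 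Iterating condition (1) by substituting successive $S^{i_k}\dotsm S^{i_1}a$ for $a$ and combining with the formulas already derived, each remaining $F_{0,I}$ is fixed by downward induction on $|I|$.

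With the coefficients determined, existence follows by taking $F$ to be the formal polynomial they assemble and checking the three conditions against this explicit expression. Uniqueness is then immediate: any other candidate $F'$ yields $F-F'$ satisfying the analogues of (1)--(3) with both $[\cdot_\Lambda\cdot]$ and $\mu$ replaced by zero, forcing $F-F'=0$ by the same coefficient recursion. The main bookkeeping burden, and the point where care is required, is managing the signs $\sigma(I,J)$ together with the parity $\ol{N}$ of $F$ and the anticommutativity $S^iS^j+S^jS^i=0$, which enter the consistency of the recursion for $F_{0,I}$ when $|I|<N$; these are most cleanly handled by working in the basis $\{\Lambda^{m|I}\}$ and invoking the identity $\theta^I\theta^J=\sigma(I,J)\theta^{I\cup J}$ systematically.
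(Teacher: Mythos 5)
Your proposal is correct and follows essentially the same route as the paper: the paper simply writes down in closed form the map your coefficient recursion assembles, namely $F(a \otimes b)=\sum_{I \subset [N]}(-1)^{\# I(N+1)}\sigma(I)\,\theta^{[N] \bs I}(S^Ia)b+\int_0^\lambda d\lambda \,[a_\Lambda b]$, checks the three conditions, and proves uniqueness by the same observation that $\pdd_\lambda F$ together with $\Res_\Lambda(\lambda^{-1}\theta^I F)$ for all $I\subset[N]$ determine all coefficients of $F$.
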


\begin{proof}
Let us define a linear map $F\colon V \otimes V \to V[\Lambda]$ by 
\begin{align*}
 F(a \otimes b)\ceq 
 \sum_{I \subset [N]}(-1)^{\# I  (N+1)}\sigma(I)\theta^{[N] \bs I}(S^Ia)b
+\int_0^\lambda d\lambda \,[a_\Lambda b] \quad (a,b \in V).
\end{align*}
Then $F$ has the parity $\ol{N}$ and satisfies the conditions \eqref{eq:2:intbra}. 
Conversely, if the linear maps $F, G\colon V\otimes V \to V[\Lambda]$ satisfy \eqref{eq:2:intbra}, then we have the identities
\begin{align*}
 \pdd_\lambda F(a\otimes b)=\pdd_\lambda G(a\otimes b), \quad 
 \Res_\Lambda(\lambda^{-1}\theta^IF(a\otimes b))
 =\Res_\Lambda(\lambda^{-1}\theta^IG(a\otimes b)) \quad (I\subset [N]), 
\end{align*}
which yield $F=G$. 
\end{proof}

\begin{dfn}\label{dfn:W:indef}
Given a linear map $[\cdot_\Lambda\cdot]\colon V \otimes V \to V[\Lambda]$ of parity $\ol{N}$ satisfying the sesquilinearity \eqref{eq:LCA:sesq} and an even linear map $\mu\colon V\otimes V\to V$ satisfying \eqref{eq:VA:der}, we denote by 
\[
 \int^\Lambda d\Gamma[\cdot_\Gamma \cdot]\colon V \otimes V \lto V[\Lambda]
\]
the linear map $F$ in the \cref{lem:intbra}, and call it the integral of $[\cdot_\Lambda \cdot]$ (with respect to $\mu$). 
\end{dfn}
%


\cref{prp:W:skecom,prp:W:Jqas} below are SUSY analogues of \cite[Definition 1.24]{DK}.

\begin{prp}\label{prp:W:skecom}
Let $[\cdot_\Lambda\cdot]\colon V \otimes V \to V[\Lambda]$ be a linear map of parity $\ol{N}$ satisfying the sesquilinearity \eqref{eq:LCA:sesq}, and $\mu\colon V \otimes V\to V$ be an even linear map satisfying \eqref{eq:VA:der}. Then the skew-symmetry \eqref{eq:LCA:ssym} and the quasi-commutativity \eqref{eq:VA:qcom} are equivalent to the following identity: 
\begin{align}\label{eq:W:skecom}
  \int^\Lambda d\Gamma [b_\Gamma a] = 
 (-1)^{p(a)p(b)+\ol{N}}\int^{-\Lambda-\nabla}d\Gamma [a_\Gamma b]. 
\end{align}
\end{prp}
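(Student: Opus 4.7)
The plan is to leverage the uniqueness part of \cref{lem:intbra}, which characterizes $F(a,b) \ceq \int^\Lambda d\Gamma[a_\Gamma b]$ as the unique parity-$\ol{N}$ linear map satisfying $\pdd_\lambda F(a,b) = [a_\Lambda b]$, $\Res_\Lambda(\lambda^{-1}F(a,b)) = ab$, and the $S^i$-sesquilinearity $\Res_\Lambda(\lambda^{-1}F(S^i a,b)) = (-1)^{N+1}\Res_\Lambda(\lambda^{-1}\theta^i F(a,b))$ for each $i\in [N]$. The identity \eqref{eq:W:skecom} reads $F(b, a) = (-1)^{p(a)p(b)+\ol{N}}F(a, b)|_{-\Lambda-\nabla}$, and the plan is to verify it by matching both sides as elements of $V[\Lambda]$.

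First, apply $\pdd_\lambda$ to both sides. The chain rule gives $\pdd_\lambda[(-1)^{p(a)p(b)+\ol{N}}F(a,b)|_{-\Lambda-\nabla}] = -(-1)^{p(a)p(b)+\ol{N}}[a_{-\Lambda-\nabla}b]$, while $\pdd_\lambda F(b, a) = [b_\Lambda a]$; matching these is exactly the skew-symmetry \eqref{eq:LCA:ssym}. Next, apply $\Res_\Lambda(\lambda^{-1}\cdot)$: writing $F(a,b) = \sum_{m,I}\lambda^m\theta^I c_{m,I}$, one has $F(a,b)|_{-\Lambda-\nabla} = \sum_{m,I}(-\lambda-T)^m(-\theta-S)^I c_{m,I}$. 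Only $I=[N]$ can contribute to the $\lambda^0\theta^{[N]}$-coefficient (since $(-\theta-S)^I$ has $\theta$-degree $\le |I|$), and then only via the all-$(-\theta)$ choice, yielding $(-1)^N\sum_m(-T)^m c_{m,[N]}$. Using the explicit primitive from the proof of \cref{lem:intbra}, $c_{0,[N]}=ab$ and $c_{m,[N]}=d_{m-1,[N]}/m$ for $m\ge 1$ (where $[a_\Lambda b]=\sum \lambda^m \theta^I d_{m,I}$), so this equals $(-1)^N[ab - \int_{-T}^0 d\Lambda\,[a_\Lambda b]]$. Matching with $\Res_\Lambda(\lambda^{-1}F(b, a)) = ba$ is exactly the quasi-commutativity \eqref{eq:VA:qcom}.

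The two computations give the ($\Leftarrow$) direction immediately. For ($\Rightarrow$), set $H(b, a) \ceq (-1)^{p(a)p(b)+\ol{N}}F(a,b)|_{-\Lambda-\nabla}$ and verify that $H(b, a)$ satisfies the three conditions characterizing $F(b,a)$ in \cref{lem:intbra}. Those on $\pdd_\lambda$ and $\Res_\Lambda(\lambda^{-1}\cdot)$ follow from the computations above together with skew-symmetry and quasi-commutativity. The remaining $S^i$-condition $\Res_\Lambda(\lambda^{-1}H(S^i b, a)) = (-1)^{N+1}\Res_\Lambda(\lambda^{-1}\theta^i H(b, a))$ is established by applying the residue computation to the shifted pair $(S^i b, a)$ to evaluate the LHS as $(S^i b)a$, and matching the RHS via direct expansion of the $\lambda^0\theta^{[N]\bs\{i\}}$-coefficient of $F(a,b)|_{-\Lambda-\nabla}$, reducing to quasi-commutativity applied to $(a, S^i b)$ combined with the derivation property \eqref{eq:VA:der} and sesquilinearity \eqref{eq:LCA:sesq}. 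Uniqueness in \cref{lem:intbra} then yields $H(b, a) = F(b, a)$, which is \eqref{eq:W:skecom}.

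The main obstacle is verifying the $S^i$-condition for $H$ in the $(\Rightarrow)$ direction: this requires careful sign bookkeeping in the expansion of $(-\theta-S)^I$ for $I$ containing $[N]\bs\{i\}$, and makes essential use of the commutativity $TS^j = S^j T$ as well as the anticommutativity of the $\theta$'s with the $S$'s. Ultimately it amounts to an $S^i$-twisted instance of the quasi-commutativity identity, which is available by assumption since skew-symmetry and quasi-commutativity are postulated as global relations.
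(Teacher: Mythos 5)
Your proposal is correct and follows essentially the same route as the paper: both reduce \eqref{eq:W:skecom} to the characterization of the indefinite integral in \cref{lem:intbra}, matching the $\pdd_\lambda$-derivative with the skew-symmetry and the $\Res_\Lambda(\lambda^{-1}\cdot)$-coefficient with the quasi-commutativity, with the $S^i$-residue compatibility of $\int^{-\Lambda-\nabla}d\Gamma[a_\Gamma b]$ accounting for the remaining $\theta^I$-coefficients. The only cosmetic difference is that the paper obtains that $S^i$-compatibility as an unconditional direct calculation from the sesquilinearity and \eqref{eq:VA:der}, whereas you invoke quasi-commutativity for it; this is harmless since that hypothesis is available in the direction where the condition is actually needed.
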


\begin{proof}
By a direct calculation, we get
\begin{align*}
 &\Res_\Lambda\Bigl(\lambda^{-1}\int^{-\Lambda-\nabla}d\Gamma[a_\Gamma S^ib]\Bigr)
 =(-1)^{p(a)}\Res_\Lambda\Bigl(\lambda^{-1}\theta^i\int^{-\Lambda-\nabla}d\Gamma[a_\Gamma b]\Bigr), \\
&\pdd_\lambda\int^{-\Lambda-\nabla}d\Gamma[a_\Gamma b]=-[a_{-\Lambda-\nabla} b], \quad  
\Res_{\Lambda}\Bigl(\lambda^{-1}\int^{-\Lambda-\nabla}d\Gamma[a_\Gamma b]\Bigr)
 =(-1)^N\Bigl(ab-\int_{-T}^0d\Lambda[a_\Lambda b]\Bigr).
\end{align*}
Thus the skew-symmetry and the quasi-commutativity are equivalent to the identity \eqref{eq:W:skecom}. 
\end{proof}

\begin{prp}\label{prp:W:Jqas}
Let $[\cdot_\Lambda\cdot]\colon V \otimes V \to V[\Lambda]$ and $\mu\colon V \otimes V \to V$ be the same in \cref{prp:W:skecom}. In addition, we assume that $[\cdot_\Lambda\cdot]$ and $\mu$ satisfy the skew-symmetry and the quasi-commutativity. Then the Jacobi identity \eqref{eq:LCA:Jac}, the quasi-associativity \eqref{eq:VA:qass} and the Wick formula \eqref{eq:VA:Wick} are equivalent to the following identity: 
\begin{align}\label{eq:W:Jqas}
\begin{split}
&\int^{\Lambda_1}d\Gamma_1\left[a_{\Gamma_1}\left(\int^{\Lambda_2}d\Gamma_2[b_{\Gamma_2}c]\right)\right]
-(-1)^{(p(a)+\ol{N})(p(b)+\ol{N})}\int^{\Lambda_2}d\Gamma_2\left[b_{\Gamma_2}\left(\int^{\Lambda_1}d\Gamma_1[a_{\Gamma_1}c]\right)\right]\\
&=(-1)^{(p(a)+\ol{N})\ol{N}}\int^{\Lambda_1+\Lambda_2}d\Gamma\left[\left(\int^{\Lambda_1}d\Gamma_1[a_{\Gamma_1}b]-\int^{-\Lambda_2-\nabla}d\Gamma_2[a_{\Gamma_2}b]\right)_{\Gamma}c\right]. 
\end{split}
\end{align}
\end{prp}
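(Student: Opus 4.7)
The plan is to exploit the uniqueness statement in \cref{lem:intbra}: an element of $V[\Lambda]$ is determined by its $\pdd_\lambda$-derivative together with the residues $\Res_\Lambda(\lambda^{-1}\theta^I f)$ for $I\subset[N]$. Both sides of \eqref{eq:W:Jqas} lie in $V[\Lambda_1,\Lambda_2]$, so iterating this characterization in each variable reduces \eqref{eq:W:Jqas} to a finite list of ``projected'' identities in $V$, and I shall match each projection with exactly one of Jacobi \eqref{eq:LCA:Jac}, quasi-associativity \eqref{eq:VA:qass}, or Wick \eqref{eq:VA:Wick}. Since these projections determine the full identity on the nose, this proves the equivalence in both directions simultaneously.

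First, the double-derivative projection $\pdd_{\lambda_1}\pdd_{\lambda_2}$ collapses every iterated integral via $\pdd_\lambda\int^\Lambda d\Gamma[a_\Gamma b]=[a_\Lambda b]$ (and, by the chain rule, $\pdd_\lambda\int^{-\Lambda-\nabla}d\Gamma[a_\Gamma b]=-[a_{-\Lambda-\nabla}b]$), so that \eqref{eq:W:Jqas} reduces to the Jacobi identity \eqref{eq:LCA:Jac} after invoking the skew-symmetry \eqref{eq:LCA:ssym} to process the $-\Lambda_2-\nabla$ term on the right-hand side. Second, the double-residue projection $\Res_{\Lambda_1}(\lambda_1^{-1}f_1)\Res_{\Lambda_2}(\lambda_2^{-1}f_2)$ uses $\Res_\Lambda(\lambda^{-1}\int^\Lambda d\Gamma[a_\Gamma b])=ab$, together with the sibling identity $\Res_\Lambda(\lambda^{-1}\int^{-\Lambda-\nabla}d\Gamma[a_\Gamma b])=(-1)^N(ab-\int_{-T}^0 d\Lambda[a_\Lambda b])$ already established in the proof of \cref{prp:W:skecom}; the $\int_{-T}^0$-terms are then absorbed using quasi-commutativity \eqref{eq:VA:qcom}, leaving precisely the quasi-associativity \eqref{eq:VA:qass}. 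Third, the two mixed projections $\pdd_{\lambda_1}\Res_{\Lambda_2}(\lambda_2^{-1}f)$ and $\Res_{\Lambda_1}(\lambda_1^{-1}f)\pdd_{\lambda_2}$ respectively produce the Wick formula \eqref{eq:VA:Wick} and the right Wick formula of \cref{lem:rWick}, which are equivalent under the current hypotheses. The $\theta$-twisted residue projections $\Res_{\Lambda_i}(\lambda_i^{-1}\theta_i^{I_i}f)$ for $I_i\neq[N]$ reduce to the plain residues via the first relation of \eqref{eq:2:intbra}, and hence bring no new content.

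The main technical obstacle is the consistent bookkeeping of Koszul signs. The factor $(-1)^{(p(a)+\ol{N})\ol{N}}$ on the right-hand side of \eqref{eq:W:Jqas} arises from commuting the outer integration variable $\Gamma$ (carrying parity $\ol{N}$) past the inner bracket; the $(-1)^N$ factor in the sibling residue identity arises from $\Res_\Lambda(\lambda^{-1}\theta^{[N]\setminus I} f)$; and further signs enter through the chain rule for $\int^{-\Lambda-\nabla}$. A convenient device is to absorb one parity shift per bracket into each indefinite integral so that $\int^\Lambda d\Gamma [\cdot_\Gamma \cdot]$ becomes parity-even; with this convention the computation parallels the non-SUSY analogue \cite[Definition~1.24]{DK}. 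Once the signs are pinned down, each of the four projections matches its axiom cleanly, and the uniqueness lift from projections to $V[\Lambda_1,\Lambda_2]$ completes the equivalence.
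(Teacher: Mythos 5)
Your overall strategy is exactly the paper's: both sides of \eqref{eq:W:Jqas} are compared through the projections $\pdd_{\lambda_1}\pdd_{\lambda_2}$, the mixed derivative/residue, and the double residue, with the $\theta$-twisted residues reduced to plain ones via the first relation of \eqref{eq:2:intbra}, and each projection matched to an axiom. Your treatment of the $\pdd_{\lambda_1}\pdd_{\lambda_2}$ and mixed projections agrees with the paper (and you are in fact more explicit than the paper about the fourth projection $\Res_{\Lambda_1}(\lambda_1^{-1}\cdot)\pdd_{\lambda_2}$, which you correctly identify with the right Wick formula of \cref{lem:rWick}).

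There is, however, a genuine gap at the double-residue step. The projection $\Res_{\Lambda_1}\Res_{\Lambda_2}(\lambda_1^{-1}\lambda_2^{-1}\,\cdot\,)$ of \eqref{eq:W:Jqas} does \emph{not} yield the quasi-associativity \eqref{eq:VA:qass}: it produces only iterated products (plus the $\int_{-T}^{0}d\Lambda[a_\Lambda b]$ term coming from the sibling residue identity for $\int^{-\Lambda_2-\nabla}$), and after absorbing that term by quasi-commutativity one lands on the left-symmetry identity \eqref{eq:lsym}, namely $a(bc)-(-1)^{p(a)p(b)}b(ac)=(ab-(-1)^{p(a)p(b)}ba)c$. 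The terms $\bigl(\int_0^T d\Lambda\, a\bigr)[b_\Lambda c]$ appearing in \eqref{eq:VA:qass} simply cannot arise from this projection, so quasi-commutativity alone does not ``leave precisely the quasi-associativity.'' Passing from \eqref{eq:lsym} to \eqref{eq:VA:qass} is exactly the content of \cref{lem:lsym}, whose proof requires the Wick formula (via the right Wick formula of \cref{lem:rWick}) in addition to skew-symmetry and quasi-commutativity. Your argument is repairable -- in the full equivalence the Wick formula is supplied by the mixed projection, so one may replace quasi-associativity by \eqref{eq:lsym} in the list of projected identities and then invoke \cref{lem:lsym} -- but as written the step would fail, and the logical dependence on the Wick formula at this point must be made explicit.
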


\begin{proof}
Let $F(a\otimes b\otimes c)$ (resp.\ $G(a\otimes b\otimes c)$) denote the left hand side (resp.\ the right hand side) of \cref{eq:W:Jqas}. 

The identity 
$\pdd_{\lambda_1}\pdd_{\lambda_2}F(a\otimes b\otimes c)
=\pdd_{\lambda_1}\pdd_{\lambda_2}G(a\otimes b\otimes c)$ 
is equivalent to the Jacobi identity: In fact, a direct calculation yields
\begin{align*}
&\pdd_{\lambda_1}\pdd_{\lambda_2}F(a\otimes b\otimes c)
=[a_{\Lambda_1}[b_{\Lambda_2}c]]-(-1)^{(p(a)+\ol{N})(p(b)+\ol{N})}[b_{\Lambda_2}[a_{\Lambda_1}b]],
\\
&\pdd_{\lambda_1}\pdd_{\lambda_2}G(a\otimes b\otimes c)
=(-1)^{(p(a)+\ol{N})\ol{N}}[[a_{\Lambda_1}b]_{\Lambda_1+\Lambda_2}c].
\end{align*}

The identity 
$\pdd_{\lambda_1}\Res_{\Lambda_2}(\lambda_2^{-1}F(a\otimes b\otimes c))
=\pdd_{\lambda_1}\Res_{\Lambda_2}(\lambda_2^{-1}G(a\otimes b\otimes c))$ 
is equivalent to the Wick formula: In fact, one can get by direct calculation that 
\begin{align*}
&\pdd_{\lambda_1}\Res_{\Lambda_2}(\lambda_2^{-1}F(a\otimes b\otimes c))
=(-1)^{(p(a)+\ol{N})\ol{N}}[a_{\Lambda_1}bc]
 -(-1)^{(p(a)+\ol{N})(p(b)+\ol{N})}b[a_{\Lambda_1}c],\\
&\pdd_{\lambda_1}\Res_{\Lambda_2}(\lambda_2^{-1}G(a\otimes b\otimes c))
=(-1)^{(p(a)+\ol{N})\ol{N}}
  \left([a_{\Lambda_1}b]c+\int_0^{\lambda_1}d\Gamma [[a_{\Lambda_1}b]_\Gamma c]\right). 
\end{align*}

The identity 
$\Res_{\Lambda_1}\Res_{\Lambda_2}(\lambda_1^{-1}\lambda_2^{-1}F(a\otimes b\otimes c))
=\Res_{\Lambda_1}\Res_{\Lambda_2}(\lambda_1^{-1}\lambda_2^{-1}G(a\otimes b\otimes c))$  
is equivalent to the identity \eqref{eq:lsym}: In fact, we have
\begin{align*}
 &\Res_{\Lambda_1}\Res_{\Lambda_2}(\lambda_1^{-1}\lambda_2^{-1}F(a\otimes b\otimes c))
 =(-1)^{(p(a)+\ol{N})\ol{N}}(a(bc)-(-1)^{p(a)p(b)}b(ac)),\\
 &\Res_{\Lambda_1}\Res_{\Lambda_2}(\lambda_1^{-1}\lambda_2^{-1}G(a\otimes b\otimes c))
 =(-1)^{(p(a)+\ol{N})\ol{N}}(ab-(-1)^{p(a)p(b)}ba)c.
\end{align*}

Since we have
\begin{align*}
\Res_{\Lambda_1}(\lambda_1^{-1}\theta_1^IF(a\otimes b\otimes c))
&=(-1)^{\#I(N+1)}\Res_{\Lambda_1}(\lambda_1^{-1}F(S^Ia\otimes b\otimes c)), \\
\Res_{\Lambda_1}(\lambda_1^{-1}\theta_1^IG(a\otimes b\otimes c))
&=(-1)^{\#I(N+1)}\Res_{\Lambda_1}(\lambda_1^{-1}G(S^Ia\otimes b\otimes c)),
\end{align*}
and
\begin{align*}
\Res_{\Lambda_2}(\lambda_2^{-1}\theta_2^IF(a\otimes b\otimes c))
&=(-1)^{\ol{\#I}(p(a)+\od)}\Res_{\Lambda_2}(\lambda_2^{-1}F(a\otimes S^Ib\otimes c)), \\
\Res_{\Lambda_2}(\lambda_2^{-1}\theta_2^IG(a\otimes b\otimes c))
&=(-1)^{\ol{\#I}(p(a)+\od)}\Res_{\Lambda_2}(\lambda_2^{-1}G(a\otimes S^Ib\otimes c)),
\end{align*}
for any $I\subset [N]$, the identity $F(a\otimes b\otimes c)=G(a\otimes b\otimes c)$ holds if and only if the following identities hold:
\begin{gather*}
 \pdd_{\lambda_1}\pdd_{\lambda_2}F(a\otimes b\otimes c)
=\pdd_{\lambda_1}\pdd_{\lambda_2}G(a\otimes b\otimes c), \\
 \pdd_{\lambda_1}\Res_{\Lambda_2}(\lambda_2^{-1}F(a\otimes b\otimes c))
=\pdd_{\lambda_1}\Res_{\Lambda_2}(\lambda_2^{-1}G(a\otimes b\otimes c)), \\
 \Res_{\Lambda_1}\Res_{\Lambda_2}(\lambda_1^{-1}\lambda_2^{-1}F(a\otimes b\otimes c))
=\Res_{\Lambda_1}\Res_{\Lambda_2}(\lambda_1^{-1}\lambda_2^{-1}G(a\otimes b\otimes c)).
\end{gather*}
Hence, by \cref{lem:lsym}, the Jacobi identity, the quasi-associativity and the Wick formula are equivalent to the identity \eqref{eq:W:Jqas}. 
\end{proof}

Now we introduce several superalgebras 
which are $N_W=N$ SUSY analogue of those in \cite[\S6.3]{BDHK}. 

\begin{dfn}\label{dfn:W:O}
Let $n \in \bbZ_{>0}$, and $Z_k=(z_k, \zeta_k^1, \ldots, \zeta_k^N)$ be a $(1|N)_{W}$-supervariable for each $k \in [n]$. We set 
\begin{align}\label{eq:W:zkl}
 z_{k,l}\ceq z_k-z_l, \quad \zeta_{k,l}^{i} \ceq \zeta_k^i-\zeta_l^i
\end{align}
for $i \in [N]$ and $k,l \in [n]$. Also, we set $Z_{k,l} \ceq (z_{k,l},\zeta_{k,l}^1,\dotsc,\zeta_{k,l}^N)$ for simplicity.
\begin{enumerate}
\item
We denote by 
\[
 \clO_n^{\star} \ceq \bbK[Z_k]_{k=1}^n[z_{k, l}^{-1}]_{1 \le k < l \le n}
\]
the localization of $\bbK[Z_k]_{k=1}^n$ by the multiplicatively closed set generated by $\{z_{k,l} \mid 1 \le k<l \le n\}$.

\item
Let $\clO_n^{\star \rmT}$ denote the sub-superalgebra of $\clO_n^{\star}$ generated by the subset $\{z_{k, l}^{\pm 1}\mid 1\le k<l\le n\} \cup \{\zeta_{k,l}^i \mid i \in [N], \, 1 \le k<l \le n\}$. Thus, we have 
\[
 \clO_n^{\star \rmT} = \bbK[z_{k,l}^{\pm1},\zeta_{k,l}^i \mid i \in [N], \ 1 \le k<l \le n].
\]

\item 
Let $\clD_n$ denote the superalgebra of regular differential operators of $Z_k$,  
i.e, the sub-superalgebra of $\End_{\bbK}(\clO_n^{\star})$ generated by $Z_k=(z_k,\zeta_k^1,\dotsc,\zeta_k^N)$ and $\pdd_{Z_k}=(\pdd_{z_k}, \pdd_{\zeta_k^1}, \ldots, \pdd_{\zeta_k^N})$ for $k \in [n]$. As a superspace, we have
\begin{align*}
 \clD_n = \bbK[Z_k]_{k=1}^n [\pdd_{Z_k}]_{k=1}^n = 
 \bbK[z_k,\zeta_k^1,\dotsc,\zeta_k^N]_{k=1}^n
     [\pdd_{z_k},\pdd_{\zeta_k^1},\dotsc,\pdd_{\zeta_k^N}]_{k=1}^n.
\end{align*}
Here $\bbK[Z_k]_{k=1}^n[\pdd_{Z_k}]_{k=1}^n$ is the free commutative superalgebra over $\bbK[Z_k]_{k=1}^n$ generated by even $\pdd_{z_k}$ and odd $\pdd_{\zeta_k^i}$ for $i \in [N]$ and $k \in [n]$. 

\item 
Let $\clD_n^{\rmT}$ denote the sub-superalgebra of $\clD_n$ generated by the subset $\{z_{k,l}, \zeta_{k, l}^i\mid i\in[N], 1\le k<l\le n\}\cup \{\pdd_{z_k}, \pdd_{\zeta_k^i}\mid i\in[N], k\in[n]\}$. Thus we have
\begin{align*}
 \clD_n^{\rmT}=\bbK[Z_{k, l}]_{1\le k<l\le n}[\pdd_{Z_k}]_{k=1}^n. 
\end{align*}
\end{enumerate}

By convention, we denote $\clO_0^{\star}=\clO_0^{\star\rmT}=\clD_0=\clD_0^{\rmT}\ceq \bbK$. 
\end{dfn}

As in the non-SUSY case \cite[\S6.2]{BDHK}, the superalgebra $\clO_n^{\star\rmT}$ is the translation invariant subalgebra of $\clO_n^{\star}$. In other words, we have
\begin{align}\label{eq:W:Ker-delta}
 \clO_n^{\star\rmT} = \Ker(\delta^0) \cap \Ker(\delta^1) \cap \dotsb \cap \Ker(\delta^N) 
 \subset \clO_n^{\star}
\end{align}
with $\delta^0 \ceq \sum_{k=1}^n \pdd_{z_k}$ and $\delta^i \ceq \sum_{k=1}^n \pdd_{\zeta_k^i}$ for $i \in [N]$.

In this subsection, we fix an $\clH_W$-supermodule $(V, \nabla)$. 

Let $n\in\bbZ_{>0}$. 
The space $V^{\otimes n}\otimes \clO_n^{\star\rmT}$ carries the structure of a right $\clD_n^{\rmT}$-supermodule by letting $Z_{k, l}=(z_{k, l}, \zeta_{k, l}^1, \ldots, \zeta_{k, l}^N)$ act as
\begin{align*}
 (v \otimes f) \cdot z_{k,l} \ceq v \otimes f z_{k,l}, \quad 
 (v \otimes f) \cdot \zeta_{k,l}^i \ceq v \otimes f \zeta_{k,l}^i
\end{align*}
and $\pdd_{Z_k}=(\pdd_{z_k}, \pdd_{\zeta_k^1},\ldots,\pdd_{\zeta_k^N})$ act as
\begin{align*}
 (v \otimes f) \cdot \pdd_{z_k}
&\ceq T^{(k)}v \otimes f - v \otimes \pdd_{z_k}f, \\ 
 (v \otimes f)\cdot \pdd_{\zeta_k^i}
&\ceq (-1)^{p(v)+p(f)}S^{(k)}v \otimes f - (-1)^{p(f)}v \otimes \pdd_{\zeta_k^i}f. 
\end{align*}
for each $v\in V^{\otimes n}$ and $f \in \clO_n^{\star\rmT}$. Here, for a linear transformation $\varphi$ on $V$, the symbol $\varphi^{(k)}$ denotes the linear transformation on $V^{\otimes n}$ defined by $\varphi^{(k)}\ceq \id_V\otimes \cdots \otimes \overset{k}{\varphi} \otimes \cdots \otimes \id_V$. 

Now let us recall the linear superspace $V_\nabla[\Lambda_k]_{k=1}^n$ in \eqref{eq:LCA:HWn-mod}.
It also has the structure of a right $\clD_n^{\rmT}$-supermodule by letting
\begin{align*}
&(a(\Lambda_1,\ldots,\Lambda_n)\otimes v) \cdot z_{k,l}
 \ceq (\pdd_{\lambda_l}-\pdd_{\lambda_k})a(\Lambda_1,\ldots,\Lambda_n) \otimes v, \\
&(a(\Lambda_1,\ldots,\Lambda_n) \otimes v) \cdot \zeta_{k,l}^i
 \ceq (-1)^{p(a)+p(v)} (\pdd_{\zeta_l^i}-\pdd_{\zeta_k^i}) a(\Lambda_1,\ldots,\Lambda_n) \otimes v, 
\end{align*}
and 
\begin{align*}
&(a(\Lambda_1,\ldots,\Lambda_n) \otimes v) \cdot \pdd_{z_k}
 \ceq -\lambda_k a(\Lambda_1,\ldots,\Lambda_n) \otimes v, \\
&(a(\Lambda_1,\ldots,\Lambda_n) \otimes v) \cdot \pdd_{\zeta_k^i}
 \ceq -(-1)^{p(a)+p(v)} \theta_k^ia(\Lambda_1,\ldots,\Lambda_n) \otimes v
\end{align*}
for each $a(\Lambda_1,\ldots,\Lambda_n) \in \bbK[\Lambda_k]_{k=1}^n$ and $v \in V$. 

Now we introduce the main object of this \cref{ss:W:VA}, which is an $N_W=N$ SUSY analogue of the operad $\oPch{}$ in \cite[\S6.3]{BDHK}. 

\begin{dfn}
For an $\clH_W$-supermodule $V=(V,\nabla)$ and $n \in \bbN$, we denote
\begin{align*}
 \oPchW{V}(n) \ceq \Hom_{\clD_n^\rmT}
 \bigl(V^{\otimes n} \otimes \clO_n^{\star \rmT},V_{\nabla}[\Lambda_k]_{k=1}^n\bigr). 
\end{align*}
In other words, $\oPchW{V}(n)$ is a linear sub-superspace of $\Hom_\bbK\bigl(V^{\otimes n} \otimes \clO_n^{\star \rmT},V_{\nabla}[\Lambda_k]_{k=1}^n\bigr)$ spanned by elements $X$ such that 
\begin{align*}
 X((v \otimes f) \cdot \varphi) = X(v \otimes f) \cdot \varphi \quad 
 (v \in V^{\otimes n}, \, f \in \clO_n^{\star\rmT}, \, \varphi \in \clD_n^{\rmT}). 
\end{align*}
To stress the variables $\Lambda_1,\ldots,\Lambda_n$, we express an element $X \in \oPchW{V}(n)$ as $X_{\Lambda_1,\ldots,\Lambda_n}$. 
Note that we have 
\begin{align*}
&\oPchW{V}(0) = \Hom_{\bbK}(\bbK \otimes \bbK, V/\nabla V)\cong V/\nabla V, \\
&\oPchW{V}(1) = \Hom_{\bbK[\pdd_Z]}(V \otimes \bbK, V_\nabla[\Lambda]) \cong \End_{\clH_W}V. 
\end{align*}
\end{dfn}

For $\sigma \in \frS_n$ and $X \in \oPchW{V}(n)$, we define a linear map $X^\sigma\colon V^{\otimes n} \otimes \clO_n^{\star \rmT} \to V_{\nabla}[\Lambda_k]_{k=1}^n$ by
\begin{align*}
 X^\sigma(v_1 \otimes \cdots \otimes v_n \otimes f) \ceq 
 X_{\sigma(\Lambda_1,\ldots,\Lambda_n)}(\sigma(v_1 \otimes \cdots \otimes v_n) \otimes \sigma f)
\end{align*}
for $v_1, \ldots, v_n\in V$ and $f \in \clO_n^{\star \rmT}$, where 
\begin{align*}
 \sigma(\Lambda_1,\ldots,\Lambda_n) &=
 (\Lambda_{\sigma^{-1}(1)},\ldots,\Lambda_{\sigma^{-1}(n)}), \\
 \sigma(v_1\otimes \cdots \otimes v_n) &= 
 \prod_{\substack{1 \le k < l \le n \\ \sigma(k) > \sigma(l)}} (-1)^{p(v_k)p(v_l)}
  (v_{\sigma^{-1}(1)}\otimes \cdots \otimes v_{\sigma^{-1}(n)})
\end{align*}
as before and 
\begin{align*}
 (\sigma f)(Z_1,\ldots,Z_n) \ceq f(Z_{\sigma(1)},\ldots,Z_{\sigma(n)}). 
\end{align*}
Then, one can verify $X^\sigma \in \oPchW{V}(n)$ and that the linear superspace $\oPchW{V}(n)$ is a right $\frS_n$-supermodule by this action. Thus we have an $\frS$-supermodule 
\[
 \oPchW{V} \ceq \bigl(\oPchW{V}(n)\bigr)_{n\in\bbN}. 
\]

\begin{dfn}
Let $m,n \in \bbN$, $(n_1,\ldots,n_m) \in \bbN^m_n$, and set $N_j \ceq n_1+\cdots+n_j$ for $j \in [m]$ and $N_0\ceq 0$. 
\begin{enumerate}
\item
For $Y_j\in \oPchW{V}(n_j)$ with $j\in[m]$, we define a linear map
\begin{align*}
 Y_1 \odot \cdots \odot Y_m\colon V^{\otimes n} \otimes \clO^{\star\rmT}_n \lto 
 \bigotimes_{j=1}^m V[\Lambda_k]_{k=N_{j-1}+1}^{N_j-1} \otimes \clO^{\star\rmT}_m
\end{align*}
by 
\begin{align*}
&(Y_1 \odot \cdots \odot Y_m)(v_1 \otimes \cdots \otimes v_n \otimes f) \\
&\ceq \pm (Y_1)_{\Gamma_1,\ldots,\Gamma_{N_1-1}}(w_1 \otimes f_1) \otimes \cdots 
 \otimes (Y_m)_{\Gamma_{N_{m-1}+1},\ldots,\Gamma_{N_m-1}}(w_m \otimes f_m)
 \otimes f_0|_{Z_k=Z_{N_j} (N_{j-1}< k\le N_j)}. 
\end{align*}
for $v_1,\ldots,v_n \in V$ and $f \in \clO^{\star\rmT}_n$. 
Here we denote by
\begin{align*}
 (Y_j)_{\Lambda_1,\ldots,\Lambda_{n_j-1}}(w)
 \ceq (Y_j)_{\Lambda_1,\ldots,\Lambda_{n_j-1},-\Lambda_1,\ldots,-\Lambda_{n_j-1}-\nabla}(w) 
 \quad (w \in V^{\otimes n_j}),
\end{align*} 
the element of $V[\Lambda_k]_{k=1}^{n_j-1}$ corresponding to $Y_j(w)$ by the isomorphism \eqref{eq:LCA:HWn-mod}. 
Also, we set
\begin{align*}
&\pm\ceq \prod_{1\le i<j\le m}(-1)^{(p(w_i)+p(f_i))p(Y_j)}, \quad
 w_j\ceq v_{N_{j-1}+1} \otimes \cdots \otimes v_{N_j} \quad (j \in [m]), \\
& \Gamma_k \ceq \Lambda_k-\pdd_{Z_k} \quad (k \in [n] \bs \{N_1,\ldots,N_m\}), 
\end{align*}
 and
\begin{align}\label{eq:decompf}
 f = f_0 f_1 \cdots f_m, \quad f_0 \in \clO_n^{\star\rmT}, \quad
 f_j = \prod_{N_{j-1}<k<l\le N_j} z_{k,l}^{-m_{k,l}^j} \quad (m_{k,l}^j \in \bbN)
\end{align}
is a decomposition such that $f_0$ has no poles at $z_k=z_l$ ($N_{j-1}<k<l\le N_j$, $j\in[m]$). 
Note that the $Y_1\odot \cdots \odot Y_m$ is independent of the choice of decomposition \eqref{eq:decompf} as in the proof of \cite[Lemma 6.6]{BDHK}. 

\item 
For $X \in \oPchW{V}(m)$ and $Y_j \in \oPchW{V}(n_j)$ with $j \in [m]$, we denote by
\begin{align*}
 X \circ (Y_1 \odot \cdots \odot Y_m)\colon 
 V^{\otimes n} \otimes \clO^{\star\rmT}_n \lto V_\nabla[\Lambda_k]_{k=1}^n
\end{align*}
the composition of linear maps 
\[
 V^{\otimes n} \otimes \clO^{\star\rmT}_n \xrr{Y_1 \odot \cdots \odot Y_m} 
 \bigotimes_{j=1}^m V[\Lambda_k]_{k=N_{j-1}+1}^{N_j-1} \otimes \clO^{\star\rmT}_m
 \xrr{X_{\Lambda'_1,\ldots,\Lambda'_m}}
 V_\nabla[\Lambda_k]_{k=1}^n. 
\]
Here we set $\Lambda'_j \ceq \Lambda_{N_{j-1}+1}+\cdots+\Lambda_{N_j}$. 
Also, the symbol $X_{\Lambda'_1,\ldots,\Lambda'_m}$ is the linear map defined by
\begin{align*}
 a_1 v_1 \otimes \cdots \otimes a_m v_m \otimes f \lmto 
 \pm (a_1 \cdots a_m)X_{\Lambda'_1,\ldots,\Lambda'_m}(v_1 \otimes \cdots \otimes v_m \otimes f)
\end{align*}
for each $a_j \in \bbK[\Lambda_k]_{k=N_{j-1}+1}^{N_j-1}$, $v_j \in V$ ($j \in [m]$) 
and $f \in \clO^{\star\rmT}_m$ with the sign
\begin{align*}
 \pm \ceq \prod_{1 \le i < j \le m} (-1)^{p(v_i)p(a_j)} \cdot \prod_{j=1}^m (-1)^{p(a_j)p(X)}. 
\end{align*}
\end{enumerate}
\end{dfn}

Let $m,n \in \bbN$ and $\nu \in \bbN^m_n$. For $X \in \oPchW{V}(m)$ and $Y_1 \otimes \cdots \otimes Y_m \in \oPchW{V}(\nu)$, a direct calculation shows that
\begin{align*}
 X \circ (Y_1 \odot \cdots \odot Y_m) \in \oPchW{V}(n). 
\end{align*}

\begin{lem}
The $\frS$-supermodule
\[
 \oPchW{V} = \bigl(\oPchW{V}(n)\bigr)_{n \in \bbN}
\]
is a superoperad by letting $X \otimes Y_1 \otimes \cdots \otimes Y_m \mto X \circ (Y_1 \odot \cdots \odot Y_m)$ be the composition map and $\id_V$ be the identity. 
\end{lem}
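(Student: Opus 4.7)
The plan is to follow the non-SUSY proof of \cite[Theorem 6.5 and Lemma 6.6]{BDHK} and track the additional SUSY signs arising from the Koszul sign rule. Four things must be verified: well-definedness of the composition in the sense that $X \circ (Y_1 \odot \cdots \odot Y_m)$ lies in $\oPchW{V}(n)$ (i.e.\ is $\clD_n^{\rmT}$-equivariant), the $\frS$-equivariance relation \eqref{eq:1:gamma_nu}, the associativity axiom (i) of \cref{dfn:1:op}, and the unit axiom (ii) of \cref{dfn:1:op}.

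First I would verify $\clD_n^{\rmT}$-equivariance by checking it on the generators. For $z_{k,l}$ and $\zeta_{k,l}^i$, this reduces to a compatibility between the decomposition \eqref{eq:decompf} (in which the inner pieces $f_j$ already absorb the $z_{k,l}^{-1}$ for $N_{j-1}<k<l\le N_j$) and the substitution $Z_k \mapsto Z_{N_j}$ applied to $f_0$. For $\pdd_{z_k}$ and $\pdd_{\zeta_k^i}$, the appearance of $T^{(k)}$ and $S^{(k)}$ on the source side and the shifts by $-\lambda_k, -\theta_k^i$ on the target side are exactly packaged into the shifted supervariable $\Gamma_k = \Lambda_k - \pdd_{Z_k}$ used in the definition of $Y_1 \odot \cdots \odot Y_m$. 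Independence of the decomposition \eqref{eq:decompf} is shown as in \cite[Lemma 6.6]{BDHK}: the ambiguity is a polynomial that can be shifted between $f_0$ and the $f_j$'s and re-absorbed via the $\clH_W$-sesquilinearity of the isomorphism in \cref{lem:W:VnLn=VLn-1}.

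Next, the $\frS_n$-equivariance \eqref{eq:1:gamma_nu} is a direct sign-tracking computation: I would decompose $\sigma_\nu(\tau_1,\dotsc,\tau_m)$ into the block permutation $\sigma_\nu$ and the intra-block shuffles $\tau_j$, and match the Koszul signs on the $V^{\otimes n}$ side (from \eqref{eq:1:Sn-Vn}) with those appearing in the definition of $Y_1 \odot \cdots \odot Y_m$ when moving each $w_i$ past a $Y_j$. The permutation action on $\clO_n^{\star\rmT}$ by permuting the $Z_k$'s is compatible with the assignment $k \mapsto N_j$ because the block structure is preserved.

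The main obstacle is the associativity axiom: for $f \in \oPchW{V}(l)$, $g_1 \otimes \cdots \otimes g_l \in \oPchW{V}(\mu)$ and $h_1 \otimes \cdots \otimes h_m \in \oPchW{V}(\nu)$, both iterated compositions must be shown to agree with the predicted SUSY sign $\prod_{i<j}\prod_{k=M_{i-1}+1}^{M_i}(-1)^{p(g_j)p(h_k)}$. At the level of rational functions in $\clO_n^{\star\rmT}$ this amounts to a three-level version of the factorization and substitution in \eqref{eq:decompf}, with the nested $\Gamma$-shifts $\Gamma_k = \Lambda_k - \pdd_{Z_k}$ absorbing the derivatives. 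The hard part is the bookkeeping: each odd derivative $\pdd_{\zeta_k^i}$ passed through an odd supervariable $\zeta_{k',l'}^{i'}$ or through an odd $g_j$ or $h_k$ produces a sign, and one has to see that the total matches the stated product. This parallels the non-SUSY argument in \cite[Theorem 6.5]{BDHK} systematically but is tedious. The unit axiom (ii) is then immediate: $\id_V \in \oPchW{V}(1)$ acts trivially on $V$ and on $\clO_1^{\star\rmT} = \bbK$, and the signs appearing in the definitions of $X \circ (\id_V \odot \cdots \odot \id_V)$ and $\id_V \circ X$ are trivial, so both reduce to $X$.
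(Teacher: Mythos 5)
Your proposal follows exactly the route the paper takes: the paper's proof of this lemma consists of the single remark that the non-SUSY argument of \cite[Proposition 6.7]{BDHK} works with minor modifications (with the independence of the decomposition \eqref{eq:decompf} delegated to \cite[Lemma 6.6]{BDHK}, as you also note), and your four-point checklist with the Koszul-sign bookkeeping is precisely the content of those ``minor modifications.'' Your outline is correct and, if anything, more explicit than what the paper records.
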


\begin{proof}
The proof of non-SUSY case in \cite[Proposition 6.7]{BDHK} works with minor modifications. 
We omit the detail. 
\end{proof}

Summarizing the discussion so far, we name the obtained superoperad as follows. Recall the commutative superalgebra $\clH_W$ in \cref{dfn:W:clHW}.

\begin{dfn}\label{dfn:W:Pch}
For an $\clH_W$-supermodule $V=(V,\nabla)$, we call the superoperad $\oPchW{V}$ the \emph{$N_W=N$ SUSY chiral operad}.
\end{dfn}

Recall the symbol $\Pi^{k}V = (\Pi^{k}V,\nabla)$, which is explained immediately after \cref{dfn:W:Chom}. 
Also recall the set $\MC\bigl(L(\oQ)\bigr)$ of Maurer-Cartan solutions $X \square X = 0$, $X \in L(\oQ)^1$ in the graded Lie superalgebra $L(\oQ)$ associated to a superoperad $\oQ$, explained in \cref{ss:1:LQ}. 
The following is the main statement of this \cref{ss:W:VA}, which is a natural $N_W=N$ SUSY analogue of \cite[Theorem 6.12]{BDHK}.

\begin{thm}\label{thm:W:VA}
Let $(V,\nabla)=(V,T,S^1,\dotsc,S^N)$ be an $\clH_W$-supermodule.
\begin{enumerate}
\item
For an odd Maurer-Cartan solution $X \in \MC\bigl(L(\oPchW{\Pi^{N+1}V})\bigr)_{\od}$, define linear maps $[\cdot_\Lambda\cdot]_X\colon V \otimes V \to V[\Lambda]$ and $\mu_X\colon V \otimes V \to V$ by
\begin{align}
 \label{eq:VAlbra}
 [a_\Lambda b]_X &\ceq 
 (-1)^{p(a)(\ol{N}+\od)}X_{\Lambda, -\Lambda-\nabla}(a \otimes b \otimes 1_\bbK), \\
 \label{eq:VAprod}
 \mu_X(a \otimes b) &\ceq (-1)^{p(a)(\ol{N}+\od)+\od}
 \Res_\Lambda \bigl(\lambda^{-1}X_{\Lambda, -\Lambda-\nabla}(a \otimes b \otimes z_{1, 2}^{-1})\bigr)
\end{align}
for each $a,b \in V$. 
Then $(V,\nabla,[\cdot_\Lambda \cdot]_X,\mu_X)$ is a non-unital $N_W=N$ SUSY vertex algebra. 
\item
The map $X \mto ([\cdot_\Lambda \cdot]_X,\mu_X)$ gives a bijection 
\[
 \MC\bigl(L(\oPchW{\Pi^{N+1}V})\bigr)_{\od} \lsto 
 \{\text{non-unital $N_W=N$ SUSY vertex algebra structures on $(V,\nabla)$}\}.
\] 
\end{enumerate}
\end{thm}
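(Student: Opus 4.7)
I will follow the same blueprint as in the proof of \cref{thm:W:LCA}, abbreviating $\wt{V}\ceq \Pi^{N+1}V$ and $\oP \ceq \oPchW{\wt{V}}$, and proceeding in four steps.

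First, I will decode an odd element $X \in \oP(2)_{\od}$. Since $X$ is a right $\clD_2^{\rmT}$-module morphism and $\clO_2^{\star\rmT}=\bbK[z_{1,2}^{\pm 1},\zeta_{1,2}^i]_{i=1}^N$, the $\pdd_{Z_k}$-equivariance absorbs powers of $z_{1,2}$ and factors of $\zeta_{1,2}^i$ into $\Lambda_k$- and $\nabla$-actions, so $X$ is determined by the two linear maps $X_{\Lambda_1,\Lambda_2}(\,\cdot\,\otimes 1_{\bbK})$ and $X_{\Lambda_1,\Lambda_2}(\,\cdot\,\otimes z_{1,2}^{-1})$. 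A parity count as in the proof of \cref{thm:W:LCA} shows that the formulas \eqref{eq:VAlbra} and \eqref{eq:VAprod} produce $[\cdot_\Lambda\cdot]_X$ of parity $\ol{N}$ and an even $\mu_X$.

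Second, I will match the $\clH_W^{\otimes 2}$-equivariance of $X$ with sesquilinearity of $[\cdot_\Lambda\cdot]_X$, and the $\pdd_{z_k}, \pdd_{\zeta_k^i}$-equivariance of $X$ applied to $a\otimes b\otimes z_{1,2}^{-1}$, combined with the residue defining $\mu_X$, with the derivation property \eqref{eq:VA:der}. Using \cref{lem:intbra} and \cref{dfn:W:indef}, the crucial bookkeeping identity is
\begin{align*}
 X_{\Lambda_1,-\Lambda_1-\nabla}(a \otimes b\otimes z_{1,2}^{-1})
 \;=\; \pm\,\textstyle\int^{\Lambda_1} d\Gamma_1\,[a_{\Gamma_1}b]_X,
\end{align*}
which rewrites $X$ entirely in terms of the integral of \cref{dfn:W:indef}. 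Together with Step 1, this yields a bijection between odd $X \in \oP(2)_{\od}$ and pairs $([\cdot_\Lambda\cdot]_X,\mu_X)$ satisfying only the parity, sesquilinearity, and derivation properties.

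Third, I will show that the $\frS_2$-invariance condition $X^\sigma=X$ for the transposition $\sigma=(1,2)$ translates, through the integral formula above and the residue tests $\Res_\Lambda(\lambda^{-1}\,\cdot\,)$ and $\pdd_\lambda(\,\cdot\,)$ that appear in the proof of \cref{prp:W:skecom}, into the identity \eqref{eq:W:skecom}. By \cref{prp:W:skecom} this is equivalent to the conjunction of skew-symmetry \eqref{eq:LCA:ssym} and quasi-commutativity \eqref{eq:VA:qcom}. So after this step we have a bijection between $\frS_2$-invariant, odd elements $X \in L^1(\oP)_{\od}$ and all data $(V,\nabla,[\cdot_\Lambda\cdot],\mu)$ satisfying every axiom except Jacobi, quasi-associativity, and Wick.

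Fourth, and this is the main obstacle, I will translate $X\square X=0$ into the single identity \eqref{eq:W:Jqas}, after which \cref{prp:W:Jqas} finishes the proof. Concretely, I will compute $X\circ_1 X$ and $X\circ_2 X$ on the family of test elements
\begin{align*}
 a\otimes b\otimes c\otimes z_{1,3}^{-1}z_{2,3}^{-1}, \quad
 a\otimes b\otimes c\otimes z_{1,2}^{-1}, \quad
 a\otimes b\otimes c\otimes 1_{\bbK}\ \in\ \wt{V}^{\otimes 3}\otimes \clO_3^{\star\rmT},
\end{align*}
together with their $\zeta_{i,j}^k$-variants. Using the decomposition \eqref{eq:decompf} of functions in $\clO_3^{\star\rmT}$ and the integral formula from Step 2, each composition unravels into an iterated integral of $\Lambda$-brackets. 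The sum $X\square X=\sum_{\sigma\in\frS_{2,1}}(X\circ_1 X)^{\sigma^{-1}}$, rewritten using the $\frS_2$-invariance of $X$ established in Step 3 and mirroring the three-term expansion of the proof of \cref{thm:W:LCA}, then matches the left- and right-hand sides of \eqref{eq:W:Jqas} on each test element. The main obstacle is precisely this last computation: unraveling $X\square X$ on $\clO_3^{\star\rmT}$-elements with poles at several $z_{i,j}$ requires careful control of the $\clD_3^{\rmT}$-action governing the decomposition \eqref{eq:decompf}, together with substantial Koszul sign bookkeeping induced by the parity shift $\Pi^{N+1}$ and the odd operators $S^i$. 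The non-SUSY computation in \cite[Theorem 6.12]{BDHK} provides the combinatorial template, but the supervariables $\zeta_k^i$ and the residue identities from the proofs of \cref{prp:W:skecom} and \cref{prp:W:Jqas} force the additional calculations promised in the introduction.
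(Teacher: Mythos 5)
Your Steps 1--3 match the paper's proof: the same parity count, the same use of the integral $\int^\Lambda d\Gamma[a_\Gamma b]_X=(-1)^{p(a)(\ol{N}+\od)+\od}X_{\Lambda,-\Lambda-\nabla}(a\otimes b\otimes z_{1,2}^{-1})$ to get the bijection with pairs satisfying sesquilinearity and the derivation property, and the same reduction of $X^\sigma=X$ to skew-symmetry plus quasi-commutativity via \cref{prp:W:skecom}.

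Step 4, however, contains a genuine gap. The identity \eqref{eq:W:Jqas} is \emph{not} equivalent to $X\square X=0$; it only records the value of $X\square X$ on $a\otimes b\otimes c\otimes z_{1,3}^{-1}z_{2,3}^{-1}$ (this is \cref{lem:boxJqW}), so it gives the implication $X\square X=0\Rightarrow$ (Jacobi, quasi-associativity, Wick) but not the converse. Your list of test elements --- $z_{1,3}^{-1}z_{2,3}^{-1}$, $z_{1,2}^{-1}$, $1_\bbK$ and their $\zeta$-variants --- does not suffice for the converse, because the $\clD_3^{\rmT}$-action (multiplication by $z_{k,l},\zeta_{k,l}^i$ and differentiation) can deepen existing poles but never creates a pole along a new diagonal; hence the generator $z_{1,2}^{-1}z_{1,3}^{-1}z_{2,3}^{-1}$ of $\clO_3^{\star\rmT}$ as a $\clD_3^{\rmT}$-module is unreachable from your test elements, and the vanishing of $X\square X$ on them does not imply $X\square X=0$. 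The paper closes this gap by showing that $(X\square X)(a\otimes b\otimes c\otimes z_{1,2}^{-1}z_{1,3}^{-1}z_{2,3}^{-1})$ is killed by $\pdd_{\lambda_2}-\pdd_{\lambda_1}$ (using \eqref{eq:W:Jqas}) and then proving the residue identities \eqref{eq:W:ResFn} for all $n\in\bbN$: the cases $n=0,1$ follow from quasi-associativity (\cref{lem:ResF01}) and the cases $n\ge 2$ require the \emph{right} Wick formula of \cref{lem:rWick} (\cref{lem:W:circ}, \cref{lem:W:ResFn}). This chain of lemmas --- and in particular the role of the right Wick formula, which you never invoke --- is the substantive content of the converse direction and is missing from your outline.
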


In the rest of this section, we prove \cref{thm:W:VA}. We denote $\wt{V} \ceq \Pi^{N+1}V$ and $\oP \ceq \oPchW{\Pi^{N+1}V}$ for simplicity, and let $p$, $\wt{p}$ be the parity of $V$, $\wt{V}$ respectively. Consider the linear maps $[\cdot_\Lambda\cdot]_X\colon V \otimes V \to V[\Lambda]$ and $\mu_X\colon V \otimes V \to V$ defined by \cref{eq:VAlbra} and \cref{eq:VAprod} for an odd element $X \in \oP(2)_{\ol{1}}$.

\begin{itemize}
\item 
The linear map $[\cdot_\Lambda\cdot]_X\colon V \otimes V \to V[\Lambda]$ has the parity $\ol{N}$: 
In fact, since $X$ is an odd linear map $X\colon \wt{V}^{\otimes 2} \otimes \clO^{\star \rmT}_2 \to \wt{V}_\nabla[\Lambda_k]_{k=1,2}$, 
\begin{align*}
 p(X(a \otimes b \otimes 1_\bbK)) = \wt{p}(X(a \otimes b\otimes 1_\bbK))+\ol{N}+\od
 = \wt{p}(a)+\wt{p}(b)+\ol{N} = p(a)+p(b)+\ol{N}
\end{align*}
holds for $a,b \in V$.
  
\item 
The linear map $\mu_X\colon V \otimes V \to V$ is even: 
This is clear since the residue map $\Res_\Lambda\colon \wt{V}[\Lambda] \to \wt{V}$ has the parity $\ol{N}$. 

\item 
The linear map $[\cdot_\Lambda\cdot]_X$ satisfies (i) in \cref{dfn:W:LCA}: 
This can be checked by a direct calculation similar to the proof of \cref{thm:W:LCA}. 

\item 
The linear map $\mu_X$ satisfies (i) in \cref{dfn:NWVA}: Since 
\begin{align*}
 &X_{\Lambda, -\Lambda-\nabla}(Ta\otimes b\otimes z_{1, 2}^{-1})
 =-\lambda X_{\Lambda, -\Lambda-\nabla}(a\otimes b\otimes z_{1, 2}^{-1})
 -X_{\Lambda, -\Lambda-\nabla}(a\otimes b\otimes z_{1, 2}^{-2}), \\
 &X_{\Lambda, -\Lambda-\nabla}(a\otimes Tb\otimes z_{1, 2}^{-1})
 =(\lambda+T)X_{\Lambda, -\Lambda-\nabla}(a\otimes b\otimes z_{1, 2}^{-1})
 +X_{\Lambda, -\Lambda-\nabla}(a\otimes b\otimes z_{1, 2}^{-2})
\end{align*}
hold for $a,b \in V$, we have $T(ab)=(Ta)b+a(Tb)$. 
Similarly, we can prove $S^i(ab)=(S^ia)b+(-1)^{p(a)}a(S^ib)$. 
\end{itemize}

Note now that there exists the integral of $[\cdot_\Lambda\cdot]_X$. By \cref{lem:intbra}, we have 
\begin{align*}
 \int^\Lambda d\Gamma[a_\Gamma b]_X = (-1)^{p(a)(\ol{N}+\od)+\od} 
 X_{\Lambda, -\Lambda-\nabla}(a \otimes b \otimes z_{1, 2}^{-1}) \quad (a,b \in V).
\end{align*}
Thus we find that the map $X \mto ([\cdot_\Lambda\cdot]_X, \mu_X)$ gives a bijective correspondence between the set $\oP(2)_{\od}$ and the set of all pairs $([\cdot_\Lambda\cdot], \mu)$ of a linear map $[\cdot_\Lambda\cdot]\colon V \otimes V \to V[\Lambda]$ of parity $\ol{N}$ satisfying (i) in \cref{dfn:W:LCA} and an even linear map $\mu\colon V \otimes V \to V$ satisfying (i) in \cref{dfn:NWVA}. 

\begin{itemize}
\item 
The linear maps $[\cdot_\Lambda\cdot]_X$ and $\mu_X$ satisfy the skew-symmetry \eqref{eq:LCA:ssym} and the quasi-commutativity $\eqref{eq:VA:qcom}$ if and only if $X^\sigma=X$ for any $\sigma \in \frS_2$: 
The element $X \in \oP(2)_{\od}$ satisfies $X^\sigma=X$ ($\sigma \in \frS_2$) if and only if 
\begin{align*}
 (-1)^{\wt{p}(a)\wt{p}(b)}X_{\Lambda_2,\Lambda_1}(b \otimes a \otimes z_{2, 1}^{-1})
 =X_{\Lambda_1,\Lambda_2}(a \otimes b \otimes z_{1, 2}^{-1}).
\end{align*}
Since we have, for $a,b \in V$, 
\begin{align*}
 \int^\Lambda d\Gamma[b_\Gamma a]
&= (-1)^{p(b)(\ol{N}+\od)+\od}X_{\Lambda,-\Lambda-\nabla}(b \otimes a \otimes z_{1,2}^{-1})\\
&= (-1)^{p(b)(\ol{N}+\od)}    X_{\Lambda,-\Lambda-\nabla}(b \otimes a \otimes z_{2,1}^{-1}), \\
 \int^{-\Lambda-\nabla} d\Gamma[a_\Gamma b]
&= (-1)^{p(a)(\ol{N}+\od)+\od}X_{-\Lambda-\nabla, \Lambda}(a \otimes b \otimes z_{1,2}^{-1}), 
\end{align*}
by \cref{prp:W:skecom}, the skew-symmetry and the quasi-commutativity are equivalent to $X^\sigma=X$ ($\sigma \in \frS_2$). 
\end{itemize}

In what follows, let $X \in L^1(\oP)_{\od}$. Thus the linear maps $[\cdot_\Lambda\cdot]_X\colon V \otimes V \to V[\Lambda]$ and $\mu_X\colon V \otimes V \to V$ satisfy (i), (ii) in \cref{dfn:W:LCA} and (i), (ii) in \cref{dfn:NWVA}. We need to prove:

\begin{itemize}
\item
The linear maps $[\cdot_\Lambda\cdot]_X\colon V \otimes V \to V[\Lambda]$ and $\mu_X\colon V \otimes V \to V$ satisfy the Jacobi identity \eqref{eq:LCA:Jac}, the quasi-associativity \eqref{eq:VA:qass} and the Wick formula \eqref{eq:VA:Wick} if and only if $X \square X=0$. 
\end{itemize}

First, we have the following lemmas. 

\begin{lem}\label{lem:boxJqW}
Let $X \in L^1\bigl(\oPchW{\Pi^{N+1}V}\bigr)_{\od}$. For any $a,b,c \in V$, we have
\begin{align*}
&\pm(X \square X)(a \otimes b \otimes c \otimes z_{1,3}^{-1} z_{2,3}^{-1}) \\
&=\int^{\Lambda_1}d\Gamma_1\left[a_{\Gamma_1}\left(\int^{\Lambda_2}d\Gamma_2[b_{\Gamma_2}c]\right)\right]
 -(-1)^{(p(a)+\ol{N})(p(b)+\ol{N})}
  \int^{\Lambda_2}d\Gamma_2\left[b_{\Gamma_2}\left(\int^{\Lambda_1}d\Gamma_1[a_{\Gamma_1}c]\right)\right] \\
&\qquad 
 -(-1)^{(p(a)+\ol{N})\ol{N}}
 \int^{\Lambda_1+\Lambda_2}d\Gamma\left[\left(\int^{\Lambda_1}d\Gamma_1[a_{\Gamma_1}b]
-\int^{-\Lambda_2-\nabla}d\Gamma_2[a_{\Gamma_2}b]\right)_{\Gamma}c\right],
\end{align*}
where $\pm \ceq (-1)^{\wt{p}(a)}(-1)^{p(a)(\ol{N}+\od)}(-1)^{p(b)(\ol{N}+\od)}$. 
\end{lem}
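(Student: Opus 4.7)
The plan is to expand $X\square X$ via \eqref{eq:1:square} and evaluate each summand on $a\otimes b\otimes c\otimes z_{1,3}^{-1}z_{2,3}^{-1}$ using the explicit operadic composition of $\oPchW{\Pi^{N+1}V}$ given in \cref{ss:W:VA}. The sum over $(2,1)$-shuffles in $\frS_3$ produces three summands, which, as already organised in the proof of \cref{thm:W:LCA}, read
\[
 X\square X \;=\; X\circ_1 X \;+\; X\circ_2 X \;+\; (X\circ_2 X)^{(1,2)}.
\]

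For $X\circ_2 X$ the inner $X$ groups positions $2,3$; the rational function factorises with $f_2=z_{2,3}^{-1}$ and $f_0=z_{1,3}^{-1}$. By the integral characterisation in \cref{lem:intbra} (equations \eqref{eq:2:intbra}), the inner $X$ applied to $b\otimes c\otimes z_{1,2}^{-1}$ (after relabeling) produces, up to a determined sign, $\int^{\Gamma}d\Gamma'[b_{\Gamma'}c]$; feeding the output together with $a$ and the residual simple pole $z_{1',2'}^{-1}$ into the outer $X_{\Lambda_1,\Lambda_2+\Lambda_3}$ yields the first iterated integral $\int^{\Lambda_1}d\Gamma_1\bigl[a_{\Gamma_1}\bigl(\int^{\Lambda_2}d\Gamma_2[b_{\Gamma_2}c]\bigr)\bigr]$. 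The shuffled variant $(X\circ_2 X)^{(1,2)}$ is identical up to swapping the roles of $a$ and $b$; it contributes the second iterated integral $\int^{\Lambda_2}d\Gamma_2\bigl[b_{\Gamma_2}\bigl(\int^{\Lambda_1}d\Gamma_1[a_{\Gamma_1}c]\bigr)\bigr]$, together with the Koszul sign $(-1)^{(p(a)+\ol N)(p(b)+\ol N)}$ coming from the transposition of $a$ and $b$.

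The subtle term is $X\circ_1 X$: the inner $X$ must group positions $1,2$, yet $z_{1,3}^{-1}z_{2,3}^{-1}$ carries no pole at $z_1=z_2$, so no $z_{1,2}^{-1}$ factor is directly available for the inner $X$. I would use the partial-fraction identity
\[
 z_{1,3}^{-1}z_{2,3}^{-1} \;=\; z_{1,2}^{-1}z_{2,3}^{-1} \;-\; z_{1,2}^{-1}z_{1,3}^{-1}
\]
to split the composition, by linearity, into two pieces each having $f_1=z_{1,2}^{-1}$. In both pieces the inner $X$ produces (up to sign) $\int^{\Gamma}d\Gamma'[a_{\Gamma'}b]$. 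The residual factors $z_{2,3}^{-1}$ and $-z_{1,3}^{-1}$ sit in $f_0$ and, after the specialisation $z_1\mapsto z_2$ combined with the isomorphism of \cref{lem:W:VnLn=VLn-1}, get interpreted as simple poles for the outer $X_{\Lambda_1+\Lambda_2,\Lambda_3}$. The two pieces end up as integrals of the form $\int^{\Lambda_1+\Lambda_2}d\Gamma[(\,\cdot\,)_\Gamma c]$ in which the inner $\Gamma$ is evaluated at $\Lambda_1$ and at $-\Lambda_2-\nabla$ respectively, precisely reproducing the difference $\int^{\Lambda_1}d\Gamma_1[a_{\Gamma_1}b]-\int^{-\Lambda_2-\nabla}d\Gamma_2[a_{\Gamma_2}b]$ inside the third integrand on the right-hand side.

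What remains is sign bookkeeping: the global prefactor $\pm=(-1)^{\wt{p}(a)+p(a)(\ol{N}+\od)+p(b)(\ol{N}+\od)}$ assembles from the parity shift $V\to\wt{V}=\Pi^{N+1}V$, the translation formulas \eqref{eq:VAlbra}--\eqref{eq:VAprod} between $X$ and $[\cdot_\Lambda\cdot]_X$, the permutation sign in the $(1,2)$-shuffle, and the sign rules governing the $\clD_n^{\rmT}$-action on $V_\nabla[\Lambda_k]_{k=1}^n$. The main obstacle is the $X\circ_1 X$ computation, where one must verify that the partial-fraction split does genuinely produce the two \emph{distinct} evaluations $\Gamma=\Lambda_1$ and $\Gamma=-\Lambda_2-\nabla$; the mechanism behind this is the interaction between the operator-valued shift $\Gamma_1=\Lambda_1-\pdd_{Z_1}$ and the $z_1$-dependence of the residual factor $-z_{1,3}^{-1}$ in the second piece, transported through the isomorphism \eqref{eq:LCA:HWn-mod}. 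Once this is established, summing the three evaluated terms and comparing with the stated right-hand side finishes the proof.
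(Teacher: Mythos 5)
Your proposal follows essentially the same route as the paper's proof: the same three-term decomposition $X\square X = X\circ_1 X + X\circ_2 X + (X\circ_2 X)^{(1,2)}$, the same identification of the two $\circ_2$ terms with the iterated integrals, and the same splitting of the $\circ_1$ term via $z_{1,3}^{-1}z_{2,3}^{-1}=z_{1,2}^{-1}z_{2,3}^{-1}-z_{1,2}^{-1}z_{1,3}^{-1}$ (the paper phrases this split through the right $\clD_3^{\rmT}$-module structure, i.e.\ the action of $\pdd_{\lambda_i}-\pdd_{\lambda_j}$, which encodes exactly the same partial-fraction identity). Your diagnosis that the two evaluations $\Gamma_1=\Lambda_1$ versus $-\Lambda_2-\nabla$ arise from the presence or absence of $z_1$-dependence in the residual factor interacting with the shift $\Gamma_1=\Lambda_1-\pdd_{Z_1}$ is precisely the mechanism the paper uses (compare the Taylor-expansion computation in the proof of \cref{lem:W:circ}).
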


\begin{proof}
A direct calculation shows that 
\begin{align*}
 (X \circ_2 X)(a \otimes b \otimes c\otimes z_{1,3}^{-1} z_{2,3}^{-1})
&=\pm \int^{\Lambda_1} d\Gamma_1
  \Bigl[a_{\Gamma_1}\Bigl(\int^{\Lambda_2} d\Gamma_2[b_{\Gamma_2}c]\Bigr)\Bigr], \\
 (X \circ_2 X)^{(1, 2)}(a \otimes b\otimes c\otimes z_{1, 3}^{-1}z_{2,3}^{-1})
&=\pm(-1)^{(p(a)+\ol{N})(p(b)+\ol{N})+\od} \int^{\Lambda_2} d\Gamma_2
 \Bigl[b_{\Gamma_2}\Bigl(\int^{\Lambda_1} d\Gamma_1 [a_{\Gamma_1}c]\Bigr)\Bigr]. 
\end{align*}
Also, we have
\begin{align*}
& (X \circ_1 X)(a \otimes b \otimes c \otimes z_{1, 3}^{-1}z_{2, 3}^{-1})\\
&=(\pdd_{\lambda_2}-\pdd_{\lambda_1})(X \circ_1X)
  (a \otimes b \otimes c \otimes z_{1, 2}^{-1}z_{1, 3}^{-1}z_{2, 3}^{-1})\\
&=(\pdd_{\lambda_3}-\pdd_{\lambda_1})(X \circ_1X)
  (a \otimes b \otimes c \otimes z_{1, 2}^{-1}z_{1, 3}^{-1}z_{2, 3}^{-1})
  -(\pdd_{\lambda_3}-\pdd_{\lambda_2})(X \circ_1X)
  (a \otimes b \otimes c \otimes z_{1, 2}^{-1}z_{1, 3}^{-1}z_{2, 3}^{-1})\\
&=(X \circ_1 X)(a \otimes b \otimes c \otimes z_{1, 2}^{-1}z_{2, 3}^{-1})
 -(X \circ_1 X)(a \otimes b \otimes c \otimes z_{1, 2}^{-1}z_{1, 3}^{-1})
\end{align*}
and 
\begin{align*}
(X \circ_1X)(a \otimes b \otimes c\otimes z_{1, 2}^{-1}z_{2, 3}^{-1})
&=\pm(-1)^{(p(a)+\ol{N})\ol{N}+\od} \int^{\Lambda_1+\Lambda_2} d\Gamma
  \Bigl[\Bigl(\int^{\Lambda_1}d\Gamma_1[a_{\Gamma_1}b]\Bigr)_\Gamma c\Bigr], \\
(X \circ_1X)(a \otimes b \otimes c\otimes z_{1, 2}^{-1}z_{1, 3}^{-1})
&=\pm(-1)^{(p(a)+\ol{N})\ol{N}+\od}\int^{\Lambda_1+\Lambda_2} d\Gamma
  \Bigl[\Bigl(\int^{-\Lambda_2-\nabla}d\Gamma_2[a_{\Gamma_2}b]\Bigr)c\Bigr]. 
\end{align*}
Thus
\begin{align*}
&(X \circ_1 X)(a \otimes b \otimes c \otimes z_{1, 3}^{-1}z_{2, 3}^{-1})\\
&=\pm(-1)^{(p(a)+\ol{N})\ol{N}}\int^{\Lambda_1+\Lambda_2} d\Gamma
 \left[\left(\int^{\Lambda_1} d\Gamma_1 [a_{\Gamma_1}b]
            -\int^{-\Lambda_2-\nabla} d\Gamma_2 [a_{\Gamma_2}b]\right)_{\Gamma}c\right]. 
\end{align*}
The claim is now clear. 
\end{proof}

By \cref{prp:W:Jqas} and \cref{lem:boxJqW}, it is clear that $X\square X=0$ implies the Jacobi identity, the quasi-associativity and the Wick formula. 

Conversely, assume the Jacobi identity, the quasi-associativity and the Wick formula. 
To prove $X \square X=0$, it is sufficient to show that 
\begin{align}\label{eq:W:ResFn}
 \Res_{\Lambda_1}\Res_{\Lambda_2}\bigl(\lambda_1^{-1} \lambda_2^{-1}\pdd_{\lambda_1}^n(X \square X)
  (a \otimes b \otimes c \otimes z_{1,2}^{-1}z_{1,3}^{-1}z_{2,3}^{-1})\bigr) = 0
\end{align}
for all $n \in \bbN$ and $a,b,c \in V$, because we have
\begin{align*}
(\pdd_{\lambda_2}-\pdd_{\lambda_1})(X \square X)
 (a \otimes b \otimes c \otimes z_{1,2}^{-1} z_{1,3}^{-1} z_{2,3}^{-1})
=(X \square X)(a \otimes b \otimes c \otimes z_{1,3}^{-1} z_{2,3}^{-1})=0
\end{align*}
and 
\begin{align*}
& \Res_{\Lambda_1}\Res_{\Lambda_2}\bigl(\lambda_1^{-1}\lambda_2^{-1}
   \pdd_{\lambda_1}^n \theta_1^I \theta_2^J(X \square X)
   (a \otimes b \otimes c \otimes z_{1,2}^{-1} z_{1,3}^{-1} z_{2,3}^{-1})\bigr) \\
&=(-1)^{\#I}(-1)^{\wt{p}(a)\#J} \Res_{\Lambda_1}\Res_{\Lambda_2}\bigl(\lambda_1^{-1}\lambda_2^{-1}
   \pdd_{\lambda_1}^n(X \square X)
   (S^I a \otimes S^J b \otimes c \otimes z_{1, 2}^{-1}z_{1, 3}^{-1}z_{2, 3}^{-1})\bigr)
\end{align*}
for each $n \in \bbN$, $I,J \subset [N]$. 
Let us calculate 
$\Res_{\Lambda_2}\bigl(\lambda_2^{-1}(X \square X)(a \otimes b \otimes c \otimes z_{1,2}^{-1}z_{1,3}^{-1}z_{2,3}^{-1})\bigr)$. 
 
\begin{lem}\label{lem:W:circ}
Let $X \in \oPchK{\Pi^{N+1}V}(2)_{\od}$. For any $a, b, c\in V$, we have
\begin{align*}
 \pm\Res_{\Lambda_2}\bigl(\lambda_2^{-1}(X \circ_1 X)
 (a \otimes b \otimes c \otimes z_{1,2}^{-1}z_{1,3}^{-1}z_{2,3}^{-1})\bigr)
 &\begin{aligned}[t]
  &=\Bigl(\int_{-T}^{\lambda_1}d\lambda_1\int^{\Lambda_1}d\Gamma_1[a_{\Gamma_1}b]\Bigr)c\\
  & \qquad +\int_0^{\lambda_1}d\Lambda\Bigl[
  \Bigl(\int_{-T}^{\lambda_1}d\lambda_1\int^{\Lambda_1}d\Gamma_1[a_{\Gamma_1}b]\Bigr)_\Lambda c\Bigr], 
  \end{aligned}
 \\
 \pm \Res_{\Lambda_2}\bigl(\lambda_2^{-1}(X \circ_2 X)
  (a \otimes b \otimes c \otimes z_{1, 2}^{-1}z_{1, 3}^{-1}z_{2, 3}^{-1})\bigr)
 &\begin{aligned}[t]
  &=-\int^{\Lambda_1}d\Gamma_1[(\lambda_1+T)a_{\Gamma_1}bc]\\
  & \qquad -\int^{\Lambda_1}d\Gamma_1\Bigl[\Bigl(\int_0^{\lambda_1+T}d\lambda a\Bigr)_{\Gamma_1}
    \Bigl(\int_0^{\lambda}d\Lambda[b_\Lambda c]\Bigr)\Bigr], 
  \end{aligned}
 \\
 \pm\Res_{\Lambda_2}\bigl(\lambda_2^{-1}(X \circ_2 X)^{(1,2)}
 (a \otimes b \otimes c \otimes z_{1, 2}^{-1}z_{1, 3}^{-1}z_{2, 3}^{-1})\bigr)
 &=(-1)^{(p(a)+\ol{N})p(b)+\od}
 \Bigl(\int_{\lambda_1}^{\lambda_1+T}d\lambda_1b\Bigr) \int^{\Lambda_1}d\Gamma_1[a_{\Gamma_1}c]. 
\end{align*}
where $\pm \ceq (-1)^{p(b)(\ol{N}+\od)}$. 
\end{lem}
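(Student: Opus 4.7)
The plan is to compute each of the three compositions by unraveling the definition of the partial composition maps $\circ_i$ in the chiral operad $\oPchW{\Pi^{N+1}V}$ and then translating $X$-values into integrals of $\Lambda$-brackets and products via \eqref{eq:VAlbra}, \eqref{eq:VAprod}, and the integral $\int^{\Lambda}d\Gamma[\cdot_\Gamma\cdot]$ of \cref{dfn:W:indef}. For each piece, I would first decompose $f=z_{1,2}^{-1}z_{1,3}^{-1}z_{2,3}^{-1}$ as $f_0 f_1 f_2$ according to the grouping: for $X\circ_1 X$ (grouping $(1,2),(3)$) one takes $f_1=z_{1,2}^{-1}$ and $f_0=z_{1,3}^{-1}z_{2,3}^{-1}$, whose restriction under $Z_1\mto Z_2$ becomes $z_{1,2}^{-2}$ in the outer variables $(Z_1^{\text{out}}=Z_2,\,Z_2^{\text{out}}=Z_3)$; for $X\circ_2 X$ (grouping $(1),(2,3)$) one takes $f_2=z_{2,3}^{-1}$ and $f_0=z_{1,2}^{-1}z_{1,3}^{-1}$, restricting under $Z_3\mto Z_2$ to $z_{1,2}^{-2}$ in the outer variables; and $(X\circ_2X)^{(1,2)}$ is obtained from the second via the right action of $(1,2)\in\frS_3$, which swaps $\Lambda_1\leftrightarrow\Lambda_2$ and $a\leftrightarrow b$ and contributes the sign $-(-1)^{\wt p(a)\wt p(b)}$ originating from $(1,2)\cdot z_{1,2}^{-1}=-z_{1,2}^{-1}$.

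The inner application of $X$ converts its singular factor via the identity $X_{\Gamma,-\Gamma-\nabla}(u\otimes v\otimes z_{1,2}^{-1})=(-1)^{p(u)(\ol N+\od)+\od}\int^\Gamma d\Gamma'[u_{\Gamma'}v]$. The outer $X$-application then acts on an argument of the form $\widetilde{u}\otimes c\otimes z_{1,2}^{-2}$; to handle the second-order pole I would invoke sesquilinearity in the rearranged form
\[
 X_{\Lambda,-\Lambda-\nabla}(u\otimes v\otimes z_{1,2}^{-2})
 =-\lambda X_{\Lambda,-\Lambda-\nabla}(u\otimes v\otimes z_{1,2}^{-1})
 -X_{\Lambda,-\Lambda-\nabla}(Tu\otimes v\otimes z_{1,2}^{-1}),
\]
and translate each simple-pole term back into an integrated $\Lambda$-bracket. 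Taking $\Res_{\Lambda_2}(\lambda_2^{-1}\cdot)$ then extracts either a product $\mu_X$ (via \eqref{eq:VAprod}) or a $\Lambda$-bracket $[\cdot_{\Lambda_1}\cdot]$ (via \eqref{eq:VAlbra}) at the outer level, depending on whether the corresponding term carries a $\lambda_2^{-1}$-singularity. After a further $\lambda_1$-integration that repackages $T$-actions into $\int_{-T}^{\lambda_1}$- or $\int_{\lambda_1}^{\lambda_1+T}$-integrals, one recovers the two-term right-hand sides for the $X\circ_1 X$ and $X\circ_2 X$ pieces, and the single-term expression for the permuted piece.

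The main obstacle will be careful sign bookkeeping. One must track (i) the Koszul signs from permuting $a,b,c$ and the rational factors of $f$; (ii) the parity shift $\wt p=p+\ol{N+1}$ induced by $\Pi^{N+1}$, which relates $\wt p(x)$ and $p(x)$ whenever $\wt V$-valued $X$-outputs are converted to $V$-valued brackets; (iii) the odd parity of $X$ itself, producing signs whenever $X$ is commuted past another odd quantity; (iv) the combined sign from $(1,2)\cdot z_{1,2}^{-1}=-z_{1,2}^{-1}$ together with the permuted Koszul sign $(-1)^{\wt p(a)\wt p(b)}$ in the third piece; and (v) the signs arising from applying sesquilinearity to reduce $z_{1,2}^{-2}$. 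The factored-out sign $(-1)^{p(b)(\ol N+\od)}$ on the left-hand sides is tuned to absorb much of this, and verifying that every residual sign cancels against the stated right-hand sides will be the bulk of the technical work.
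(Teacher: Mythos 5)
There is a genuine gap, and it sits at the heart of the computation. Your plan treats the operadic composition as ``apply the inner $X$ to $(a,b,z_{1,2}^{-1})$, restrict $f_0=z_{1,3}^{-1}z_{2,3}^{-1}$ to the partial diagonal to get a double pole in the outer variables, then kill the double pole with one application of sesquilinearity.'' But in $\oPchW{}$ the inner factor is evaluated at the \emph{shifted} supervariable $\Gamma_1=\Lambda_1-\pdd_{Z_1}$, and the operators $\pdd_{Z_1}$ act on $f_0$ \emph{before} the restriction $z_2=z_1$. Writing the inner output as $F(\Lambda_1-\pdd_{Z_1})=e^{-\pdd_{Z_1}\pdd_{\Lambda_1}}F(\Lambda_1)$, the $n$-th Taylor term pairs with $(\pdd_{z_1}^{n}z_{1,3}^{-1}z_{2,3}^{-1})|_{z_2=z_1}\propto z_{1,3}^{-n-2}$, so poles of \emph{every} order appear, not just a double pole. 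The paper resums this series using $(\pdd_{z_1}^{n}z_{1,3}^{-1}z_{2,3}^{-1})|_{z_2=z_1}=\tfrac{-1}{n+1}\pdd_{z_1}^{n+1}z_{1,3}^{-1}$ into $-\int_0^{\pdd_{z_1}}d\Lambda\,\theta^{[N]}F(\Lambda_1-\Lambda)\otimes c\otimes z_{1,3}^{-1}$, and it is exactly this resummation (with $\pdd_{z_1}$ then converted to $\lambda_1+\lambda_2+T$ via the right $\clD^{\rmT}$-action) that produces the definite integrals $\int_{-T}^{\lambda_1}$, $\int_0^{\lambda_1}$, $\int_0^{\lambda_1+T}$, $\int_{\lambda_1}^{\lambda_1+T}$ in the right-hand sides. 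Your route, which sees only a single second-order pole, can generate at most finitely many correction terms of the form $-\lambda(\cdots)-(T\cdot\,\cdots)$ and therefore cannot reproduce the stated answer; the phrase ``a further $\lambda_1$-integration that repackages $T$-actions'' has no source in your setup.

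The identity you quote for reducing $z_{1,2}^{-2}$ via $\clD^{\rmT}$-equivariance is correct and is used in the paper, but only in the easier verification that $\mu_X$ is a derivation with respect to $T$ and $S^i$, not as the engine of this lemma. The rest of your outline (the decomposition $f=f_0f_1f_2$ per grouping, extracting the inner bracket via $X_{\Gamma,-\Gamma-\nabla}(u\otimes v\otimes z_{1,2}^{-1})=\pm\int^{\Gamma}d\Gamma'[u_{\Gamma'}v]$, obtaining the permuted term from the right $\frS_3$-action, and taking $\Res_{\Lambda_2}(\lambda_2^{-1}\cdot)$ at the end) matches the paper's strategy; what is missing is the treatment of the $\Lambda_k-\pdd_{Z_k}$ shift against the poles of $f_0$, which is the one step that actually generates the integral structure of the answer.
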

 
\begin{proof}
First, we get 
\begin{align*}
 &(X\circ_1X)(a\otimes b\otimes c\otimes  z_{1, 2}^{-1}z_{1, 3}^{-1}z_{2, 3}^{-1})\\
 &=X_{\Lambda_1+\Lambda_2, \Lambda_3}(X_{\Lambda_1\pdd_{Z_1}, -(\Lambda_1-\pdd_{Z_1})-\nabla}(a\otimes b\otimes z_{1, 2}^{-1})\otimes c\otimes z_{1, 3}^{-1}z_{2, 3}^{-1}|_{z_2=z_1})\\
 &=(-1)^{p(a)(\ol{N}+\od)+1}X_{\Lambda_1+\Lambda_2, \Lambda_3}
   \Bigl(\int^{\Lambda_1-\pdd_{Z_1}}d\Gamma_1[a_{\Gamma_1}b] \otimes c \otimes 
         z_{1, 3}^{-1}z_{2, 3}^{-1}|_{z_2=z_1}\Bigr). 
\end{align*}
For $F(\Lambda) \in V[\Lambda]$, we have
\begin{align*}
 F(\Lambda_1-\pdd_{Z_1})\otimes c\otimes z_{1, 3}^{-1}z_{2, 3}^{-1}|_{z_2=z_1}
 &=e^{-\pdd_{Z_1}\pdd_{\Lambda_1}}F(\Lambda_1)\otimes c\otimes z_{1, 3}^{-1}z_{2, 3}^{-1}|_{z_2=z_1}\\
 &=\sum_{n\ge 0}\frac{(-1)^n}{n!}\pdd_{\lambda_1}^nF(\Lambda_1)\otimes c\otimes (\pdd_{z_1}^nz_{1, 3}^{-1}z_{2, 3}^{-1})|_{z_2=z_1}\\
 &=\sum_{n\ge 0}\frac{(-1)^n}{n!}\pdd_{\lambda_1}^nF(\Lambda_1)\otimes c\otimes \frac{-1}{n+1}\pdd_{z_1}^{n+1}z_{1, 3}^{-1}\\
 &=-\int_0^{\pdd_{z_1}}d\Lambda\,\theta^{[N]}e^{-\Lambda\cdot\pdd_{\Lambda_1}}F(\Lambda_1)\otimes c\otimes z_{1, 3}^{-1}\\
 &=-\int_0^{\pdd_{z_1}}d\Lambda\, \theta^{[N]}F(\Lambda_1-\Lambda)\otimes c\otimes z_{1, 3}^{-1}. 
\end{align*}
 Thus
\begin{align*}
 &(X\circ_1X)(a\otimes b\otimes c\otimes  z_{1, 2}^{-1}z_{1, 3}^{-1}z_{2, 3}^{-1})\\
 &=(-1)^{p(a)(\ol{N}+\od)}X_{\Lambda_1+\Lambda_2, \Lambda_3}\Bigl(\int_0^{\pdd_{z_1}}d\Lambda\,\theta^{[N]}\int^{\Lambda_1-\Lambda}d\Gamma_1[a_{\Gamma_1}b]\otimes c\otimes z_{1, 3}^{-1}\Bigr)\\
 &=(-1)^{p(a)(\ol{N}+\od)}X_{\Lambda_1+\Lambda_2, \Lambda_3}\Bigl(\Bigl(\int_0^{\lambda_1+\lambda_2+T}d\Lambda\,\theta^{[N]}\int^{\Lambda_1-\Lambda}d\Gamma_1[a_{\Gamma_1}b]\Bigr)\otimes c\otimes z_{1, 3}^{-1}\Bigr)\\
 &=(-1)^{p(a)(\ol{N}+\od)+\od}X_{\Lambda_1+\Lambda_2, \Lambda_3}\Bigl(\Bigl(\int_{\lambda_1}^{-\lambda_2-T}d\lambda_1\int^{\Lambda_1}d\Gamma_1[a_{\Gamma_1}b]\Bigr)\otimes c\otimes z_{1, 3}^{-1}\Bigr)\\
 &=\pm\int^{\Lambda_1+\Lambda_2}d\Gamma\Bigl[\Bigl(\int^{\lambda_1}_{-\lambda_2-T}d\lambda_1\int^{\Lambda_1}d\Gamma_1[a_{\Gamma_1}b]\Bigr)_\Gamma c\Bigr], 
\end{align*}
which yields the first identity. 
Similarly, one can prove the third identity. Also, we have
\begin{align*}
&\Res_{\Lambda_2}\bigl(\lambda_2^{-1}(X \circ_2 X)
 (a \otimes b \otimes c \otimes z_{1, 2}^{-1}z_{1, 3}^{-1}z_{2, 3}^{-1})\bigr)\\
&=\pm(-1)^{p(a)(\ol{N}+\od)}
  X_{\Lambda_1, -\Lambda_1-\nabla}\Bigl(a \otimes 
   \Res_{\Lambda_2}\Bigl(\lambda_2^{-1}\int^{\Lambda_2-\pdd_{Z_2}}d\Gamma_2 [b_{\Gamma_2}c]\Bigr)
   \otimes z_{1, 2}^{-1}z_{1, 3}^{-1}|_{z_2=z_3}\Bigr). 
\end{align*}
Since a direct calculation shows that
\begin{align*}
\Res_{\Lambda_2}\Bigl(\lambda_2^{-1}\int^{\Lambda-\pdd_{Z_2}}d\Gamma_2\, [b_{\Gamma_2}c]\Bigr)
=bc+\int_0^{-\pdd_{z_2}}d\Lambda\, [b_\Lambda c], 
\end{align*}
we obtain
\begin{align*}
&\pm\Res_{\Lambda_2}\bigl(\lambda_2^{-1}(X \circ_2 X)
 (a \otimes b \otimes c \otimes z_{1, 2}^{-1}z_{1, 3}^{-1}z_{2, 3}^{-1})\bigr)\\
&=-\int^{\Lambda_1} d\Gamma_1 [(\lambda_1+T)a_{\Gamma_1}bc]
  -\int^{\Lambda_1}d\Gamma_1 
   \Bigl[\Bigl(\int_0^{\lambda_1+T}d\Lambda a\Bigr)_{\Gamma_1}
         \Bigl(\int_0^{\lambda}d\lambda[b_\Lambda c]\Bigr)\Bigr]. 
\end{align*}
\end{proof}

For $n \in \bbN$, let us denote
\begin{align}\label{eq:W:Fn}
 F_n(a\otimes b\otimes c) \ceq (-1)^{p(b)(\ol{N}+\od)} \pdd_{\lambda_1}^n \Res_{\Lambda_2}
 \bigl(\lambda_2^{-1}(X \square X)(a \otimes b \otimes c \otimes z_{1,2}^{-1}z_{1,3}^{-1}z_{2,3}^{-1})\bigr).
\end{align}
By \cref{lem:W:circ}, we have
\begin{align}\label{eq:W:F0}
\begin{split}
  F_0(a\otimes b\otimes c)
&=\Bigl(\int_{-T}^{\lambda_1}d\lambda_1\int^{\Lambda_1}d\Gamma_1[a_{\Gamma_1}b]\Bigr)c
 +\int_0^{\lambda_1}d\Lambda\Bigl[
   \Bigl(\int_{-T}^{\lambda_1}\int^{\Lambda_1}d\Gamma_1[a_{\Gamma_1}b]\Bigr)_\Lambda c\Bigr] \\
 &\quad 
 -\int^{\Lambda_1}d\Gamma_1[(\lambda_1+T)a_{\Gamma_1}bc]
 -\int^{\Lambda_1}d\Gamma_1\Bigl[\Bigl(\int_0^{\lambda_1+T}d\Lambda a\Bigr)_{\Gamma_1}
   \Bigl(\int_0^{\lambda_2}d\lambda[b_\Lambda c]\Bigr)\Bigr] \\
 &\quad 
 -(-1)^{(p(a)+\ol{N})p(b)} \Bigl(\int_{\lambda_1}^{\lambda_1+T}d\lambda_1b\Bigr)
   \int^{\Lambda_1}d\Gamma_1[a_{\Gamma_1}c]. 
\end{split}
\end{align} 

\begin{lem}\label{lem:ResF01}
For $n=0,1$ and $a,b,c \in V$, we have
\begin{align*}
\Res_{\Lambda_1}\Res_{\Lambda_2}\bigl(\lambda_1^{-1}
 \lambda_2^{-1}\pdd_{\lambda_1}^n(X\square X)(a\otimes b\otimes c\otimes 
 z_{1, 2}^{-1}z_{1, 3}^{-1}z_{2, 3}^{-1})\bigr)=0
\end{align*}
\end{lem}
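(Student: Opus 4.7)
My plan is to treat the cases $n=0$ and $n=1$ separately, using the explicit expression for $F_0$ from \eqref{eq:W:F0} as the starting point, and obtaining $F_1$ by differentiating with respect to $\lambda_1$ before taking the outer residue.

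For $n=0$, I would apply $\Res_{\Lambda_1}(\lambda_1^{-1}\cdot)$ term-by-term to the expression \eqref{eq:W:F0}. Each appearance of $\int^{\Lambda_1}d\Gamma_1[a_{\Gamma_1}b]$ collapses to the product $ab$ via the defining property $\Res_{\Lambda_1}(\lambda_1^{-1}\int^{\Lambda_1}d\Gamma_1[a_{\Gamma_1}b])=ab$ from \cref{lem:intbra}, while the remaining single integrals of the form $\int_{-T}^{\lambda_1}d\lambda_1 \cdots$ either evaluate to ordinary products like $(ab)c$ and $a(bc)$, or to terms of the form $(\int_0^T d\Lambda a)[b_\Lambda c]$ using sesquilinearity to convert the $\lambda_1+T$ factors into shifted limits of integration. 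Collecting all the surviving terms, I expect the residue of $F_0$ to reassemble into exactly the quasi-associativity relation \eqref{eq:VA:qass} (up to an overall sign): namely $(ab)c - a(bc) - (\int_0^T d\Lambda a)[b_\Lambda c] - (-1)^{p(a)p(b)}(\int_0^T d\Lambda b)[a_\Lambda c]$, which is zero by the standing assumption.

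For $n=1$, I would first commute $\pdd_{\lambda_1}$ past $\Res_{\Lambda_2}(\lambda_2^{-1}\cdot)$ and apply it to each of the three expressions computed in \cref{lem:W:circ}. Derivatives of iterated integrals $\int^{\Lambda_1}d\Gamma_1$ simplify to $\Lambda$-brackets via $\pdd_{\lambda_1}\int^{\Lambda_1}d\Gamma_1[a_{\Gamma_1}b]=[a_{\Lambda_1}b]$, while the outer integrals in $\lambda_1$ produce their integrands at the upper limit. After taking $\Res_{\Lambda_1}(\lambda_1^{-1}\cdot)$ (which extracts the coefficient of $\theta_1^{[N]}$ at $\lambda_1=0$) and using sesquilinearity to cancel the residual differential operators, the surviving terms should reassemble into an identity in $V$ which is precisely the Wick formula \eqref{eq:VA:Wick} evaluated at $\lambda=0$, $\theta^i = 0$, applied to the triple $(a,b,c)$. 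Since the Wick formula is assumed, this residue also vanishes.

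The main obstacle will be the sign bookkeeping and the careful tracking of which integration variable is which when nested integrals are unfolded. In particular, the second term of \eqref{eq:W:F0}, where an outer integral $\int_0^{\lambda_1}d\Lambda$ is taken of a $\Lambda$-bracket whose first argument is itself an integrated expression, requires using the integral-exchange identity (of the form in the proof of \cref{lem:lsym}) to move $\int_0^{\lambda_1}d\Lambda$ past the $\Lambda$-bracket; once this is done, the terms pair up with those coming from $X\circ_2 X$ and $(X\circ_2 X)^{(1,2)}$ so that only quasi-associativity/Wick-formula combinations survive. Given the explicit formulas already established, the proof is essentially a direct (though lengthy) computation, and I would handle the $n=0$ case first since it is the cleaner of the two and sets up the needed manipulations for $n=1$.
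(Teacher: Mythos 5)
Your overall strategy---computing $\Res_{\Lambda_1}(\lambda_1^{-1}F_n(a\otimes b\otimes c))$ term by term from the explicit formulas of \cref{lem:W:circ} and \eqref{eq:W:F0} and matching the outcome against the standing axioms---is the same as the paper's, but both of your concrete identifications are incorrect, so the argument does not close as written. For $n=0$ the residue does \emph{not} reassemble into the quasi-associativity relation \eqref{eq:VA:qass} for the triple $(a,b,c)$: the factors $(\lambda_1+T)$ and the integration limits $\int_{-T}^{\lambda_1}$, $\int_{\lambda_1}^{\lambda_1+T}$ appearing in \eqref{eq:W:F0} survive the residue as extra applications of $T$, and what one actually obtains is
\[
 \Bigl(((Ta)b)c-(Ta)(bc)-\Bigl(\int_0^Td\Lambda\, Ta\Bigr)[b_\Lambda c]
   -(-1)^{p(a)p(b)}\Bigl(\int_0^Td\Lambda\, b\Bigr)[Ta_\Lambda c]\Bigr)
\]
plus $(-1)^{p(a)p(b)}$ times the analogous expression with $a$ and $b$ interchanged; this vanishes because it is quasi-associativity applied to the triples $(Ta,b,c)$ and $(Tb,a,c)$, not to $(a,b,c)$.

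The $n=1$ case is more seriously misidentified. Applying $\pdd_{\lambda_1}$ to \eqref{eq:W:F0} and then taking $\Res_{\Lambda_1}(\lambda_1^{-1}\cdot)$ yields exactly $(ab)c-a(bc)-\bigl(\int_0^Td\Lambda\, a\bigr)[b_\Lambda c]-(-1)^{p(a)p(b)}\bigl(\int_0^Td\Lambda\, b\bigr)[a_\Lambda c]$, i.e.\ the quasi-associativity identity for $(a,b,c)$---the identity you had assigned to $n=0$. The Wick formula plays no role in \cref{lem:ResF01}: it enters (in the ``right'' form of \cref{lem:rWick}) only for $n\ge 2$, which is the content of the separate \cref{lem:W:ResFn}. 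A specialization of \eqref{eq:VA:Wick} at $\lambda=0$ could not be the right target in any case, since the term $\int_0^\lambda d\Gamma[[a_\Lambda b]_\Gamma c]$ contributes nothing there, while the $X\circ_1 X$ piece of $F_1$ produces the genuinely nontrivial terms $\bigl(\int^{\Lambda_1}d\Gamma_1[a_{\Gamma_1}b]\bigr)c$ and the nested integral against $c$. A careful execution of your computation would eventually land on the correct identities (and the lemma then follows from quasi-associativity alone), but as proposed the proof asserts the wrong outcome in both cases and, for $n=1$, invokes an axiom that is not the one needed.
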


\begin{proof}
Since we get
\begin{align*}
 &\Res_{\Lambda_1}(\lambda_1^{-1}F_0(a\otimes b\otimes c))\\
 &=\Bigl(((Ta)b)c-(Ta)(bc)-\Bigl(\int_0^Td\Lambda Ta\Bigr)[b_\Lambda c]
   -(-1)^{p(a)p(b)}\Bigl(\int_0^Td\Lambda b\Bigr)[Ta_\Lambda c]\Bigr) \\
 &\quad 
  +(-1)^{p(a)p(b)}\Bigl(((Tb)a)c-(Tb)(ac)-\Bigl(\int_0^Td\Lambda Tb\Bigr)
   [a_\Lambda c]-(-1)^{p(a)p(b)}\Bigl(\int_0^Td\Lambda a\Bigr)[Tb_\Lambda c]\Bigr), 
\end{align*}
the equation \eqref{eq:W:ResFn} for $n=0$ follows from the quasi-associativity. Also, by \eqref{eq:W:F0}
\begin{align*}
 F_1(a\otimes b\otimes c)
 &=\Bigl(\int^{\Lambda_1}d\Gamma_1[a_{\Gamma_1}b]\Bigr)c
 +\int_0^{\lambda_1}d\Lambda\Bigl[\Bigl(\int^{\Lambda_1}d\Gamma_1[a_{\Gamma_1}b]\Bigr)_\Lambda c\Bigr]\\
 &\quad -\int^{\Lambda_1}d\Gamma_1[a_{\Gamma_1}bc]
 -\int^{\Lambda_1}d\Gamma_1\Bigl[\Big(\int_0^{\lambda_1+T}d\Lambda a\Bigr)_{\Gamma_1}[b_{\Lambda}c]\Bigr]\\
 &\quad -(-1)^{(p(a)+\ol{N})p(b)}\Bigl(\int_{\lambda_1}^{\lambda_1+T}d\lambda_1 b\Bigr)[a_{\Lambda_1}c],
\end{align*}
thus we get
\begin{align*}
 \Res_{\Lambda_1}(\lambda_1^{-1}F_1(a\otimes b\otimes c))
 =(ab)c-a(bc)-\Bigl(\int_0^Td\Lambda a\Bigr)[b_\Lambda c]
  -(-1)^{p(a)p(b)}\Bigl(\int_0^Td\Lambda b\Bigr)[a_\Lambda c]. 
\end{align*}
Hence the equation \eqref{eq:W:ResFn} for $n=1$ follows from the quasi-associativity.
\end{proof}

To show \eqref{eq:W:ResFn} for $n \ge 2$, we expand $[\cdot_\Lambda \cdot]$ in the form
\begin{align*}
 [a_\Lambda b]=\sum_{m \in \bbN, \, I \subset [N]} \Lambda^{m|I} a_{m|I}b \quad (a, b\in V). 
\end{align*}
We also denote $a_{m|N}b \ceq a_{m|[N]}b$. Using induction on $n$, we get for any $n\in\bbN$ that 
\begin{align*}
\begin{split}
  F_{n+2}(a \otimes b \otimes c)
&=\sum_{m\ge n}n!\binom{m}{n}\lambda_1^{m-n}
  \Bigl(\int^{\Lambda_1}d\Gamma_1[a_{\Gamma_1}b]\Bigr)_{m|N}c
 +\sum_{k=0}^{n-1}k!\binom{m}{k}\binom{n}{k}
  \lambda_1^{m-k}\pdd_{\lambda_1}^{n-k-1}[a_{\Lambda_1}b]_{m|N}c\\
 &\quad 
 -\sum_{m\ge n}n!\binom{m}{n}\int^{\Lambda_1}d\Gamma_1[(\lambda_1+T)^{m-n}a_{\Gamma_1}(b_{m|N}c)]
 -\sum_{k=0}^{n-1}k!\pdd_{\lambda_1}^{n-k-1}[a_{\Lambda_1}(b_{k|N}c)] \\
 &\quad 
 -(-1)^{(p(a)+\ol{N})p(b)}\sum_{m \ge n, \, I\subset [N]}
  (-1)^{p(b)\ol{\# I}}n!\binom{m}{n}((\lambda_1+T)^{m-n}\theta_1^Ib)(a_{m|I}c).
\end{split}
\end{align*}

\begin{lem}\label{lem:W:ResFn}
For $n \ge 2$ and $a,b,c \in V$, we have
\begin{align*}
 \Res_{\Lambda_1}\Res_{\Lambda_2}\bigl(\lambda_1^{-1}\lambda_2^{-1} \pdd_{\lambda_1}^n(X \square X)
  (a \otimes b \otimes c \otimes z_{1,2}^{-1}z_{1,3}^{-1}z_{2,3}^{-1})\bigr) = 0.
\end{align*}
\end{lem}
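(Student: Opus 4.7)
The plan is to reduce the claim to a direct computation: apply $\Res_{\Lambda_1}(\lambda_1^{-1}\cdot)$ to the explicit closed-form expression for $F_{n+2}(a\otimes b\otimes c)$ displayed immediately before the statement, and then identify the resulting scalar identity in $V$ as a consequence of the three $N_W=N$ SUSY vertex algebra axioms (Jacobi identity, quasi-associativity, Wick formula), which are standing assumptions in this half of the proof of \cref{thm:W:VA}.

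More concretely, the residue $\Res_{\Lambda_1}(\lambda_1^{-1}\cdot)$ extracts the coefficient of $\lambda_1^0\theta_1^{[N]}$. Applied to each of the five summands defining $F_{n+2}$, it produces an expression built from the $n$-products $a_{m|I}b$, the operator $T$, and the operators $S^i$, with no remaining dependence on $\Lambda_1$. The first two summands, involving $\lambda_1^{m-n}$ and $\lambda_1^{m-k}\pdd_{\lambda_1}^{n-k-1}$, contribute terms of the form $(a_{m|N}b)_{k|N}c$ and products of lower-order derivatives of $[a_{\Lambda_1}b]$ evaluated along with $c$. The third and fourth summands, carrying integrals $\int^{\Lambda_1}d\Gamma_1$, contribute terms of the form $a_{k|N}(b_{m|N}c)$ and corresponding derivative corrections. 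The fifth summand contributes terms of the form $b_{\ell|I}(a_{m|I^c}c)$ weighted by $(\lambda_1+T)^{m-n}$ and $\theta_1^I$.

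Next, by matching coefficients in the identity \eqref{eq:W:Jqas} established in \cref{prp:W:Jqas}, the scalar relation produced is precisely the $\lambda_1^0\theta_1^{[N]}$-coefficient of the polynomial identity
\[
 \pdd_{\lambda_1}^{n}\Res_{\Lambda_2}\bigl(\lambda_2^{-1}(\text{LHS}-\text{RHS of \eqref{eq:W:Jqas}})\bigr)=0,
\]
whose vanishing is equivalent, via \cref{prp:W:Jqas}, to the conjunction of the Jacobi identity, quasi-associativity, and Wick formula that we have assumed. Thus, tracking all signs and binomial coefficients through, the residue of $F_{n+2}$ is expressed as a combination of terms which, term-by-term, equate to coefficients of the axiomatic identities and hence vanish.

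The main obstacle is the bookkeeping: keeping the Koszul signs, the combinatorial factors $n!\binom{m}{n}$, and the derivative terms $\pdd_{\lambda_1}^{n-k-1}[a_{\Lambda_1}b]_{m|N}c$ aligned so that the contributions from summands $2$ and $4$ (high-derivative correction terms) cancel against the Wick-formula contributions hidden in summands $3$ and $5$ (carrying $\theta_1^I$ for proper subsets $I\subsetneq[N]$). An alternative and perhaps cleaner route is induction on $n$: using the sesquilinearity relation $\pdd_{\lambda_1}(\lambda_1F_{n+1})=(n+1)F_{n+1}+\lambda_1 F_{n+2}$ together with the $S^i$-covariance of $X\square X$, one can express $\Res_{\Lambda_1}(\lambda_1^{-1}F_{n+2})$ in terms of $\Res_{\Lambda_1}(\lambda_1^{-1}F_{n+1})$ plus a correction which vanishes by the Jacobi identity alone. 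The base cases $n=0,1$ have already been established in \cref{lem:ResF01}, so only the inductive step needs to be verified, which concentrates the combinatorial work into a single uniform identity rather than five case-by-case manipulations.
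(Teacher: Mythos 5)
Your overall strategy --- take $\Res_{\Lambda_1}(\lambda_1^{-1}\cdot)$ of the displayed closed form of $F_{n+2}$ and reduce the resulting scalar identity to the vertex algebra axioms --- is the same as the paper's. But your central identification is wrong, and it hides the two steps that carry the actual content. You claim the residue of $F_{n+2}$ is ``precisely the $\lambda_1^0\theta_1^{[N]}$-coefficient of $\pdd_{\lambda_1}^{n}\Res_{\Lambda_2}(\lambda_2^{-1}(\mathrm{LHS}-\mathrm{RHS}\text{ of }\eqref{eq:W:Jqas}))$.'' By \cref{lem:boxJqW}, $\mathrm{LHS}-\mathrm{RHS}$ of \eqref{eq:W:Jqas} is (up to sign) $(X\square X)(a\otimes b\otimes c\otimes z_{1,3}^{-1}z_{2,3}^{-1})$, i.e.\ the image of $(X\square X)(a\otimes b\otimes c\otimes z_{1,2}^{-1}z_{1,3}^{-1}z_{2,3}^{-1})$ under $\pdd_{\lambda_2}-\pdd_{\lambda_1}$, whereas the quantities $\Res_{\Lambda_1}\Res_{\Lambda_2}(\lambda_1^{-1}\lambda_2^{-1}\pdd_{\lambda_1}^n(X\square X)(\cdots z_{1,2}^{-1}z_{1,3}^{-1}z_{2,3}^{-1}))$ are exactly the ``constants of integration'' that $(\pdd_{\lambda_2}-\pdd_{\lambda_1})$ kills. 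No coefficient extraction from \eqref{eq:W:Jqas} can produce them --- that is the whole reason \cref{lem:ResF01} and \cref{lem:W:ResFn} are needed as separate statements rather than being corollaries of \cref{prp:W:Jqas}. If your identification were correct, the quasi-associativity input in the $n=0,1$ cases and the Wick input here would be superfluous, which they are not.

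What the computation actually yields is $n!\,\Res_{\Lambda}(\lambda^{-n-1}(\cdot))$ applied to the defect of the \emph{right} Wick formula $[ab_\Lambda c]=(-1)^{p(a)\ol{N}}(e^{\nabla\cdot\pdd_\Lambda}a)[b_\Lambda c]+\cdots$, and reaching that form requires two nontrivial moves you do not supply: (a) the leftover double sum $\sum_{k=0}^{n-1}k!(n-k-1)!\bigl(a_{n-k-1|N}(b_{k|N}c)-\binom{n}{k}(a_{n-k-1|N}b)_{k|N}c\bigr)$ must be recognized, via the Jacobi identity, as $n!$ times $\Res_{\Lambda_1}\bigl(\lambda_1^{-n-1}\int_0^{\lambda_1}d\Lambda_2[b_{\Lambda_2}[a_{\Lambda_1-\Lambda_2}c]]\bigr)$, which is the missing integral term of the right Wick formula; and (b) one must then invoke \cref{lem:rWick} to pass from the Wick formula to its right-handed version. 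Your alternative inductive route cannot repair this: the relation $\pdd_{\lambda_1}(\lambda_1F_{n+1})=F_{n+1}+\lambda_1F_{n+2}$ (not $(n+1)F_{n+1}+\lambda_1F_{n+2}$) does not relate $\Res_{\Lambda_1}(\lambda_1^{-1}F_{n+2})$ to $\Res_{\Lambda_1}(\lambda_1^{-1}F_{n+1})$, since these extract independent coefficients of $(X\square X)$; and an inductive step using ``the Jacobi identity alone,'' built on base cases that use only quasi-associativity, could never yield a conclusion that genuinely requires the Wick formula, as the $n\ge 2$ cases do.
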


\begin{proof}
Let us use the notation $F_n$ in \eqref{eq:W:Fn}. First, we have 
\begin{align*}
&\Res_{\Lambda_1}(\lambda_1^{-1}F_{n+2}(a\otimes b\otimes c))\\
&=n!\Res_{\Lambda}\bigl(\lambda^{-n-1}
 ([ab_{\Lambda}c]
 -(-1)^{p(a)\ol{N}}(e^{\nabla\cdot\pdd_{\Lambda}}a)[b_\Lambda c]
 -(-1)^{(p(a)+\ol{N})p(b)}(e^{\nabla\cdot\pdd_{\Lambda}}b)[a_\Lambda c])\bigr) \\
&\quad 
-\sum_{k=0}^{n-1}k!(n-k-1)!\Bigl(a_{n-k-1|N}(b_{k|N}c)-\binom{n}{k}(a_{n-k-1|N}b)_{k|N}c\Bigr).
\end{align*}
Since the Jacobi identity yields
\begin{align*}
 [a_{\Lambda_1-\Lambda_2}[b_{\Lambda_2}c]] = 
 (-1)^{(p(a)+\ol{N})\ol{N}} [[a_{\Lambda_1-\Lambda_2}b]_{\Lambda_1}c] + 
 (-1)^{(p(a)+\ol{N})(p(b)+\ol{N})}[b_{\Lambda_2}[a_{\Lambda_1-\Lambda_2}c]], 
\end{align*}
we obtain
\begin{align*}
& (-1)^{(p(a)+\ol{N})p(b)} \Res_{\Lambda_1} \Bigl(\lambda_1^{-n-1}
   \int_0^{\lambda_1}d\Lambda_2[b_{\Lambda_2}[a_{\Lambda_1-\Lambda_2}c]\Bigr) \\
&=(-1)^{(p(a)+\ol{N})\ol{N}} \Res_{\Lambda_1} \Bigl(\lambda_1^{-n-1}
   \int_0^{\lambda_1}d\Lambda_2[a_{\Lambda_1-\Lambda_2}[b_{\Lambda_2}c]]\Bigr)
  -\Res_{\Lambda_1} \Bigl(\lambda_1^{-n-1}
   \int_0^{\lambda_1}d\Lambda_2[[a_{\Lambda_1-\Lambda_2}b]_{\Lambda_1}c]\Bigr) \\
&=\sum_{k=0}^{n-1} \frac{k!(n-k-1)!}{n!} 
  \Bigl(a_{n-k-1|N}(b_{k|N}c)-\binom{n}{k}(a_{n-k-1|N}b)_{k|N}c\Bigr).
\end{align*}
Thus 
\begin{align*}
   \Res_{\Lambda_1}\bigl(\lambda_1^{-1}F_{n+2}(a\otimes b\otimes c)\bigr) 
=n!\Res_{\Lambda}\Bigl(\lambda^{-n-1}
 \bigl([ab_{\Lambda}c]
&-(-1)^{p(a)\ol{N}}(e^{\nabla\cdot\pdd_{\Lambda}}a)[b_\Lambda c] \\
 -(-1)^{(p(a)+\ol{N})p(b)}(e^{\nabla\cdot\pdd_{\Lambda}}b)[a_\Lambda c] 
&-(-1)^{(p(a)+\ol{N})p(b)}\int_0^{\lambda}d\Gamma[b_{\Gamma}[a_{\Lambda-\Gamma}c]\bigr)\Bigr).
\end{align*}
Hence the equation \eqref{eq:W:ResFn} for $n\ge 2$ follows from the right Wick formula. 
\end{proof}

As was mentioned around \eqref{eq:W:ResFn}, 
$X \square X=0$ follows from \cref{lem:ResF01} and \cref{lem:W:ResFn}. 

The proof of \cref{thm:W:VA} is now complete. 

\subsection{Relation to $N_W=N$ SUSY chiral algebras}\label{ss:W:CA}

In \cite[Appendix A]{BDHK}, the authors discussed the relation between their operadic theory of vertex algebras and the theory of chiral algebras \cite{BD}, which is an algebro-geometric reformulation of vertex algebras. Here we give an $N_W=N$ SUSY analogue of the argument in \cite[Appendix A]{BDHK}. A SUSY analogue of a chiral algebra was introduced by Heluani in \cite[\S4]{H}, which will be used in our discussion.

We continue to work over a field $\bbK$ of characteristic $0$, and take a positive integer $N$. We will also extensively use the theory of superschemes. See \cite{DM}, \cite[\S2.2]{H} and \cite[\S1]{KV} for the detail. An ordinary scheme is called an even scheme for distinction.

\subsubsection{$N_W=N$ SUSY chiral algebras}\label{sss:W:CA}

Let $X$ be an irreducible smooth projective $(1|N)$-dimensional supercurve over $\bbK$ in the sense of \cite[p.1085]{KV}. More explicitly, it is a pair $(X,\shO_X)$ of a topological space $X$ and a sheaf $\shO_X$ of commutative $\bbK$-superalgebras over $X$ such that
\begin{clist}
\item $(X,\shO_X^{\tred})$ is an irreducible smooth projective curve, where $\shO_X^{\tred}$ is the reduced part of $\shO_X$. The sheaf $\shO_X$ is called the structure sheaf of $X$.
\item
For every $x \in X(\bbK)$, there is an open subsets $U \subset X$ and some linearly independent odd elements $\theta^i \in \shO_X(U)$ ($i \in [N]$) such that $\shO_X(U)=\shO_X^{\tred}(U) \otimes \bbK[\theta^1,\dotsc,\theta^N]$. Such $U$ is called a coordinate neighborhood of $x$. 
\end{clist}
Hereafter we call such $X$ a smooth $(1|N)$-supercurve for simplicity.
On each coordinate neighborhood $U \subset X$ of $x$, we can take a local coordinate $z$ of the even curve  $(X,\shO_X^{\tred})$ and a tuple $Z=(z,\zeta^1,\dotsc,\zeta^N)$ satisfying the relation \eqref{eq:W:poly}. We call $Z$ a local coordinate of $U$. 

On a smooth $(1|N)$-supercurve $X$, we have the sheaf $\shD_X$ of differential operators, whose sections $\shD_X(U)$ on a coordinate neighborhood $U \subset X$ is the superalgebra 
\[
 \clD \ceq \bbK[Z][\pdd_Z] = 
 \bbK[z,\zeta^1,\dotsc,\zeta^n][\pdd_z,\pdd_{\zeta^1},\dotsc,\pdd_{\zeta^N}]
\]
of regular differential operators generated by $\pdd_z \ceq \frac{\pdd}{\pdd z}$ and $\pdd_{\zeta^k} \ceq \frac{\pdd}{\pdd \zeta^k}$ ($i \in [N]$). We denote by $\crMod \shD_{X}$ the category of quasi-coherent sheaves of right $\shD_{X}$-modules on $X$.

For each $n \in \bbZ_{>0}$, we denote by $\Delta\colon X \inj X^n$ the embedding of the big diagonal in $X^n$, i.e., the union of the hypersurfaces 
\begin{align}\label{eq:W:bigdiag}
 Z_i = Z_j 
\end{align}
for $i \ne j \in [n]$, using the local coordinates $Z_i=(z_i,\zeta_i^1,\dotsc,\zeta_i^N)$. Hereafter we denote $\Delta \ceq \Delta(X) \subset X^n$ for simplicity. As commented in the beginning of \cite[\S4]{H}, the big diagonal has relative codimension $1|N$, so that it is not a divisor (of codimension $1|0$) in genuine sense, and there is an ambiguity of the pushforward functor $\Delta_*$. We follow \cite[\S4.1.4]{H} to resolve this problem: Let us consider the embedding 
\[
 j\colon X^n \bs \Delta \linj X^n,
\]
where $X^n \bs \Delta$ is the superscheme with underlying topological space $U \ceq \abs{X}^n \bs \abs{\Delta}$ and structure sheaf $\rst{\shO_{X^n}}{U}$. Then we define the pushforward $\Delta_* \shA$ of a right $\shD_X$-module $\shA$ to be $\Delta_* \shA \ceq j_* j^*(\shO_{X^{n-1}} \boxtimes \shA)/(\shO_{X^{n-1}} \boxtimes \shA)$.

Now, for a right $\shD_X$-module $\shA$, we set 
\[
 \oPchW{\shA}(n) \ceq 
 \Hom_{\crMod \shD_{X^n}}\bigl(j_* j^* \shA^{\boxtimes n},\Delta_*\shA\bigr).
\]
By the argument in the even case \cite{BD}, $\oPchW{\shA}=\bigl(\oPchW{\shA}(n)\bigr)_{n \in \bbN}$ has a structure of a superoperad. The image of the generator of $\oLie(2)$ under $\varphi$ gives a binary operation $\mu \in \oPchW{\shA}(2)$, which is a map $\mu_{\shA}\colon j_* j^* \shA \boxtimes \shA \to \Delta_* \shA$ satisfying skew-symmetry and Jacobi identity. We call $\mu_{\shA}$ the chiral operation of $\shA$, following the terminology in \cite[\S3.1]{BD}.

A \emph{non-unital $N_W=N$ SUSY chiral algebra on $X$} is a pair $(\shA,\varphi)$ of a right $\shD_X$-module $\shA$ and a superoperad morphism $\varphi\colon \oLie \to \oPchW{\shA}$. In the language of \cref{dfn:1:Lie}, such a morphism $\varphi$ is nothing but a Lie algebra structure on the superoperad $\oPchW{\shA}$.

Let us denote by $\omega_X$ the Berezinian bundle \cite[2.2.7]{H} of the smooth $1|X$-dimensional curve $X$, which can be regarded as a SUSY analogue of the canonical bundle of an even curve. It is a right $\shD_X$-module in the standard way, and moreover carries a non-unital $N_W=N$ SUSY chiral algebra structure, established by Heluani in \cite[4.1.7]{H}. We denote by $\mu_{\omega} \in \oPchW{\omega_X}(2)$ the corresponding chiral operation. 

Now we define an \emph{$N_W=N$ SUSY chiral algebra $\shA$ on $X$} to be a non-unital one equipped with a morphism $\iota\colon \omega_X \to \shA$ of right $\shD_X$-modules such that the composition $\mu_{\iota \otimes \id_{\shA}}$ (the restriction of $\mu_{\shA}$ to $j_* j^* \omega_X \boxtimes \shA \to j_* j^* \shA \boxtimes \shA$) coincides with $j_* j^* \omega_X \boxtimes \shA \to (j_* j^* \omega_X \boxtimes \shA)/\omega_X \boxtimes \shA \sto \Delta_*\shA$ (the latter in the even case is called the unit operation \cite[3.1.12, 3.3.3]{BD}).

\subsubsection{SUSY chiral algebras on the superline}

We write down $N_W=N$ SUSY chiral algebras over the affine superline $\bbA^{1|N}$, following the argument for the even case \cite[\S A.2]{BDHK}.

Let $Z=(z,\zeta^1,\dotsc,\zeta^N)$ be a $(1|N)_W$-supervariable and $\bbK[Z]$ be the corresponding polynomial superring in the sense of \cref{ss:W:poly}. In the setting of the previous \cref{sss:W:CA}, consider the case where the smooth $(1|N)$-supercurve $X$ is taken to be
\[
 X = \bbA^{1|N} \ceq \Spec(\bbK[Z]),
\] 
i.e., the affine superspace of dimension $1|N$ in the sense of \cite[Example 1.1.6]{KV}. A right $\shD_X$-module $\shA$ is determined by the right module $A \ceq \Gamma(X,\shA)$ over the superalgebra $\clD=\Gamma(X,\shD_{X})$ of regular differential operators on the supervariable $Z$. 

Let $n \in \bbZ_{>0}$, and $Z_k=(z_k,\zeta_k^1,\dotsc,\zeta_k^N)$ be a $(1|N)_W$-supervariable for $k \in [n]$, regarded as the local coordinate of the $k$-th component of the $n$-th product supervariety $X^n = (\bbA^{1|N})^n$. Then we denote the superalgebra $\Gamma(X^n,\shO_{X^n})$ of regular functions on $X^n$ by
\[
 \clO_n \ceq \bbK[Z_1,\dotsc,Z_n]. 
\]
Also recall the superalgebra $\clO_n^{\star}=\bbK[Z_k]_{k=1}^n[z_{k,l}^{-1}]_{1 \le k<l \le n}$ in \cref{dfn:W:O}. Then, by the argument in \cite[\S A.2]{BDHK}, we have
\begin{align}\label{eq:W:A2}
 \Gamma(X,j_* j^* \shA^{\boxtimes n}) \cong \clO_n^{\star} \otimes_{\clO_n} A^{\otimes n},
\end{align}
which is a right module over the superalgebra 
\[
 \clD_n \ceq \bbK[Z_1,\dotsc,Z_n][\pdd_{Z_1},\dotsc,\pdd_{Z_n}]
\]
of regular differential operators on $X^n$. The module structure is given by the action on the right factor in \eqref{eq:W:A2}.

Let $I \subset \clD_n$ be the left ideal generated by $z_1-z_k$ for $k \in \{2,\dotsc,n\}$. Then the quotient $I \bs \clD_n$ is a $\clD_1$-$\clD_n$-bimodule, where the $\clD_n$-action is the right multiplication, and the left action of $\clD_1 = \bbK[Z][\pdd_Z]$-action is given by letting $z,\zeta^i$ act from left by $z_1,\zeta_1^i$ and $\pdd_z,\pdd_{\zeta^i}$ act as multiplication on the left by $\sum_{k=1}^n \pdd_{z_k}, \sum_{k=1}^n \pdd_{\zeta^i_k}$. Then, by the argument in \cite[\S A.2]{BDHK}, we have
\begin{align}\label{eq:W:A3}
 \Gamma(X^n,\Delta_*\shA) \cong A \otimes_{\clD_1} (I \bs \clD_n),
\end{align}
where the right $\clD_n$-module structure is given by right multiplication on the right factor. Combining \eqref{eq:W:A2} and \eqref{eq:W:A3}, we obtain an $\frS_n$-supermodule isomorphism 
\begin{align}\label{eq:W:A4}
 \oPchW{\shA}(n) \cong \Hom_{\crMod \clD_n}\bigl(
  \clO^{\star}_n \otimes_{\clO_n} A^{\otimes n}, A \otimes_{\clD_1}(I \bs \clD_n)\bigr),
\end{align}
where $\crMod \clD_n$ denotes the category of right $\clD_n$-supermodules.

\subsubsection{Equivariant SUSY chiral algebras on the superline}

Next, we restrict the above description to the translation-equivariant part. We use the terminology on equivariant $\shD$-modules in \cite[\S A.3]{BDHK}, which we extend to superschemes in an obvious way. In particular, we will use the notion of \emph{weakly equivariant $\clD$-modules}.

The affine superline $X=\bbA^{1|N}$ has a natural action of the group superscheme $\Aff^{1|N}$ of affine transformations. Consider the even subgroup scheme $\rmT \ceq \bbG_a \times \bbG_a^N \subset \Aff^{1|N}$ consisting of translations. We have an equivalence of categories between $\rmT$-equivariant quasi-coherent $\shO_X$-modules and linear superspaces by taking the stalk at the origin $0 \in X$. The inverse functor sends a linear superspace $V$ to the sheaf associated to the $\bbK[Z]$-supermodule $V[Z]=\bbK[Z] \otimes_\bbK V$ equipped with the action of $(u,\upsilon^1,\dotsc,\upsilon^N) \in \rmT$ by $v(z,\zeta^1,\dotsc,\zeta^N) \mto v(z+u,\zeta^1+\upsilon^1,\dotsc,\zeta^N+\upsilon^N)$.  

As for $\rmT$-equivariant $\clD$-modules on $X=\bbA^{1|N}$, following the even case \cite[\S A.4]{BDHK}, we have an equivalence of categories 
\begin{align}\label{eq:W:TDmod=Hmod}
\begin{split}
 \bigl(\text{weakly $\rmT$-equivariant quasi-coherent $\shD_X$-modules $\shA$ on $X$}) \\
 \lsto \bigl(\text{$\clH_W$-supermodules $(V,\nabla) = (V,T,S^1,\dotsc,S^N)$}\bigr).
\end{split}
\end{align}
For the latter, see \cref{dfn:W:clHW}. Indeed, given a $\shD_X$-module $\shA$, taking the stalk of $\shA$ at the origin $0 \in X$, we obtain $V$ as in the last paragraph, together with the endomorphisms $\nabla=(T,S^1,\dotsc,S^N)$ on $V$ induced by the differential operators $\pdd_Z=(\pdd_z,\pdd_{\zeta^1},\dotsc,\pdd_{\zeta^N})$ which satisfy the relations \eqref{eq:W:TSi}. Conversely, given a pair $(V,\nabla)$, we have a $\clD_1$-module structure on $V[Z]=\bbK[z,\zeta^1,\dotsc,\zeta^i] \otimes_{\bbK} V$ by letting $Z$ act by multiplication by $Z$, and $\pdd_z$ (resp.\ $\pdd_{\zeta^i}$) act by $T-\frac{\pdd}{\pdd z}$ (resp.\ $S^i-\frac{\pdd}{\pdd \zeta^i}$).

Now, following the argument in \cite[\S A.6]{BDHK}, let us consider a weakly $\rmT$-equivariant $\clD$-module $\shA$ on $X=\bbA^{1|N}$ corresponding to the $\clH_W$-supermodule $(V,\nabla)$. Then the associated $\clD_n$-module \eqref{eq:W:A3} is given by 
\[
 V \otimes_{\bbK[\nabla]}\bbK[Z][\nabla_1,\dotsc,\nabla_n]
\]
with $Z=(z,\zeta^1,\dotsc,\zeta^N)$ and $\nabla_k=(T_k,S_k^1,\dotsc,S_k^N)$ satisfying the relations \eqref{eq:W:TSi} for each $k \in [n]$. The commutative superalgebra $\bbK[Z][\nabla_1,\dotsc,\nabla_n]$ is regarded as a $\bbK[\pdd]$-$\clD_n$-bimodule in the following way:
\begin{itemize}
\item 
The left action of $\nabla=(T,S^1,\dotsc,S^N)$ is given by 
$(\sum_{k=1}^n T_k,\sum_{k=1}^n S_k^1,\dotsc,\sum_{k=1}^n S_k^N)$.
\item
The right action of $\pdd_{Z_k}=(\pdd_{z_k},\pdd_{\zeta_k^1},\dotsc,\pdd_{\zeta_k^N})$ is the right multiplication by $\nabla_k$, and the right action of $Z_k=(z_k,\zeta_k^1,\dotsc,\zeta_k^N)$ on $f=f(Z,\nabla_1,\dotsc,\nabla_n) \in \bbK[Z][\nabla_1,\dotsc,\nabla_n]$ is given by 
\begin{align*}
 f \cdot z_k \ceq z f+\frac{\pdd}{\pdd T_k}f, \quad 
 f \cdot \zeta_k^i \ceq (-1)^{p(f)}\bigl(\zeta^i f+\frac{\pdd}{\pdd S_k^i}f\bigr),
\end{align*}
where $p(f)$ denotes the parity of $f$. 
\end{itemize}
Hence, $\oPchW{\shA}(n)$ in \eqref{eq:W:A4} is equal to 
\begin{align}\label{eq:W:A8}
 \oPchW{\shA}(n) = \Hom_{\crMod \clD_n}\bigl(\clO^{\star}_n \otimes V^{\otimes n}, 
       V \otimes_{\bbK[\nabla]}\bbK[Z][\nabla_1,\dotsc,\nabla_n]\bigr).
\end{align}

The translation group $\rmT = \bbG_a \times \bbG_a^N$ acts on $\oPchW{\shA}(n)$ by letting $U=(u,\upsilon^1,\dotsc,\upsilon^N) \in \rmT$ act on $\varphi \in \oPchW{\shA}(n)$ to give $\varphi^U \in \oPchW{\shA}(n)$ with 
\begin{align}\label{eq:W:A10}
 \varphi^U \bigl(f(Z_0,\dotsc,Z_n) \otimes v_1 \otimes \dotsb \otimes v_n) \ceq 
 \rst{\varphi(f(Z_1-U,\dotsc,Z_n-U) \otimes  v_1 \otimes \dotsb \otimes v_n)}{Z=Z+U}
\end{align}
for $f=f(Z_1,\dotsc,Z_n) \in \clO^{\star}_n$ and $v_k \in V$, $k \in [n]$. Then the superoperad $\oPchW{\shA}$ is $\rmT$-equivariant in the sense of \cite[\S A.5]{BDHK}, and the $\rmT$-invariants $\oPchWT{\shA}(n) \subset \oPchW{\shA}(n)$ form a sub-operad 
\[
 \oPchWT{\shA} \subset \oPchW{\shA}.
\]

Now, we have the main statement of this \cref{ss:W:CA}. 

\begin{prp}
Let $V=(V,\nabla)$ be an $\clH_W$-supermodule (\cref{dfn:W:clHW}), and $\shA$ be the corresponding weakly $\rmT$-equivariant $\clD$-module on $\bbA^{1|N}$ under the equivalence \eqref{eq:W:TDmod=Hmod}. Let $\oPchW{V}$ be the $N_W=N$ SUSY chiral operad in \cref{dfn:W:Pch}. Then there is an isomorphism of operads
\[
 \oPchWT{\shA} \cong \oPchW{V}.
\]
\end{prp}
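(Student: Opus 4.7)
The plan is to construct mutually inverse isomorphisms of $\frS_n$-supermodules
\[
 \Phi_n \colon \oPchWT{\shA}(n) \lto \oPchW{V}(n), \qquad
 \Psi_n \colon \oPchW{V}(n) \lto \oPchWT{\shA}(n)
\]
for each $n \in \bbN$, and then verify that they intertwine the operad composition maps. Throughout, I would use the explicit description \eqref{eq:W:A8} of $\oPchW{\shA}(n)$ together with the identification $\nabla_k \leftrightarrow -\Lambda_k$ arising from the $\clH_W^{\otimes n}$-supermodule structure on $\bbK[\Lambda_k]_{k=1}^n$ introduced before \eqref{eq:LCA:HWn-mod}.

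As a preparation I would establish the decomposition $\clO_n^{\star} = \bbK[Z_1] \otimes_\bbK \clO_n^{\star\rmT}$ via the change of variables $W_1 \ceq Z_1$, $W_k \ceq Z_{1,k}$ for $k \ge 2$: every difference $Z_{k,l}$ becomes a polynomial expression in $W_2,\dotsc,W_n$, so $\clO_n^{\star\rmT}$ is precisely the subalgebra generated by the $W_k$ with $k \ge 2$ (together with inverses of the $w_{k,l}$), while $\bbK[Z_1]$ provides the orthogonal factor. As a consequence, the $\rmT$-invariant part of the target of \eqref{eq:W:A8} is the $Z$-independent subspace $V \otimes_{\bbK[\nabla]} \bbK[\nabla_1,\dotsc,\nabla_n]$, which under the identification above coincides with $V_\nabla[\Lambda_k]_{k=1}^n$.

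To define $\Phi_n$: for $\varphi \in \oPchWT{\shA}(n)$, $g \in \clO_n^{\star\rmT}$, and $v \in V^{\otimes n}$, the element $\varphi(g \otimes v)$ is $\rmT$-invariant in the target by the formula \eqref{eq:W:A10}, hence lies in the $Z$-independent subspace and yields a well-defined value in $V_\nabla[\Lambda_k]_{k=1}^n$. I would then verify that the resulting map $\Phi_n(\varphi)$ is $\clD_n^\rmT$-linear by comparing the $\clD_n$-action on $\clO_n^\star \otimes V^{\otimes n}$ from \eqref{eq:W:A8} with the twisted right $\clD_n^\rmT$-action defined before \cref{dfn:W:Pch}: on the $\rmT$-invariants the two actions agree up to the identification $\pdd_{Z_k} \leftrightarrow \nabla_k$ on the target, which is exactly the shift by $T^{(k)}$ built into the latter action. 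For the inverse $\Psi_n$, I would invoke the decomposition $\clO_n^\star = \bbK[Z_1] \otimes_\bbK \clO_n^{\star\rmT}$ and define $\Psi_n(\psi)$ as the unique $\rmT$-equivariant $\clD_n$-linear extension of $\psi$, concretely by prescribing $\Psi_n(\psi)(Z_1^{m|I} g \otimes v)$ in terms of $\psi(v \otimes g)$ and multiplication by $Z^{m|I}$ in the target. A direct unwinding yields $\Phi_n \circ \Psi_n = \id$ and $\Psi_n \circ \Phi_n = \id$, and the $\frS_n$-equivariance of both maps is immediate from the construction.

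The final step, which I expect to be the main technical obstacle, is to verify that $\Phi \ceq (\Phi_n)_n$ is compatible with operadic compositions. On the algebraic side, the composition in \cref{dfn:W:Pch} involves the decomposition $f = f_0 f_1 \cdots f_m$ and the formation of the total supervariables $\Lambda'_j = \Lambda_{N_{j-1}+1}+\cdots+\Lambda_{N_j}$. On the geometric side, the chiral composition in $\oPchW{\shA}$ is built from the pushforward along the partial diagonals $Z_{N_{j-1}+1} = \cdots = Z_{N_j}$ and the chiral products on the factors $j_*j^*\shA^{\boxtimes n_j}$. Under $\Phi$ these two recipes match: the factorization $f = f_0 \prod_j f_j$ reflects the factorization of sections of $j_*j^* \shA^{\boxtimes n}$ around the partial diagonals, and the sum $\Lambda'_j$ records the total translation weight along the $j$-th diagonal. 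This identification is the SUSY analogue of the verification carried out in \cite[\S A.6]{BDHK}, and proceeds by a careful unwinding of the composition formulas with the expected Koszul sign bookkeeping.
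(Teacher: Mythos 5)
Your proposal is correct and follows essentially the same route as the paper: identify the $\rmT$-invariant parts of both sides of \eqref{eq:W:A8} (using \eqref{eq:W:Ker-delta} for the source and the $\nabla_k \leftrightarrow -\Lambda_k$ identification of \cref{lem:W:VnLn=VLn-1} for the target), obtain the forward map by restriction via \eqref{eq:W:A10}, and recover the inverse from the splitting $\clO_n^{\star} \cong \bbK[Z_1] \otimes_\bbK \clO_n^{\star\rmT}$ — which is exactly the Taylor-expansion technique of \cite[Lemma A.1]{BDHK} that the paper invokes. You spell out the compatibility with operadic composition more explicitly than the paper does, but the underlying argument is the same.
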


\begin{proof}
The even case argument \cite[Lemma A.1]{BDHK} works, so we will just explain the outline.

By \eqref{eq:W:A8}, we want to determine the $\rmT$-invariant parts of $\clO^{\star}_n \otimes V^{\otimes n}$ and $V \otimes_{\bbK[\nabla]}\bbK[Z][\nabla_1,\dotsc,\nabla_n]$. As for the former, by \eqref{eq:W:Ker-delta}, we have  an isomorphism
\[
 (\clO^{\star}_n \otimes V^{\otimes n})^{\rmT} = 
 (\clO^{\star}_n)^{\rmT} \otimes V^{\otimes n} = \clO^{\star \rmT}_n \otimes V^{\otimes n}
\]
as $\clD_n^{\rmT}$-supermodules, where 
\[
 \clD_n^{\rmT} \ceq \Ker(\ad \delta^0) \cap \Ker(\ad \delta^1) \cap \dotsb \cap 
                    \Ker(\ad \delta^N) \bigr) \subset \clD_n
\]
with $\delta^i$ given after \eqref{eq:W:Ker-delta}. 
As for the latter $V \otimes_{\bbK[\nabla]}\bbK[Z][\nabla_1,\dotsc,\nabla_n]$, by the argument of \cref{lem:W:VnLn=VLn-1}, we have a $\clD_n^{\rmT}$-supermodule isomorphism
\begin{align*}
 (V \otimes_{\bbK[\nabla]}\bbK[Z][\nabla_1,\dotsc,\nabla_n])^{\rmT}  = 
 V \otimes_{\bbK[\nabla]} \bbK[\nabla_1,\dotsc,\nabla_n] &\lsto 
 V[\Lambda_k]_{k=1}^n/\Img(\nabla+\Lambda_1+\dotsb+\Lambda_n), \\
 v \otimes f(\nabla_1,\dotsc,\nabla_n) &\lmto f(-\Lambda_1,\dotsc,-\Lambda_n)v.
\end{align*}

Now, any $n$-operation $\varphi \in \oPchWT{\shA}(n)$ on $f \otimes v_1 \otimes \dotsb \otimes v_n \in \clO_n^{\star} \otimes V^{\otimes n}$ is independent of $Z$ by \eqref{eq:W:A10}, so that it defines an element of $V[\Lambda_k]_{k=1}^n/\Img(\nabla+\Lambda_1+\dotsb+\Lambda_n)$. Thus we obtain an element of $\oPchW{V}(n)$ from $\varphi$. An inverse construction is given by the Taylor-expansion technique in the proof of \cite[Lemma A.1]{BDHK}.
\end{proof}

\section{\texorpdfstring{$N_K=N$}{NK=N} SUSY chiral operad}\label{s:K}

In this section, we introduce $N_K=N$ SUSY analogues of the operads $\oCh{}$ and $\oPch{}$ in \cite{BDHK}. Most  of the results and proofs in \cref{s:W} for the $N_W=N$ case apply to the $N_K=N$ case with minor modifications, so we omit some details. We fix a positive integer $N$ throughout this section.

\subsection{Polynomial superalgebra}\label{ss:K:poly}

Here we give a summary of the polynomial ring of supervariables in the $N_K=N$ setting. 

\begin{dfn}
Let $A$ be a set and $\Lambda_\alpha=(\lambda_\alpha,\theta_\alpha^1,\ldots,\theta_\alpha^N)$ be a sequence of letters for each $\alpha\in A$. We denote by $\bbK[\Lambda_\alpha]_{\alpha\in A}$ the $\bbK$-superalgebra generated by even generators $\lambda_\alpha$ ($\alpha \in A$) and odd generators $\theta_\alpha^i$ ($\alpha \in A$, $i \in [N]$) with relations
\begin{align*}
 \lambda_\alpha  \lambda_\beta -\lambda_\beta  \lambda_\alpha  = 0, \quad 
 \lambda_\alpha  \theta_\beta^i-\theta_\beta^i \lambda_\alpha  = 0, \quad 
 \theta_\alpha^i \theta_\beta^j+\theta_\beta^j \theta_\alpha^i =
 -2\delta_{\alpha,\beta}\delta_{i,j} \lambda_\alpha \quad (\alpha,\beta \in A, \, i,j \in [N]).
\end{align*}
Each $\Lambda_\alpha$ for $\alpha \in A$ is called a \emph{$(1|N)_K$-supervariable}, and the $\bbK$-superalgebra $\bbK[\Lambda_\alpha]_{\alpha \in A}$ is called the \emph{$N_K=N$ polynomial superalgebra of the supervariables $(\Lambda_\alpha)_{\alpha \in A}$}. 
\end{dfn}

As in the $N_W=N$ case (see the lines after \cref{dfn:W:poly}), we will also use the simplified symbols $K[\Lambda_k]_{k=1}^n$ and $K[\Lambda]$. Note that the $N_K=N$ polynomial superalgebra is not commutative, unlike the $N_W=N$ polynomial superalgebra.

For a linear superspace $V$ and a $(1|N)_K$-supervariable $\Lambda_\alpha$ ($\alpha\in A$) , we denote
\begin{align*}
 V[\Lambda_\alpha]_{\alpha \in A} \ceq \bbK[\Lambda_\alpha]_{\alpha \in A} \otimes_\bbK V, 
\end{align*}
which is naturally a left $\bbK[\Lambda_\alpha]_{\alpha\in A}$-supermodule. 

\begin{dfn}
Let $\clH_K$ be the $\bbK$-superalgebra generated by an even generator $T$ and odd generators $S^i$ $(i\in[N])$ with relations
\begin{align*}
 TS^i-S^iT = 0, \quad S^iS^j+S^jS^i = 2\delta_{i,j}T \quad (i,j \in [N]). 
\end{align*}
For simplicity, we denote $\nabla\ceq(T, S^1, \ldots, S^N)$. 
A linear superspace $V$ equipped with a left $\clH_K$-supermodule structure is denoted as $(V,\nabla)=(V,T,S^1,\ldots,S^N)$. 
\end{dfn}

Note that $\clH_K$ is isomorphic to $\bbK[\Lambda]$ as a superalgebra by the homomorphism defined by 
\begin{align}\label{eq:HKKLam}
 T \lmto -\lambda, \quad S^i \lmto -\theta^i \quad (i\in[N]). 
\end{align}

\subsection{$N_K=N$ SUSY Lie conformal operad}\label{ss:K:LCA}

In this subsection, we introduce an $N_K=N$ SUSY analogue of the operad $\oCh{}$ in \cite{BDHK}. All the results and proofs for the $N_W=N$ case in \cref{ss:W:LCA} are also valid for the $N_K=N$ case with the modifications: 
\begin{itemize}
 \item Replace the superalgebra $\clH_W$ with $\clH_K$. 
 \item Replace all $(1|N)_W$-supervariables with $(1|N)_K$-supervariables. 
\end{itemize}
So we only refer to the results without proofs. 

\medskip

Let us fix a $(1|N)_K$-supervariable $\Lambda=(\lambda, \theta^1, \ldots, \theta^N)$. 

\begin{dfn}[{\cite[Definition 4.10]{HK}}]\label{dfn:K:LCA}
Let $V$ be a left $\clH_K$-supermodule and $[\cdot_\Lambda\cdot]\colon V\otimes V\to V[\Lambda]$ be a linear map of parity $\ol{N}$. 
A triple $(V, \nabla, [\cdot_\Lambda\cdot])$ is called an \emph{$N_K=N$ SUSY Lie conformal algebra} if it satisfies the following conditions: 
\begin{clist}
 \item (\emph{sesquilinearity}) 
 For any $a, b\in V$, 
 	\begin{align}\label{eq:Ksesq}
	[S^ia_\Lambda b]=-(-1)^N\theta^i[a_\Lambda b], \quad 
	[a_\Lambda S^ib]=(-1)^{p(a)}(\theta^i+S^i)[a_\Lambda b]\quad (i\in[N]). 
 	\end{align}
 
 \item (\emph{skew-symmetry})
  For any $a, b\in V$, 
 	\begin{align}\label{eq:Kssym}
 	[b_\Lambda a]=-(-1)^{p(a)p(b)+\ol{N}}[a_{-\Lambda-\nabla}b]. 
 	\end{align}

 \item (\emph{Jacobi identity}) 
 For any $a, b, c\in V$, 
	 \begin{align}\label{eq:KJac}
	 [a_{\Lambda_1}[b_{\Lambda_2}c]]
	 =(-1)^{p(a)\ol{N}+\ol{N}}[[a_{\Lambda_1}b]_{\Lambda_1+\Lambda_2}c]
	 +(-1)^{(p(a)+\ol{N})(p(b)+\ol{N})}[b_{\Lambda_2}[a_{\Lambda_1}c]],
 	\end{align}
 where $\Lambda_1, \Lambda_2$ are $(1|N)_K$-supervariables. 
\end{clist}

The linear map $[\cdot_\Lambda\cdot]$ is called the \emph{$\Lambda$-bracket} of the left $\clH_K$-supermodule $V$. For simplicity, we say $V$ is an $N_K=N$ SUSY Lie conformal algebra. 
\end{dfn}

\begin{rmk}
In \eqref{eq:Ksesq} and \eqref{eq:Kssym},  the symbols $T$ and $S^i$ on the left of the $(1|N)_K$-supervariable $\Lambda$ are identified with the element of $\bbK[\Lambda]$ by the isomorphism \eqref{eq:HKKLam}. Also, \eqref{eq:KJac} is the identity in $V[\Lambda_k]_{k=1, 2}$. 
\end{rmk}

In the remaining of this section, let $(V, \nabla)$ be a left $\clH_K$-supermodule and $\Lambda_k=(\lambda_k, \theta_k^1, \ldots, \theta_k^N)$ be a $(1|N)_K$-supervariable for each $k\in\bbZ_{>0}$. 

The polynomial superalgebra $\bbK[\Lambda_k]_{k=1}^n$ carries a structure of an $(\clH_K^{\otimes n}, \clH_K)$-superbimodule as follows: 
First, since any element of $\clH_K$ is expressed uniquely as $\varphi(\nabla)$ for some $\varphi(\Lambda) \in \bbK[\Lambda]$, the space $\bbK[\Lambda_k]_{k=1}^n$ has a left $\clH_K^{\otimes n}$-supermodule structure by letting $\varphi_1(\nabla)\otimes \cdots \otimes \varphi_n(\nabla)\in\clH_W^{\otimes n}$ act on $a(\Lambda_1, \ldots, \Lambda_n) \in \bbK[\Lambda_k]_{k=1}^n$ as 
\[
 \bigl(\varphi_1(\nabla)\otimes \cdots \otimes \varphi_n(\nabla)\bigr)a(\Lambda_1 \ldots, \Lambda_n)
 \ceq \varphi_1(-\Lambda_1)\cdots \varphi_n(-\Lambda_n)a(\Lambda_1, \ldots, \Lambda_n). 
\]
Second, the space $\bbK[\Lambda_k]_{k=1}^n$ is a right $\clH_K$-supermodule by letting 
\begin{align*}
 a(\Lambda_1, \ldots, \Lambda_n)\cdot T \ceq 
 a(\Lambda_1, \ldots, \Lambda_n)\Bigl(-\sum_{k=1}^n\lambda_k\Bigr), \quad
 a(\Lambda_1, \ldots, \Lambda_n)\cdot S^i\ceq 
 a(\Lambda_1, \ldots, \Lambda_n)\Bigl(-\sum_{k=1}^n\theta_k^i\Bigr)
\end{align*}
for $a(\Lambda_1, \ldots, \Lambda_n) \in \bbK[\Lambda_k]_{k=1}^n$. 
These left and right supermodule structures are consistent, and we have the desired superbimodule structure.

Thus, for an $\clH_K$-supermodule $V=(V,\nabla)$ and $n\in\bbZ_{>0}$, we obtain a left $\clH_K^{\otimes n}$-supermodule
\begin{align}\label{eq:LCA:HKn-mod}
 V_{\nabla}[\Lambda_k]_{k=1}^n \ceq \bbK[\Lambda_k]_{k=1}^n \otimes _{\clH_K} V. 
\end{align}
For $n=0$, we denote
\begin{align*}
V_\nabla[\Lambda_k]_{k=1}^n \ceq V/\nabla V, 
\end{align*}
where $\nabla V\ceq TV+\sum_{i=1}^NS^iV\subset V$. 

As in the $N_W=N$ case (\cref{lem:W:VnLn=VLn-1}), we have:

\begin{lem}\label{lem:K:VnLn=VLn-1}
The linear superspace $V_\nabla[\Lambda_k]_{k=1}^n$ is isomorphic to $V[\Lambda_k]_{k=1}^{n-1}$ by the linear map 
\begin{align*}
 a(\Lambda_1, \cdots, \Lambda_n) \otimes v \lmto 
 a(\Lambda_1, \cdots, \Lambda_{n-1}, -\Lambda_1-\cdots-\Lambda_{n-1}-\nabla)v
 \quad (a \in \bbK[\Lambda]_{k=1}^n, \, v \in V).
\end{align*}
\end{lem}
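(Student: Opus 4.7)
The proof is an analogue of that of Lemma \ref{lem:W:VnLn=VLn-1} for the $N_W=N$ case, with additional care needed for the non-commutativity of $\bbK[\Lambda_k]_{k=1}^n$ in the $K$ setting. The strategy is to exhibit an explicit inverse. Define $\Psi \colon V[\Lambda_k]_{k=1}^{n-1} \to V_{\nabla}[\Lambda_k]_{k=1}^n$ by $\Psi(a v) \ceq a \otimes v$ for $a \in \bbK[\Lambda_k]_{k=1}^{n-1}$, viewed as an element of $\bbK[\Lambda_k]_{k=1}^n$ having no $\Lambda_n$ dependence. To interpret the formula for $\Phi$ in the statement unambiguously, I would use the PBW theorem to write each element of $\bbK[\Lambda_k]_{k=1}^n$ uniquely as $\sum_{m \in \bbN, \, I \subset [N]} b_{m,I}(\Lambda_1, \ldots, \Lambda_{n-1}) \lambda_n^m \theta_n^I$ with $b_{m,I} \in \bbK[\Lambda_k]_{k=1}^{n-1}$ and $I$ increasing, and then $\Phi$ acts on this normal form by replacing $\lambda_n^m \theta_n^I$ with the operator $\bigl(-\sum_{k<n}\lambda_k - T\bigr)^m \prod_{i \in I}\bigl(-\sum_{k<n}\theta_k^i - S^i\bigr)$ applied to $v$ in the specified order.

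The composition $\Phi \circ \Psi$ equals the identity by inspection of the formula. For $\Psi \circ \Phi = \id$, and equivalently for the well-definedness of $\Phi$ with respect to $\otimes_{\clH_K}$, iteratively apply the relations
\[
 a \lambda_n \otimes v = -a \sum_{k<n}\lambda_k \otimes v - a \otimes T v, \quad
 a \theta_n^i \otimes v = -a \sum_{k<n}\theta_k^i \otimes v - a \otimes S^i v,
\]
which follow from the defining identities $a \cdot T \otimes v = a \otimes T v$ and $a \cdot S^i \otimes v = a \otimes S^i v$ together with $a \cdot T = -a \sum_k \lambda_k$ and $a \cdot S^i = -a \sum_k \theta_k^i$. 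Each application strictly reduces the total $\Lambda_n$ degree on the left, so after finitely many steps any tensor rewrites uniquely as $b \otimes v'$ with $b \in \bbK[\Lambda_k]_{k=1}^{n-1}$ and $v' \in V$; this matches the PBW prescription defining $\Phi$.

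The main subtlety, absent in the $N_W=N$ case, is that the substitution $\Lambda_n \mapsto -\sum_{k<n}\Lambda_k - \nabla$ is not a well-defined superalgebra homomorphism of $\bbK[\Lambda_k]_{k=1}^n$: the algebra relation $(\theta_n^i)^2 = -\lambda_n$ forces $\Phi((\theta_n^i)^2 \otimes v) = \bigl(\sum_{k<n}\lambda_k + T\bigr)v$, whereas the literal operator composition $\bigl(-\sum_{k<n}\theta_k^i - S^i\bigr)^2 v$ evaluates to $\bigl(T - \sum_{k<n}\lambda_k\bigr)v$, differing by $2 \sum_{k<n}\lambda_k v$. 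This is why $\Phi$ must be defined on PBW normal form rather than by formal substitution on arbitrary expressions; the apparent discrepancy is absorbed precisely on passing to the $\clH_K$-coinvariants, where the relation $(S^i)^2 = T$ in $\clH_K$ mirrors the algebra relation $(\theta_n^i)^2 = -\lambda_n$.
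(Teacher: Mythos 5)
Your central observation is correct and is genuinely the crux of the $K$ case: since $(\theta_n^i)^2=-\lambda_n$ while the operator $\Theta^i\ceq-\sum_{k<n}\theta_k^i-S^i$ satisfies $(\Theta^i)^2=T-\sum_{k<n}\lambda_k$, the substitution $\Lambda_n\mto-\Lambda_1-\dotsb-\Lambda_{n-1}-\nabla$ is \emph{not} an algebra homomorphism of $\bbK[\Lambda_k]_{k=1}^n$, and the map in the statement only makes sense after fixing a normal form. The paper gives no proof at all here (it only says ``as in the $N_W=N$ case''), and in the $W$ case the substitution genuinely is an algebra automorphism, so the two cases are not on the same footing; your computation of the discrepancy $2\sum_{k<n}\lambda_k\,v$ is accurate and worth recording.

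There is, however, a gap at exactly the point you flag: your degree-reduction argument only shows that every class in $\bbK[\Lambda_k]_{k=1}^n\otimes_{\clH_K}V$ has a representative of the form $b\otimes v'$ with $b\in\bbK[\Lambda_k]_{k=1}^{n-1}$, i.e.\ that $\Psi$ is surjective. Injectivity of $\Psi$ --- equivalently, that $\Phi$ descends to the quotient, equivalently that your rewriting is confluent --- is precisely where the discrepancy must be shown to cancel, and the sentence ``is absorbed precisely on passing to the $\clH_K$-coinvariants'' asserts this without proof. The missing check is that $\Phi\bigl(a\cdot S^i\otimes v\bigr)=\Phi\bigl(a\otimes S^iv\bigr)$ for $a=b\,\lambda_n^m\theta_n^I$ in normal form. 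The case $i\in I$ is the delicate one: on the left, the relation $\theta_n^I\theta_n^i=\mp\lambda_n\theta_n^{I\bs\{i\}}$ contributes $\pm L^{m+1}\Theta^{I\bs\{i\}}v$ with $L\ceq-\sum_{k<n}\lambda_k-T$; on the right, writing $S^i=-\Theta^i-\sum_{k<n}\theta_k^i$ and commuting $\sum_{k<n}\theta_k^i$ through $\Theta^I$ produces, besides $(\Theta^i)^2=T-\sum_{k<n}\lambda_k$, an extra term from the anticommutator $\bigl\{\sum_{k<n}\theta_k^i,\Theta^i\bigr\}=2\sum_{k<n}\lambda_k$; the two copies of $2\sum_{k<n}\lambda_k$ cancel and one lands on $\pm(-L)$ times the lower term, matching the left side. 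Once this (and the easy $T$-relation) is written out, your proof is complete. A cleaner route avoiding the sign bookkeeping: note that $\nabla\mto-\sum_{k=1}^n\Lambda_k$ \emph{is} an algebra embedding $\clH_K\inj\bbK[\Lambda_k]_{k=1}^n$, show by a filtration argument (leading terms reduce to the commutative $W$ case) that the multiplication map $\bbK[\Lambda_k]_{k=1}^{n-1}\otimes\clH_K\to\bbK[\Lambda_k]_{k=1}^n$ is bijective, and then apply $-\otimes_{\clH_K}V$.
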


Since the tensor product $V^{\otimes n}$ is naturally a left $\clH_K^{\otimes n}$-supermodule, the following definition makes sense:

\begin{dfn}
For a left $\clH_K$-supermodule $V=(V,\nabla)$ and $n \in \bbN$, we denote 
\begin{align*}
 \oChK{V}(n) \ceq \Hom_{\clH_K^{\otimes n}}(V^{\otimes n}, V_\nabla[\Lambda_k]_{k=1}^n).
\end{align*}
In other words, $\oChK{V}(n)$ is the linear sub-superspace of $\Hom_{\bbK}(V^{\otimes n}, V_\nabla[\Lambda_k]_{k=1}^n)$ spanned by elements $X$ such that 
\begin{align*}
 X(\varphi v) = (-1)^{p(\varphi)p(X)}\varphi X(v) \quad 
 (\varphi \in \clH_K^{\otimes n}, \ v \in V^{\otimes n}).
\end{align*}
To stress the variable $\Lambda_1,\ldots,\Lambda_n$, we express an element $X \in \oChK{V}(n)$ as $X_{\Lambda_1,\ldots,\Lambda_n}$. 
\end{dfn}

As in \cref{ss:W:LCA} for the $N_W=N$ case, define the action of $\frS_n$ on $\oChK{V}$ and the composition map, then we have a superoperad $\oChK{V} \ceq \bigl(\oChK{V}(n)\bigr)_{n\in\bbN}$. 

\begin{dfn}\label{dfn:K:Chom}
The superoperad $\oChK{V}$ is called the \emph{$N_K=N$ SUSY Lie conformal operad} of the left $\clH_K$-supermodule $V=(V,\nabla)$. 
\end{dfn}

The following is the main claim of this \cref{ss:K:LCA}, which is a natural $N_K=N$ SUSY analogue of \cite[Proposition 5.1]{BDHK}. This can be proved in the same way as in the $N_W=N$ case (\cref{thm:W:LCA}) with minor modifications. 

\begin{thm}\label{thm:K:LCA}
Let $V=(V,\nabla)$ be a left $\clH_K$-supermodule. 
\begin{enumerate}
\item 
For an odd Maurer-Cartan solution $X \in \MC\bigl(L\bigl(\oChK{\Pi^{N+1}V}\bigr)\bigr)_{\od}$, define a linear map $[\cdot_\Lambda\cdot]_X\colon V \otimes V \to V[\Lambda]$ by 
 \begin{align*}
 [a_\Lambda b] \ceq (-1)^{p(a)(\ol{N}+\od)} X_{\Lambda,-\Lambda-\nabla}(a \otimes b) \quad (a,b \in V).
 \end{align*}
 Then $(V, \nabla, [\cdot_\Lambda\cdot]_X)$ is an $N_K=N$ SUSY Lie conformal algebra. 
 
 \item 
 The map $X \mto [\cdot_\Lambda\cdot]_X$ gives a bijection
 \begin{align*}
 \MC\bigl(L(\oChK{\Pi^{N+1}V})\bigr)_{\od} \lsto 
 \{\text{$N_K=N$ Lie conformal algebra structures on $(V,\nabla)$}\}.
 \end{align*}
\end{enumerate}
\end{thm}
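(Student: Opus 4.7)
The plan is to follow the proof of \cref{thm:W:LCA} essentially verbatim, with the superalgebra $\clH_W$ replaced by $\clH_K$ and $(1|N)_W$-supervariables replaced by $(1|N)_K$-supervariables, as prescribed in the opening paragraph of this section. Set $\wt{V} \ceq \Pi^{N+1}V$ and $\oP \ceq \oChK{\Pi^{N+1}V}$, and let $p$, $\wt{p}$ denote the parities on $V$ and $\wt{V}$ respectively. For an odd element $X \in \oP(2)_{\od}$, define $[\cdot_\Lambda \cdot]_X$ by the formula in the statement. The first step is to verify that $[\cdot_\Lambda\cdot]_X$ has parity $\ol{N}$, which follows immediately from $X$ being an odd map $\wt{V} \otimes \wt{V} \to \wt{V}_{\nabla}[\Lambda_k]_{k=1,2}$ together with the $(N+1)$-fold parity shift from $V$ to $\wt{V}$.

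Next, the sesquilinearity \eqref{eq:Ksesq} is checked by a direct calculation using the $\clH_K^{\otimes 2}$-equivariance of $X$, entirely analogous to the corresponding calculation in \cref{thm:W:LCA}. Note that unlike the $N_W=N$ case only the $S^i$-sesquilinearity must be checked, since in $\clH_K$ we have $T = (S^i)^2$ so the $T$-conditions are automatic. At this point one obtains a bijective correspondence between $\oP(2)_{\od}$ and linear maps $V \otimes V \to V[\Lambda]$ of parity $\ol{N}$ satisfying \eqref{eq:Ksesq}. Then I would show that $X^\sigma = X$ for all $\sigma \in \frS_2$ is equivalent to the skew-symmetry \eqref{eq:Kssym}: this is a direct computation, identical in form to the $N_W=N$ case, comparing the values $X_{\Lambda,-\Lambda-\nabla}(b \otimes a)$ and $X_{-\Lambda-\nabla,\Lambda}(a \otimes b)$.

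Assuming skew-symmetry and sesquilinearity, I would then compute $(X \square X)(a\otimes b\otimes c)$ using the explicit presentation $X\square X = X\circ_1 X+X\circ_2 X+(X\circ_2 X)^{(1,2)}$ of the pre-Lie product in terms of the infinitesimal compositions \eqref{eq:1:circ_i}. Each of the three summands, after applying the definition of $[\cdot_\Lambda \cdot]_X$ and the specialization $\Lambda \mto (\Lambda,-\Lambda-\nabla)$, reproduces (up to a common sign that factors out) one of the three terms of the Jacobi identity \eqref{eq:KJac}. Hence $X \square X = 0$ if and only if \eqref{eq:KJac} holds, and combined with the earlier steps this yields the bijection in (2).

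The main obstacle is the sign bookkeeping coming from the parity shift $\Pi^{N+1}$, the Koszul rule, and the symmetrizations of $(1|N)_K$-supervariables. The only structural difference from the $N_W=N$ computation is that pairs $S^iS^j$ in $\clH_K^{\otimes 2}$ no longer strictly anticommute but produce $T$-contributions via $S^iS^j+S^jS^i=2\delta_{i,j}T$; I would verify that these extra contributions are consistently absorbed on the right-hand side of each identity through the $\clH_K^{\otimes n}$-supermodule structure of $V_\nabla[\Lambda_k]_{k=1}^n$ and \cref{lem:K:VnLn=VLn-1}. With that sign bookkeeping in hand, the argument goes through exactly as in the $N_W=N$ case, and the inverse construction $[\cdot_\Lambda\cdot] \mto X$ is obtained by inverting the defining formula using \cref{lem:K:VnLn=VLn-1}.
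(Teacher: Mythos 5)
Your proposal matches the paper's own treatment: the paper proves \cref{thm:K:LCA} precisely by rerunning the proof of \cref{thm:W:LCA} with $\clH_W$ replaced by $\clH_K$ and $(1|N)_W$-supervariables replaced by $(1|N)_K$-supervariables, which is exactly what you do. Your additional observation that the $T$-sesquilinearity is automatic in the $N_K=N$ case (since $T=(S^i)^2$ in $\clH_K$ and $(\theta^i)^2=-\lambda$ in the $N_K=N$ polynomial superalgebra) is correct and consistent with the fact that \cref{dfn:K:LCA} omits the $T$-conditions.
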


\subsection{$N_K=N$ SUSY chiral operad}\label{ss:K:VA}

In this subsection, we introduce an $N_K=N$ SUSY analogue of the operad $P^{\ch}$ in \cite{BDHK}. 
All the results and proofs in \cref{ss:W:VA} for the $N_W=N$ case are also valid for the $N_K=N$ case with the modifications: 
\begin{itemize}
\item 
Replace the superalgebra $\clH_W$ with $\clH_K$. 

\item 
Replace $(1|N)_W$-supervariables $\Lambda$, $\Lambda_k$ $(k\in\bbZ_{>0})$ and $\Gamma$ with $(1|N)_K$-supervariables (while do not replace $(1|N)_W$-supervariables $Z_k$).

\item 
For $(1|N)_W$-supervariables $Z_k=(z_k, \zeta^1_k, \ldots, \zeta^N_k)$ with $k\in\bbZ_{>0}$, replace $z_{k,l}=z_k-z_l$ and $\pdd_{\zeta_k^i}$ by  
\begin{align}\label{eq:K:Z-W}
 z_{k,l} \ceq z_k-z_l-\sum_{i=1}^N \zeta^i_k \zeta^i_l, \quad 
 D_{\zeta^i_k} \ceq \pdd_{\zeta^i_k}+\zeta^i_k\pdd_{z_k}, 
\end{align}
respectively. 
\end{itemize}

\medskip

Let us fix a $(1|N)_K$-supervariable $\Lambda=(\lambda,\theta^1,\ldots,\theta^N)$. 
For even linear transformations $F,G$ on linear superspace $V$, we can define a linear map $\int_F^G d\Lambda\colon V[\Lambda]\to V$ of parity $\ol{N}$ as in \cref{ss:W:VA}. 
Also, if $V$ is a superalgebra (not necessarily unital nor associative), we can define a linear map $\int_F^G d\Lambda\, a$ ($a \in V$) of parity $p(a)$. Using this integral, we introduce: 

\begin{dfn}[{\cite[Definition 4.19]{HK}}]\label{dfn:NKVA}
Let $(V,\nabla,[\cdot_\Lambda\cdot])$ be an $N_K=N$ SUSY Lie conformal algebra and $\mu\colon V \otimes V \to V$ be an even linear map. We denote $ab \ceq \mu(a \otimes b)$ for $a,b \in V$. 
A tuple $(V,\nabla,[\cdot_\Lambda\cdot],\mu)$ is called a \emph{non-unital $N_K=N$ SUSY vertex algebra} if it satisfies the following conditions: 
\begin{clist}
\item For any $a,b \in V$, 
\begin{align}\label{eq:KVA:der}
  S^i(ab) = (S^ia)b+(-1)^{p(a)}a(S^ib) \quad (i \in [N]). 
\end{align}

\item (\emph{quasi-commutativity}) For any $a,b \in V$, 
\begin{align}\label{eq:KVA:qcom}
 ab-(-1)^{p(a)p(b)}ba = \int_{-T}^0 d\Lambda [a_\Lambda b]. 
\end{align}

\item (\emph{quasi-associativity}) For any $a,b,c \in V$, 
\begin{align}\label{eq:KVA:qass}
  (ab)c-a(bc) = \Bigl(\int_0^Td\Lambda a\Bigr)[b_\Lambda c]
+(-1)^{p(a)p(b)}\Bigl(\int_0^Td\Lambda b\Bigr)[a_\Lambda c]. 
\end{align}

\item (\emph{Wick formula}) For any $a,b,c \in V$, 
\begin{align}\label{eq:KVA:Wick}
 [a_\Lambda bc] = [a_\Lambda b]c + (-1)^{(p(a)+\ol{N})p(b)} b[a_\Lambda c] 
                  + \int_0^\lambda d\Gamma [[a_\Lambda b]_\Gamma c], 
\end{align}
where $\Gamma$ is an additional $(1|N)_K$-supervariable. 
\end{clist}

For simplicity, we say $(V,\nabla)$, or more simply $V$, is a non-unital $N_K=N$ SUSY vertex algebra. The linear map $\mu$ is called the \emph{multiplication} of $V$. 
\end{dfn}

\begin{dfn}
A non-unital $N_K=N$ SUSY vertex algebra $V$ is called an \emph{$N_K=N$ SUSY vertex algebra} if there exists an even element $\vac\in V$ such that $a\vac=\vac a=a$ for all $a\in V$. 
\end{dfn}

\begin{eg}[{\cite[Example 5.8]{HK}}]
Let $B_N$ be the $N_K=N$ SUSY vertex algebra generated by $\Psi$ of parity $\ol{N}$ whose OPE is 
\begin{align*}
 [\Psi_\Lambda\Psi]=
 \begin{cases}
  \Lambda^{1|N}\vac & (\ol{N}=\ol{0}), \\
  \Lambda^{0|N}\vac & (\ol{N}=\ol{1}). 
 \end{cases}
\end{align*}
When $N=1$, expand the corresponding superfield
\begin{align*}
 \Psi(Z) = \varphi(z)+\theta\alpha(z), 
\end{align*}
then we have
\begin{align*}
 [\varphi_\lambda \varphi]=\vac, \quad [\alpha_\lambda \alpha]=\lambda. 
\end{align*}
These are the OPEs of well-known boson-fermion system. 
\end{eg}

In the remaining of this \cref{ss:K:VA}, let $(V, \nabla)$ be a left $\clH_K$-supermodule and $\Lambda_k=(\lambda_k, \theta_k^1, \ldots, \theta_k^N)$ be a $(1|N)_K$-supervariable for each $k\in\bbZ_{>0}$. 

\cref{lem:rWick}, \cref{lem:lsym} and their proofs are valid for the $N_K=N$ case by replacing all $(1|N)_W$-supervariables with $(1|N)_K$-supervariables. Also, \cref{lem:intbra} and its proof are valid by replacing supervariables, and hence we have the integral of $\Lambda$-bracket $\int^\Lambda d\Gamma\colon V\otimes V\to V[\Lambda]$ for the $(1|N)_K$-supervariable $\Lambda$. 
Using this integral, one can similarly prove \cref{prp:W:skecom} and \cref{prp:W:Jqas} with $(1|N)_W$-supervariables replaced by $(1|N)_K$-supervariables. 

\begin{dfn}
Let $n\in\bbZ_{>0}$ and $Z_k=(z_k, \zeta_k^1, \ldots, \zeta_k^N)$ be a $(1|N)_W$-supervariable for each $k\in[n]$. We set
\begin{align*}
z_{k, l}\ceq z_k-z_l-\sum_{i=1}^N\zeta_k^i\zeta_l^i, \quad
\zeta_{k, l}^i\ceq \zeta_k^i-\zeta_l^i
\end{align*}
for $k, l\in[n]$ and $i\in[N]$. Also, we set $Z_{k, l}\ceq (z_{k, l}, \zeta_{k, l}^1, \ldots, \zeta_{k, l}^N)$ for simplicity. 
\begin{enumerate}
\item 
We denote by 
\begin{align*}
 \clO_n^{\star}\ceq \bbK[Z_k]_{k=1}^n[z_{k, l}^{-1}]_{1\le k<l\le n}
\end{align*}
the localization of $\bbK[Z_k]_{k=1}^n$ by the multiplicatively closed set 
generated by $\{z_{k, l}\mid 1\le k<l\le n\}$. 

\item 
Let $\clO_n^{\star\rmT}$ denote the sub-superalgebra of $\clO_n^{\star}$ generated by the subset $\{z_{k, l}^{\pm 1}\mid 1\le k<l\le n\}\cup \{\zeta^i_{k, l}\mid i\in[N], 1\le k< l\le n\}$. Thus, we have
\begin{align*}
 \clO_n^{\star\rmT}=\bbK[z_{k, l}^{\pm1}, \zeta_{k, l}^i\mid i\in[N], 1\le k<l\le n]. 
\end{align*}


\item 
Let $\clD_n$ denote the superalgebra of regular differential operators of $Z_k$, i.e, the sub-superalgebra of $\End_\bbK \clO^{\star}_n$ generated by $Z_k=(z_k, \zeta_k^1, \ldots, \zeta_k^N)$ and $D_{Z_k}=(\pdd_{z_k}, D_{\zeta_k^1}, \ldots, D_{\zeta_k^N})$ for $k\in[n]$. As a superspace, we have
\begin{align*}
 \clD_n = \bbK[Z_k]_{k=1}^n[D_{Z_k}]_{k=1}^n = 
 \bbK[z_k,\zeta_k^1,\ldots,\zeta_k^N]_{k=1}^n[\pdd_{z_k}, D_{\zeta_k^1},\ldots,D_{\zeta_k^N}]_{k=1}^n.
\end{align*}

\item 
Let $\clD_n^\rmT$ denote the sub-superalgebra of $\clD_n$ generated by the subset 
$\{z_{k,l}, \zeta_{k,l}^i \mid i \in [N], 1\le k<l\le n\} \cup 
 \{\pdd_{z_k}, D_{\zeta_k^i} \mid i \in [N], k\in[n]\}$. Thus we have
\begin{align*}
 \clD_n^\rmT=\bbK[Z_{k, l}]_{1\le k<l\le n}[D_{Z_k}]_{k=1}^n. 
\end{align*}
\end{enumerate}
By convention, we denote $\clO_0^{\star}=\clO_0^{\star\rmT}=\clD_0=\clD_0^{\rmT} \ceq \bbK$. 
\end{dfn}



The superspace $V^{\otimes n}\otimes \clO_n^{\star\rmT}$ carries a structure of a right $\clD_n^{\rmT}$-supermodule by letting $Z_k=(z_k, \zeta_k^1, \ldots, \zeta_k^N)$ act as 
\begin{align*}
(v\otimes f)\cdot z_k\ceq v\otimes fz_k, \quad 
(v\otimes f)\cdot \zeta_k^i \ceq v\otimes f\zeta_k^i,
\end{align*}
and $D_{Z_k}=\bigl(\pdd_{z_k}, D_{\zeta_k^1}, \ldots, D_{\zeta_k^N}\bigr)$ act as
\begin{align*} 
 &(v\otimes f) \cdot \pdd_{z_k}\ceq T^{(k)} v \otimes f - v \otimes \pdd_{z_k}f, \\
 &(v\otimes f) \cdot D_{\zeta_k^i} \ceq 
 (-1)^{p(v)+p(f)} (S^i)^{(k)} v \otimes f - (-1)^{p(f)} v \otimes D_{\zeta_k^i}f
\end{align*}
for $v \in V^{\otimes n}$, $f \in \clO_n^{\star\rmT}$. Here, for a linear transformation $\varphi \in \End V$, the symbol $\varphi^{(k)}$ denotes the linear transformation on $V^{\otimes n}$ defined by $\varphi^{(k)} \ceq \id_V \otimes \dotsb \otimes \overset{k}{\varphi} \otimes \dotsb \otimes \id_V$.

Now let us recall the superspace $V_\nabla[\Lambda_k]_{k=1}^n$ in \eqref{eq:LCA:HKn-mod}. It also has a structure of right $\clD_n^{\rmT}$-supermodule by letting 
\begin{align*}
&(a(\Lambda_1, \ldots, \Lambda_n)\otimes v) \cdot z_{k, l}
 \ceq (\pdd_{\lambda_l}-\pdd_{\lambda_k})a(\Lambda_1, \ldots, \Lambda_n)\otimes v, \\
&(a(\Lambda_1, \ldots, \Lambda_n)\otimes v) \cdot \zeta_{k, l}^i 
 \ceq (-1)^{p(a)+p(v)}(D_{\theta_l^i}-D_{\theta_k^i})a(\Lambda_1, \ldots, \Lambda_n)v, 
\end{align*}
and
\begin{align*}
&\bigr(a(\Lambda_1, \ldots, \Lambda_n)\otimes v\bigr)\cdot\pdd_{z_k}
 \ceq -\lambda_ka(\Lambda_1, \ldots, \Lambda_n)\otimes v, \\
&\bigl(a(\Lambda_1, \ldots, \Lambda_n)\otimes v\bigr)\cdot D_{\zeta_k^i}
 \ceq -(-1)^{p(a)+p(v)}\theta_k^i a(\Lambda_1, \ldots, \Lambda_n)\otimes v. 
\end{align*}
for $a(\Lambda_1, \ldots, \Lambda_n) \in \bbK[\Lambda_k]_{k=1}^n$ and $v \in V$. 

\begin{dfn}
For a left $\clH_K$-supermodule $V=(V,\nabla)$ and $n \in \bbN$, we denote
\begin{align*}
 \oPchK{V}(n) \ceq \Hom_{\clD_n^\rmT}
 \bigl(V^{\otimes n} \otimes \clO_n^{\star \rmT},V_{\nabla}[\Lambda_k]_{k=1}^n\bigr). 
\end{align*}
In other words, $\oPchK{V}(n)$ is a linear sub-superspace of $\Hom_\bbK\bigl(V^{\otimes n} \otimes \clO_n^{\star \rmT},V_{\nabla}[\Lambda_k]_{k=1}^n\bigr)$ spanned by elements $X$ such that 
\begin{align*}
 X((v \otimes f) \cdot \varphi) = X(v \otimes f) \cdot \varphi \quad 
 (v \in V^{\otimes n}, \, f \in \clO_n^{\star\rmT}, \, \varphi \in \clD_n^{\rmT}). 
\end{align*}
To stress the variables $\Lambda_1,\ldots,\Lambda_n$, we express an element $X \in \oPchK{V}(n)$ as $X_{\Lambda_1,\ldots,\Lambda_n}$. 
\end{dfn}

As in \cref{ss:W:VA} for the $N_W=N$ case, define the action of $\frS_n$ and the composition map, then we have a superoperad $\oPchK{V} \ceq \bigl(\oPchK{V}(n)\bigr)_{n\in\bbN}$. 

\begin{dfn}\label{dfn:K:Pch}
For a left $\clH_K$-supermodule $V=(V,\nabla)$, we call the superoperad $\oPchK{V}$ the \emph{$N_K=N$ SUSY chiral operad}. 
\end{dfn}

The following is the main statement of this \cref{ss:K:VA}, which is a natural $N_K=N$ SUSY analogue of \cite[Theorem 6.12]{BDHK}. We can prove similarly to the $N_W=N$ case \cref{thm:W:VA} with minor modification. 

\begin{thm}\label{thm:K:VA}
Let $(V, \nabla)=(V, T, S^1, \ldots, S^N)$ be a left $\clH_K$-supermodule. 
\begin{enumerate}
\item 
For an odd Maurer-Cartan solution $X\in \MC\bigl(L(\oPchK{\Pi^{N+1}V})\bigr)_{\od}$, define linear maps $[\cdot_\Lambda\cdot]_X\colon V \otimes V \to V[\Lambda]$ and $\mu_X\colon V \otimes V \to V$ by 
\begin{align}
 \label{eq:K:VAlbra}
 [a_\Lambda b]_X
&\ceq (-1)^{p(a)(\ol{N}+\od)}X_{\Lambda, -\Lambda-\nabla}(a \otimes b \otimes 1_{\bbK}), \\
 \label{eq:K:VAprod}
 \mu_X(a\otimes b)
&\ceq (-1)^{p(a)(\ol{N}+\od)+\od} \Res_\Lambda \bigl(
  \lambda^{-1}X_{\Lambda,-\Lambda-\nabla}(a \otimes b \otimes z_{1,2}^{-1})\bigr)
\end{align}
for $a, b\in V$. Then $(V, \nabla, [\cdot_\Lambda\cdot]_X, \mu_X)$ is a non-unital $N_K=N$ SUSY vertex algebra. 

\item 
The map $X \mto ([\cdot_\Lambda\cdot]_X, \mu_X)$ gives a bijection
\begin{align*}
 \MC\bigl(L(\oPchK{\Pi^{N+1}V})\bigr)_{\od}
 \lsto \{\text{non-unital $N_K=N$ SUSY vertex algebra structures on $(V,\nabla)$}\}.
\end{align*}
\end{enumerate}
\end{thm}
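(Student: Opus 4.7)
The plan is to mirror the proof of \cref{thm:W:VA} under the dictionary spelled out at the beginning of \cref{ss:K:VA}: replace $\clH_W$ by $\clH_K$, replace all bracket/integral supervariables by $(1|N)_K$-supervariables, and replace $z_{k,l}$ and $\pdd_{\zeta_k^i}$ by their $N_K$ counterparts in \eqref{eq:K:Z-W}. Write $\wt{V}\ceq\Pi^{N+1}V$ and $\oP\ceq\oPchK{\wt{V}}$, and let $p,\wt{p}$ be the parities of $V,\wt{V}$. For $X\in\oP(2)$, define $[\cdot_\Lambda\cdot]_X$ and $\mu_X$ by \eqref{eq:K:VAlbra}--\eqref{eq:K:VAprod}.

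First I would verify the ``linear'' part of the bijection, i.e.\ without the Maurer-Cartan condition. A parity count identical to the $N_W$ case shows that $[\cdot_\Lambda\cdot]_X$ has parity $\ol N$ and $\mu_X$ is even. The sesquilinearity \eqref{eq:Ksesq} and the derivation property \eqref{eq:KVA:der} follow from the right $\clD_2^{\rmT}$-linearity of $X$, now using the odd derivations $D_{\zeta_k^i}$ of \eqref{eq:K:Z-W} in place of $\pdd_{\zeta_k^i}$; the computations are the same as in \cref{thm:W:VA} except that the relation $S^iS^j+S^jS^i=2\delta_{i,j}T$ (and the corresponding $\theta$-relations in $\bbK[\Lambda]$) must be tracked. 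Then, via the $N_K$ analogue of \cref{lem:intbra}, one rewrites
\[
 \int^\Lambda d\Gamma[a_\Gamma b]_X=(-1)^{p(a)(\ol N+\od)+\od}X_{\Lambda,-\Lambda-\nabla}(a\otimes b\otimes z_{1,2}^{-1}),
\]
with $z_{1,2}=z_1-z_2-\sum_i\zeta_1^i\zeta_2^i$. This establishes a bijection between $\oP(2)_{\od}$ and pairs $([\cdot_\Lambda\cdot],\mu)$ satisfying (i) of \cref{dfn:K:LCA} and (i) of \cref{dfn:NKVA}. Next, the skew-symmetry \eqref{eq:Kssym} together with the quasi-commutativity \eqref{eq:KVA:qcom} is shown to be equivalent to $X^\sigma=X$ for $\sigma\in\frS_2$, by invoking the $N_K$ version of \cref{prp:W:skecom}; the only change is that $z_{2,1}=-z_{1,2}$ in the $N_K$ setting as well, so the sign bookkeeping copies over verbatim.

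The substantive step is showing that, assuming (i) and (ii) on both sides, the remaining axioms (Jacobi, quasi-associativity, Wick) are equivalent to $X\square X=0$. I would proceed exactly as in the $N_W$ case: use the $N_K$ analogue of \cref{prp:W:Jqas} to recast all three axioms into the single integral identity \eqref{eq:W:Jqas}, and then prove the $N_K$ analogue of \cref{lem:boxJqW}, which identifies this single identity with the vanishing of $(X\square X)(a\otimes b\otimes c\otimes z_{1,3}^{-1}z_{2,3}^{-1})$ up to a sign. Conversely, to derive $X\square X=0$ from Jacobi/quasi-associativity/Wick, it suffices to show that
\[
 \Res_{\Lambda_1}\Res_{\Lambda_2}\bigl(\lambda_1^{-1}\lambda_2^{-1}\pdd_{\lambda_1}^n(X\square X)(a\otimes b\otimes c\otimes z_{1,2}^{-1}z_{1,3}^{-1}z_{2,3}^{-1})\bigr)=0
\]
for all $n\ge 0$, which reduces, by the $N_K$ versions of \cref{lem:W:circ} and \cref{lem:ResF01,lem:W:ResFn}, to the quasi-associativity (for $n=0,1$) and to the right Wick formula (for $n\ge 2$) via the Jacobi identity.

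The main obstacle is the bookkeeping in the $N_K$ analogue of \cref{lem:W:circ}: the Taylor expansion used in the computation
\[
 F(\Lambda_1-\pdd_{Z_1})\otimes c\otimes z_{1,3}^{-1}z_{2,3}^{-1}\bigl|_{Z_2=Z_1}
\]
must now be performed with $z_{k,l}=z_k-z_l-\sum_i\zeta_k^i\zeta_l^i$ and with derivations $D_{\zeta_k^i}$ that do not commute (indeed $D_{\zeta_k^i}D_{\zeta_k^j}+D_{\zeta_k^j}D_{\zeta_k^i}=2\delta_{i,j}\pdd_{z_k}$, mirroring the $\clH_K$-relations). The relevant identity $e^{-\nabla\cdot\pdd_{\Lambda_1}}=e^{-T\pdd_{\lambda_1}-\sum_iS^i\pdd_{\theta^i_1}}$ still acts as the translation $\Lambda_1\mapsto\Lambda_1-\nabla$ on $\bbK[\Lambda_1]$ because in $\bbK[\Lambda]$ the generators satisfy $\theta^i\theta^j+\theta^j\theta^i=-2\delta_{i,j}\lambda$, matching $\clH_K$ perfectly under \eqref{eq:HKKLam}. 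Once this compatibility is recorded, every estimate in \cref{lem:W:circ}, the explicit form of $F_0$ and $F_{n+2}$, and the final residue calculations transcribe with only sign modifications, and the theorem follows.
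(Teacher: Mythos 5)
Your proposal follows exactly the route the paper takes: the paper's proof of \cref{thm:K:VA} consists precisely of invoking the dictionary stated at the start of \cref{ss:K:VA} (replace $\clH_W$ by $\clH_K$, replace the bracket supervariables by $(1|N)_K$-supervariables, and replace $z_{k,l}$, $\pdd_{\zeta_k^i}$ by \eqref{eq:K:Z-W}) and rerunning the proof of \cref{thm:W:VA} through the analogues of \cref{prp:W:skecom}, \cref{prp:W:Jqas}, \cref{lem:boxJqW}, \cref{lem:W:circ}, \cref{lem:ResF01} and \cref{lem:W:ResFn}. Your additional remarks on where the non-commuting $D_{\zeta_k^i}$ and the modified $z_{k,l}$ enter are consistent with the paper and correctly identify the only places requiring care.
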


\subsection{Relation to $N_K=N$ SUSY chiral algebras}\label{ss:K:CA}

We can relate our superoperad $\oPchK{}$ to the $N_K=N$ SUSY chiral algebras, as we did for the $N_W=N$ case in \cref{ss:W:CA}. We just comment the necessary modification, and suppress the full presentation.

Instead of a $(1|N)$-supercurve, we consider an oriented superconformal $(1|N)$-supercurve $X$ \cite[2.2.11]{H}. It is defined to be a $(1|N)$-supercurve equipped with the differential form $\omega$ which is locally give by 
\[
 \omega = d z + \sum_{k=1}^N \zeta^i d \zeta^i
\]
with respect to some local coordinate $Z_i=(z,\zeta^1,\dotsc,\zeta^N)$, and is well-defined up to multiplication by a function. Then, recalling \eqref{eq:K:Z-W}, we replace the big diagonal $\Delta\colon X \subset X^n$ in \eqref{eq:W:bigdiag} by the \emph{super-diagonal} $\Delta^s\colon X \subset X^n$. Using local coordinates $Z_i=(z_i,\zeta_i^1,\dotsc,\zeta_i^N)$, it is the union of the divisors (sub-superscheme of $1|0$-codimension) in $X^n$ defined by 
\[
 0 = \zeta_i-\zeta_j-\sum_{k=1}^N \zeta_i^k \zeta_j^k \quad (i \ne j \in [n]).
\]
Then we have the standard pushforward functor $\Delta^s_*$ of $\clD$-modules. Together with the open embedding $j\colon U \inj X^n$, we define 
\[
 \oPchK{\shA}(n) \ceq \Hom_{\crMod \shD_{X^n}}\bigl(j_* j^* \shA^{\boxtimes n},\Delta^s_*\shA\bigr)
\]
for a right $\shD_X$-module $\shA$. It gives rise to a superoperad $\oPchK{\shA} = \bigl(\oPchK{\shA}(n)\bigr)_{n \in \bbN}$.

The translation group $\rmT = \bbG_a \times \bbG_a^N$ acts on $\oPchK{\shA}(n)$ by the formula \eqref{eq:W:A10} with $Z_i-U$ understood as \eqref{eq:K:Z-W}. Then the superoperad $\oPchK{\shA}$ is $\rmT$-equivariant, and the $\rmT$-invariants $\oPchKT{\shA}(n) \subset \oPchK{\shA}(n)$ form a sub-operad 
\[
 \oPchKT{\shA} \subset \oPchK{\shA}.
\]
The remaining part goes similarly as the $N_W=N$ case in \cref{ss:W:CA}, and we have:

\begin{prp}
Let $V=(V,\nabla)$ be an $\clH_K$-supermodule, and $\shA$ be the corresponding weakly $\rmT$-equivariant $\clD$-module on $\bbA^{1|N}$. 
Let $\oPchK{V}$ be the $N_K=N$ SUSY chiral operad. Then there is an isomorphism of superoperads
\[
 \oPchKT{\shA} \cong \oPchK{V}.
\]
\end{prp}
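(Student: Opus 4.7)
The plan is to mimic the argument of the preceding $N_W=N$ proposition, adapting to the superconformal setting and noting that the super-diagonal $\Delta^s\colon X \inj X^n$ is of codimension $1|0$ so that the ordinary pushforward $\Delta^s_*$ applies without the quotient trick used in \cref{sss:W:CA}. First, I would compute the global sections of both $\shD_{X^n}$-modules over $X^n = (\bbA^{1|N})^n$: with $A \ceq \Gamma(X,\shA)$, one has $\Gamma(X^n, j_* j^* \shA^{\boxtimes n}) \cong \clO_n^{\star} \otimes_{\clO_n} A^{\otimes n}$, where now $\clO_n = \bbK[Z_1,\dotsc,Z_n]$ and $\clO_n^{\star}$ is its localization by the superconformal elements $z_{k,l} = z_k - z_l - \sum_i \zeta_k^i \zeta_l^i$. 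On the other side, $\Gamma(X^n, \Delta^s_*\shA) \cong A \otimes_{\clD_1} (I^s \bs \clD_n)$ where $I^s \subset \clD_n$ is the left ideal generated by $z_1 - z_k - \sum_{i=1}^N \zeta_1^i \zeta_k^i$ and $\zeta_1^i - \zeta_k^i$ for $k \in \{2,\dotsc,n\}$ and $i \in [N]$. This yields
\[
 \oPchK{\shA}(n) \cong \Hom_{\crMod \clD_n}\bigl(\clO_n^{\star} \otimes_{\clO_n} A^{\otimes n},\, A \otimes_{\clD_1}(I^s \bs \clD_n)\bigr).
\]

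Second, I would establish the equivalence between weakly $\rmT$-equivariant quasi-coherent $\shD_X$-modules on $\bbA^{1|N}$ and $\clH_K$-supermodules $(V, \nabla)$ by taking the stalk at the origin, exactly as in \eqref{eq:W:TDmod=Hmod}. The key compatibility is that the superconformal derivations $D_{\zeta^i} = \pdd_{\zeta^i} + \zeta^i \pdd_z$ satisfy $D_{\zeta^i} D_{\zeta^j} + D_{\zeta^j} D_{\zeta^i} = 2\delta_{i,j}\pdd_z$, which exactly matches the $\clH_K$-relations $S^i S^j + S^j S^i = 2\delta_{i,j} T$. Conversely, given $(V,\nabla)$, the $\clD_1$-module structure on $V[Z]$ is defined by letting $\pdd_z$ and $D_{\zeta^i}$ act as $T - \pdd_z$ and $S^i - D_{\zeta^i}$, respectively.

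Third, under this equivalence, the $\clD_n$-module $A \otimes_{\clD_1} (I^s \bs \clD_n)$ is identified with $V \otimes_{\bbK[\nabla]} \bbK[Z][\nabla_1,\dotsc,\nabla_n]$ where each $\nabla_k = (T_k, S_k^1, \dotsc, S_k^N)$ satisfies the $\clH_K$-relations (acting via $D_{Z_k}$ on the right and via $\sum_k \nabla_k$ on the left). Taking $\rmT$-invariants, \eqref{eq:W:Ker-delta} gives $(\clO_n^{\star} \otimes V^{\otimes n})^{\rmT} = \clO_n^{\star \rmT} \otimes V^{\otimes n}$, and the $N_K$ analogue of \cref{lem:K:VnLn=VLn-1} yields
\[
 \bigl(V \otimes_{\bbK[\nabla]} \bbK[Z][\nabla_1,\dotsc,\nabla_n]\bigr)^{\rmT}
 = V \otimes_{\bbK[\nabla]} \bbK[\nabla_1,\dotsc,\nabla_n] \cong V_\nabla[\Lambda_k]_{k=1}^n
\]
via $v \otimes f(\nabla_1,\dotsc,\nabla_n) \mto f(-\Lambda_1,\dotsc,-\Lambda_n)v$. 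Hence $\oPchKT{\shA}(n) \cong \Hom_{\clD_n^{\rmT}}\bigl(\clO_n^{\star\rmT} \otimes V^{\otimes n},\, V_\nabla[\Lambda_k]_{k=1}^n\bigr) = \oPchK{V}(n)$, and the compatibility with composition, unit and symmetric group actions is formal from the construction. The inverse map reconstructing a $\rmT$-equivariant operation from its value at the origin is given by a Taylor expansion using the superconformal derivations $D_{\zeta_k^i}$, exactly as in the proof of \cite[Lemma A.1]{BDHK}.

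The main obstacle is the third step: confirming that the replacement of ordinary partials $\pdd_{\zeta_k^i}$ by the superconformal derivations $D_{\zeta_k^i}$ is properly compatible with both the $\clD_n^{\rmT}$-action on the localization $\clO_n^{\star\rmT}$ (where $\zeta_{k,l}^i = \zeta_k^i - \zeta_l^i$ and $z_{k,l}$ involves a quadratic correction in the $\zeta$'s) and the description of $\Delta^s_*\shA$ via the superconformal ideal $I^s$. Once one checks that the $\rmT$-invariant Taylor expansion employs precisely the operators $D_{Z_k}$ generating $\clD_n^{\rmT}$, and that the super-diagonal ideal $I^s$ reproduces the $\bbK[\nabla]$-quotient after passing to invariants, the remainder of the argument is a routine adaptation of the $N_W=N$ case.
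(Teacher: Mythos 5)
Your proposal is correct and follows essentially the same route as the paper, which simply transports the $N_W=N$ argument of \cref{ss:W:CA} (itself an outline of \cite[Lemma A.1]{BDHK}) with the substitutions $z_{k,l}\mapsto z_k-z_l-\sum_i\zeta_k^i\zeta_l^i$, $\pdd_{\zeta^i}\mapsto D_{\zeta^i}$ and $\clH_W\mapsto\clH_K$; your explicit check that $D_{\zeta^i}D_{\zeta^j}+D_{\zeta^j}D_{\zeta^i}=2\delta_{i,j}\pdd_z$ matches the $\clH_K$-relations, and your description of the superconformal ideal $I^s$, are exactly the modifications the paper leaves implicit.
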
 

\section{Cohomology of SUSY vertex algebras}\label{s:coh}

\subsection{Recollection on the Lie algebra cohomology}\label{ss:coh:CE}

Here we give a brief recollection on the Chevalley-Eilenberg cohomology of a Lie algebra from the viewpoint of the operad theory. For the detail, we refer to \cite[Chap.\ 12, \S 13.2]{LV}. 

We use the language of operads reviewed in \cref{s:op}, although in this subsection we work in the non-super setting. Let $V$ be a linear space. Recall the convolution Lie algebra 
\[
 \frg_{\oLie,V} = \bigl(\Hom_{\frS}(\oLie^{\ash},\oHom_V),[\cdot,\cdot]\bigr)
\]
in \eqref{eq:1:gPV} and the set of solutions of the Maurer-Cartan equation
\[
 \MC(\frg_{\oLie,V}) = \{X \in \frg_{\oLie,V}^1 \mid \tfrac{1}{2}[X,X]=0\},
\]
both being specialized for the Lie operad $\oP = \oLie$. A Maurer-Cartan solution $X \in \MC(\frg_{\oLie,V})$ corresponds bijectively to a Lie algebra structure $[\cdot,\cdot]_X$ on $V$ . 

Given a Maurer-Cartan solution $X \in \MC(\frg_{\oLie,V})$, we have a differential 
\[
 d_X \ceq [X,-]\colon \frg_{\oLie,V}^{\bl} \lto \frg_{\oLie,V}^{\bl+1}, \quad d_X^2=0.
\]
The obtained cochain complex $(\frg_{\oLie,V}^{\bl},d_X)$ coincides with the Chevalley-Eilenberg cochain complex $C^\bl(L,L)$ of the Lie algebra $L=(V,[\cdot,\cdot]_X)$ with coefficients in itself as an adjoint module, up to a degree shift. It is known that the cochain complex $(\frg_{\oLie,V}^{\bl},d_X)$ has a Lie bracket, giving a dg Lie algebra structure, and that the Lie bracket is essentially equal to the Nijenhuis-Richardson bracket on the Chevalley-Eilenberg cochain complex.

We can extend the construction of the cochain complex $(\frg_{\oLie,V}^{\bl},d_X)$ to the version with coefficients in a module over the Lie algebra $(V,[\cdot,\cdot]_X)$ along the line of the general argument for a quadratic operad in \cite[\S12.4]{LV}. Let us give a brief recollection.
\begin{itemize}
\item 
First, for an operad $\oP$ and a linear space $V$, we have an isomorphism
\begin{align}\label{eq:coh:PV}
 \Hom_{\frS}(\oP,\oHom_V) \cong C_{\oP}(V,V) \ceq \Hom(\oP(V),V)
\end{align}
by the hom-tensor adjunction and \eqref{eq:1:HomV}, where 
\[
 \oP(V) \ceq \bigoplus_{n \ge 0}\oP(V)(n) \otimes_{\frS_n} V^{\otimes n}
\]
is the image of the Schur functor of the operad $\oP$ \cite[\S 5.1.2]{LV}. Thus, a $\oP$-algebra structure $\varphi\colon \oP \to \oHom_V$ can be encoded by $C_{\oP}(V,V)$. The linear space $C_{\oP}(V,V)$ has an $\bbN$-grading with $C_{\oP}^n(V,V) \ceq \Hom(\oP(V)(n) \otimes_{\frS_n} V^{\otimes n},V)$.

\item
Next, recall from \cite[\S 12.3.1]{LV} that a module $M$ over a $\oP$-algebra $(V,\varphi)$ is defined to be a linear space equipped with two morphisms $\gamma_M\colon \oP \circ (V;M) \to M$ and $\eta_M\colon M \to \oP \circ (V;M)$ satisfying some axioms. Here we used the composite product \cite[\S6.1.1]{LV}
\[
 \oP \circ (V;M) \ceq \bigoplus_{n \ge 0}\oP(n) \otimes_{\frS_n} \Bigl(
 \bigoplus_{1 \le i \le n} V^{\otimes i} \otimes M \otimes V^{\otimes n-i}\Bigr).
\]
As it implies, the notion of modules over a $\oP$-algebra is a natural operadic analogue of bimodules over the algebra structure considered.

\item
Following \cite[\S 12.4.1]{LV}, let $\oP$ be a homogeneous quadratic operad, $V$ be a linear space equipped with a $\oP$-algebra structure, and $M$ be a module over the $\oP$-algebra $V$. Let us consider the linear space 
\begin{align}\label{eq:coh:CVM}
 C_{\oP}(V,M) \ceq \Hom(\oP^{\ash}(V),M)
\end{align}
which has the natural $\bbN$-grading induced by that on $\oP^{\ash}(V)$. In the case $M=V$, this graded space coincides with $C_{\oP}^{\bl}(V,V)$ in \eqref{eq:coh:PV}. According to \cite[\S 12.4.1]{LV}, the graded space $C_{\oP}^{\bl}(V,M)$ has a natural differential $d$ coming from the $\oP$-algebra structure on $V$ and the $V$-module structure on $M$, and we obtain a cochain complex $(C_{\oP}^{\bl}(V,M),d)$ called the \emph{operadic cochain complex}. 

\item
The cohomology of the operadic cochain complex $(C_{\oP}^{\bl}(V,M),d)$ is denoted by $H^{\bl}_{\oP}(A,M)$. The low degree cohomology have analogous interpretation as the classical cohomology. For example, we have $H^0_{\oP}(A,M) \cong \Der_A(A,M)$, the space of derivations. We refer to \cite[\S 12.4]{LV} for the detail.

\item
In the case $\oP=\oLie$, the operadic cochain complex $(C_{\oP}(V,M),d)$ coincides, up to a degree shift, with the Chevalley-Eilenberg cochain complex of the Lie algebra $V$ with coefficients in the $V$-module $M$. Similarly, we can recover the Harrison cochain complex in the case $\oP=\oCom$, and 
recover the Hochschild cochain complex in the case $\oP=\oAss$.
\end{itemize}

In \cite[\S7]{BDHK}, the authors introduced a natural analogue of the Chevalley-Eilenberg complex for vertex algebras, based on their operad $\oPch{V}$. As explained in \cref{s:op}, in order to have a vertex algebra analogue in the operad theory, we should keep $\oLie$ as it is, and replace the endomorphism operad $\oHom_V$  by the chiral operad $\oPch{V}$. The construction in loc.\ cit.\ can be understood as such a replacement in \eqref{eq:coh:PV} and \eqref{eq:coh:CVM}.
We will not give the detail of the construction in \cite[\S7]{BDHK}. Instead, we explain a straightforward modification for SUSY vertex algebras in the following \cref{ss:coh:W} and \cref{ss:coh:K}.

\subsection{Cohomology of $N_W=N$ SUSY vertex algebras}\label{ss:coh:W}

In this subsection, we introduce the cochain complex of an $N_W=N$ SUSY vertex algebra with coefficients in its module, and investigate the low degree cohomology. First, we introduce modules over an $N_W=N$ SUSY vertex algebra.

\begin{dfn}\label{dfn:W:LCAmod}
Let $(V, \nabla, [\cdot_\Lambda\cdot])$ be an $N_W=N$ SUSY Lie conformal algebra, $(M,\nabla)$ be an $\clH_W$-supermodule and $\rho_\Lambda\colon V \otimes M \to M[\Lambda]$ be a linear map of parity $\ol{N}$. We denote 
\begin{align}\label{eq:coh:aLx}
 a_\Lambda x \ceq \rho_\Lambda(a \otimes x)
\end{align}
for $a \in V$ and $x \in M$. A triple $(M,\nabla,\rho_\Lambda)$ is called a \emph{module over $(V,\nabla,[\cdot_\Lambda\cdot])$} if the following conditions are satisfied. 
\begin{clist}
\item For any $a \in V$ and $x \in M$, 
\begin{align}\label{eq:Lmodsesq}
\begin{aligned}
&  (Ta)_\Lambda     x  = - \lambda a_\Lambda x, & 
&     a_\Lambda   (Tx) =  (\lambda+T)a_\Lambda x,\\
&(S^ia)_\Lambda     x  = -(-1)^N \theta^i a_\Lambda x, & 
&     a_\Lambda (S^ix) =  (-1)^{p(a)+\ol{N}}(\theta^i+S^i)a_\Lambda x \quad (\forall i \in [N]).
\end{aligned} 
\end{align}

\item For any $a, b\in V$ and $x\in M$, 
\begin{align}\label{eq:LmodJ}
a_{\Lambda_1}(b_{\Lambda_2}x)
=(-1)^{(p(a)+\ol{N})\ol{N}}[a_{\Lambda_1}b]_{\Lambda_1+\Lambda_2}x
+(-1)^{(p(a)+\ol{N})(p(b)+\ol{N})}b_{\Lambda_2}(a_{\Lambda_1}x),
\end{align}
where $\Lambda_1, \Lambda_2$ are $(1|N)_W$-supervariables. 
\end{clist}

For simplicity, we say $(M, \nabla)$, or more simply $M$, is a module over $V$. The map $\rho_\Lambda$ is called \emph{the} (\emph{left}) \emph{$\Lambda$-action on $M$}. 
\end{dfn}

\begin{dfn}\label{dfn:W:VAmod}
Let $(V, \nabla, [\cdot_\Lambda\cdot],\mu)$ be a non-unital $N_W=N$ SUSY vertex algebra, $(M,\nabla,\rho_\Lambda)$ be a module over $N_W=N$ Lie conformal algebra $(V,\nabla,[\cdot_\Lambda \cdot])$ and $\rho\colon V \otimes M \to M$ be an even linear map. We denote 
\begin{align}\label{eq:coh:adx}
 a \cdot x \ceq \rho(a \otimes x)
\end{align}
for $a \in V$ and $x \in M$. A tuple $(M,\nabla,\rho_\Lambda,\rho)$ is called a \emph{module over $(V,\nabla,[\cdot_\Lambda\cdot],\mu)$} if it satisfies the following conditions: 
\begin{clist}

\item 
For any $a \in V$ and $x \in M$,
\begin{align}\label{eq:VAmodder}
   T(a \cdot x) =   (Ta) \cdot x + a \cdot (Tx), \quad
 S^i(a \cdot x) = (S^ia) \cdot x + (-1)^{p(a)}a \cdot (S^ix) \quad (i \in [N]). 
\end{align}

\item 
For any $a, b\in V$ and $x\in M$, 
\begin{align*}
 (ab) \cdot x
=a\cdot(b\cdot x)+ \Bigl(\int_0^T d\Lambda a\Bigr)\cdot b_\Lambda x
+(-1)^{p(a)p(b)}\Bigl(\int_0^T d\Lambda b\Bigr)\cdot a_\Lambda x. 
\end{align*}

\item For any $a, b\in V$ and $x\in M$, 
\begin{align*}
 a_\Lambda(b \cdot x)
=[a_\Lambda b] \cdot x + (-1)^{(p(a)+\ol{N})p(b)}b \cdot (a_\Lambda x) + 
 \int_0^\lambda d\Gamma[a_\Lambda b]_\Gamma x. 
\end{align*}
\end{clist}

For simplicity, we say $(M, \nabla)$, or more simply $M$, is a module over $V$. 
\end{dfn}

\begin{rmk}
For a module $M$ over a non-unital $N_W=N$ SUSY vertex algebra $V$, we define a right action of $V$ by
\begin{align}\label{eq:ract}
 x_\Lambda a \ceq -(-1)^{p(a)p(x)+\ol{N}} a_{-\Lambda-\nabla}x, \quad 
 x \cdot   a \ceq  (-1)^{p(a)p(x)}        a \cdot x \quad (a \in V, \, x \in M).
\end{align}
Then $M$ is a ``right'' module over $V$. 
\end{rmk}

In the remaining of this \cref{ss:coh:W}, we fix a non-unital $N_W=N$ SUSY vertex algebra $(V, \nabla, [\cdot_\Lambda\cdot], \mu)$, and the word `$V$-module' always means a module over non-unital SUSY vertex algebra $(V, \nabla, [\cdot_\Lambda\cdot], \mu)$, not a module over $N_W=N$ SUSY Lie conformal algebra $(V, \nabla, [\cdot_\Lambda\cdot])$. 

Let $M$ and $N$ be $V$-modules. An even homomorphism $\varphi\colon M\to N$ of $\clH_W$-supermodule is called a $V$-module morphism if it satisfies
\begin{align*}
\varphi(a_\Lambda x)=a_\Lambda \varphi(x), \quad
\varphi(a\cdot x)=a\cdot \varphi(x)
\end{align*}
for any $a\in V$ and $x\in M$. 

Now, we introduce the indefinite integral of the $\Lambda$-action.

\begin{lem}\label{lem:intact}
Let $M$ be an $\clH_W$-supermodule. For a linear map $\rho_\Lambda\colon V\otimes M\to M[\Lambda]$ of parity $\ol{N}$ satisfying \eqref{eq:Lmodsesq} and an even linear map $\rho\colon V\otimes M\to M$, there exists a unique linear map $F\colon V\otimes M\to M[\Lambda]$ of parity $\ol{N}$ such that 
\begin{align*}
&\Res(\lambda^{-1}F(S^ia \otimes x)) = 
 (-1)^{N+1}\Res_\Lambda(\lambda^{-1}\theta^iF(a \otimes x)) \quad (i \in [N]), \\
&\pdd_\lambda F(a \otimes x) = a_\Lambda x, \quad 
 \Res(\lambda^{-1}F(a \otimes x)) = a \cdot x
\end{align*}
for every $a \in V$ and $x \in M$. Here we used the notations \eqref{eq:coh:aLx} and \eqref{eq:coh:adx}.
\end{lem}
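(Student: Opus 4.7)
The plan is to imitate the construction and verification used for \cref{lem:intbra} essentially verbatim, with the codomain $V[\Lambda]$ replaced by $M[\Lambda]$, the $\Lambda$-bracket $[\cdot_\Lambda \cdot]$ replaced by the $\Lambda$-action $\rho_\Lambda$, and the multiplication $\mu$ replaced by the module action $\rho$. The reason this transplantation works is that the three defining conditions only involve the $\clH_W$-supermodule structure of $M$ (through sesquilinearity) together with the even linear map $\rho$ and the parity-$\ol{N}$ map $\rho_\Lambda$; these are structurally the same data as $(V, \mu, [\cdot_\Lambda\cdot])$ used in \cref{lem:intbra}.

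Concretely, I would define the linear map $F\colon V \otimes M \to M[\Lambda]$ of parity $\ol{N}$ by
\begin{align*}
 F(a \otimes x) \ceq
 \sum_{I \subset [N]}(-1)^{\# I(N+1)}\sigma(I)\theta^{[N] \bs I}(S^I a) \cdot x
 +\int_0^\lambda d\lambda' \, a_{\Lambda'} x,
\end{align*}
where $\Lambda' = (\lambda', \theta^1, \ldots, \theta^N)$ and the second integral is interpreted as in \eqref{eq:W:intLv}. The parity of $F$ is $\ol{N}$ because in the first summand each term pairs $S^I a$ of parity $p(a)+\ol{\#I}$ with $\theta^{[N]\bs I}$ of parity $\ol{N}+\ol{\#I}$ (total contribution $p(a)+\ol{N}$), and similarly the second summand has parity $p(a)+p(x)+\ol{N}$ by the parity of $\rho_\Lambda$.

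Next I would verify the three defining conditions directly: the identity $\pdd_\lambda F(a \otimes x) = a_\Lambda x$ is immediate from the fundamental theorem of calculus applied to $\int_0^\lambda d\lambda'\, a_{\Lambda'} x$, since only the $I=[N]$ summand in the first term survives differentiation by $\lambda$ and the extraction of the $\theta^{[N]}$ coefficient (it actually vanishes after $\pdd_\lambda$). The identity $\Res_\Lambda(\lambda^{-1}F(a \otimes x)) = a \cdot x$ follows by extracting the $\Lambda^{0|[N]}$-coefficient: only the $I = \emptyset$ summand contributes, with $\sigma(\emptyset)=1$ giving $\theta^{[N]}\cdot a \cdot x$. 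The $S^i$-identity is verified using the second sesquilinearity relation in \eqref{eq:Lmodsesq} applied inside the integral together with the combinatorial identity $\sigma(I\cup\{i\},[N]\bs(I\cup\{i\})) = \pm \sigma(I,[N]\bs I)$ that appears already in the proof of \cref{lem:intbra}. The signs match identically because $\rho$ is even (like $\mu$) and $\rho_\Lambda$ has parity $\ol{N}$ (like $[\cdot_\Lambda \cdot]$).

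For uniqueness, I would argue exactly as in \cref{lem:intbra}: if two linear maps $F, G\colon V \otimes M \to M[\Lambda]$ of parity $\ol{N}$ satisfy all three conditions, then iterating the $S^i$-identity yields
\begin{align*}
 \Res_\Lambda(\lambda^{-1}\theta^I F(a \otimes x))
 = \Res_\Lambda(\lambda^{-1}\theta^I G(a \otimes x)) \quad (I \subset [N]),
\end{align*}
while the derivative condition gives $\pdd_\lambda F(a \otimes x) = \pdd_\lambda G(a \otimes x)$. Since every element of $M[\Lambda]$ is reconstructed uniquely from its $\lambda$-derivative together with the residues $\Res_\Lambda(\lambda^{-1}\theta^I(\cdot))$ for $I \subset [N]$, we conclude $F = G$. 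The proof contains no genuine obstacle beyond sign bookkeeping; since the parity of all relevant data coincides with the vertex-algebra case, no new phenomena arise.
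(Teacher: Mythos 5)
Your construction is exactly the paper's: the same explicit formula $F(a\otimes x)=\sum_{I\subset[N]}(-1)^{\#I(N+1)}\sigma(I)\theta^{[N]\setminus I}(S^Ia)\cdot x+\int_0^\lambda d\lambda\,(a_\Lambda x)$, followed by the same direct verification of the three conditions and the same uniqueness argument via reconstructing an element of $M[\Lambda]$ from its $\lambda$-derivative and the residues $\Res_\Lambda(\lambda^{-1}\theta^I(\cdot))$, which is precisely how the paper proves \cref{lem:intbra} and then transplants to this module setting. Your proposal is correct and takes essentially the same route, only spelling out a few sign checks the paper leaves implicit.
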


\begin{proof}
Define a linear map $F\colon V \otimes M \to M[\Lambda]$ by
\begin{align*}
 F(a \otimes x) \ceq 
 \sum_{I \subset [N]} (-1)^{\#I(N+1)} \sigma(I) \theta^{[N] \bs I}(S^Ia)\cdot x + 
 \int_0^\lambda d\lambda (a_\Lambda x). 
\end{align*}
Then, similarly as in the proof of \cref{lem:intbra}, we can show that $F$ is the desired map. 
\end{proof}

\begin{dfn}\label{dfn:intact}
Let $M$ be an $\clH_W$-supermodule. Given a linear map $\rho_\Lambda\colon V \otimes M \to M[\Lambda]$ of parity $\ol{N}$ satisfying \eqref{eq:Lmodsesq} and an even linear map $\rho\colon V \otimes M \to M$, 
the linear map $F$ in \cref{lem:intact} is denoted by
\begin{align}\label{eq:coh:intact}
 \int^{\Lambda}d\Gamma\, (\cdot_\Gamma \cdot)\colon V \otimes M \lto M[\Lambda],
\end{align}
and called \emph{the integral of the} (\emph{left}) \emph{$\Lambda$-action $\rho_\Lambda$} (\emph{with respect to $\rho$}). 
\end{dfn}


For $V$-modules $M$ and $N$, it is easy to check that an even $\clH_W$-supermodule morphism $\varphi\colon M\to N$ is a $V$-module morphism if and only if it satisfies
\begin{align*}
 \varphi\Bigl(\int^\Lambda d\Gamma (a_\Lambda x)\Bigr) = \int^\Lambda d\Gamma (a_\Lambda \varphi(x))
\end{align*}
for any $a\in V$ and $x\in M$.

In the rest of this \cref{ss:coh:W}, let $M$ be a $V$-module over $V$ and $\Lambda_k=(\lambda_k, \theta_k^1, \ldots, \theta_k^N)$ be a $(1|N)_W$-supervariable for each $k \in \bbZ_{>0}$. 

 Let us introduce the underlying graded superspace of the cochain complex. 

\begin{dfn}
For $n \in \bbN$ and a module $M$ over a non-unital $N_W=N$ SUSY vertex algebra $V$, we define a linear superspace $C^n(V,M)$ by
\begin{align*}
 C^n(V, M) \ceq \Hom_{\clD_n^\rmT}
  \bigl(V^{\otimes n} \otimes \clO_n^{\star \rmT}, M_\nabla[\Lambda_k]_{k=1}^n\bigr)^{\frS_n}. 
\end{align*}
Note that we can naturally consider $C^n(V,M) \subset L^{n-1}\bigl(\oPchW{V \oplus M}\bigr)$ for $n \in \bbN$ (see \cref{dfn:1:LP} for the symbol $L^{n-1}$). 
\end{dfn}

Next, we want to define the differential. 
For that, let
\begin{align*}
 [\cdot_\Lambda\cdot]\colon (V \oplus M) \otimes (V \oplus M) \lto (V\oplus M)[\Lambda] \quad
 \mu\colon (V \oplus M) \otimes (V \oplus M) \lto V \oplus M
\end{align*}
be linear maps defined by
\begin{align*}
 [(a+x)_\Lambda (b+y)]  \ceq [a_\Lambda b]+a_\Lambda y+x_\Lambda b, \quad
 \mu\bigl((a+x)\otimes(b+y)\bigr) \ceq ab + a \cdot y + x \cdot b
\end{align*}
for $a,b \in V$ and $x,y \in M$, where we used the right action of $V$ defined by \eqref{eq:ract}. These yield a non-unital $N_W=N$ SUSY vertex algebra structure on the $\clH_W$-supermodule $V\oplus M$. Thus, by \cref{thm:W:VA}, we have the corresponding Maurer-Cartan solution $X \in \MC\bigl(L\bigl(\oPchW{\Pi^{N+1}(V\oplus M)}\bigr)\bigr)_{\od}$.
Since $X$ is an odd element of $ L^1\bigl(\oPchW{\Pi^{N+1}(V\oplus M)}\bigr)$, we have
\begin{align*}
 [X, Y] = X \square Y-(-1)^{\wt{p}(Y)}Y\square X
 \in L^n\bigl(\oPchW{\Pi^{N+1}(V \oplus M)}\bigr)
\end{align*}
for $Y \in C^n(\Pi^{N+1}V, \Pi^{N+1}M)$, where $\wt{p}$ denotes the parity in $\Pi^{N+1}(V\oplus M)$. 
A direct calculation shows:

\begin{lem}
For any $Y \in C^n(\Pi^{N+1}V, \Pi^{N+1}M)$, we have 
\begin{align*}
 \rst{[X,Y]}{(\Pi^{N+1} V)^{\otimes (n+1)} \otimes \clO_{n+1}^{\star \rmT}} 
 \in C^{n+1}(\Pi^{N+1}V, \Pi^{N+1}M).
\end{align*}
\end{lem}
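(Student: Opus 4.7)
The plan is to split the claim into two independent parts: (a) that the restriction is $\frS_{n+1}$-invariant, and (b) that its image lies in the $M$-component, i.e., in $(\Pi^{N+1}M)_\nabla[\Lambda_k]_{k=1}^{n+1}$ rather than in the a priori larger $(\Pi^{N+1}(V\oplus M))_\nabla[\Lambda_k]_{k=1}^{n+1}$. Claim (a) will follow automatically: by \cref{prp:1:LP} applied to $\oQ \ceq \oPchW{\Pi^{N+1}(V \oplus M)}$, the graded space $L(\oQ)$ is closed under $\square$ and hence under the supercommutator, so $[X, Y] \in L^n(\oQ)$; since the $\frS_{n+1}$-action on inputs preserves the sub-$\frS$-module $V^{\otimes(n+1)} \otimes \clO_{n+1}^{\star\rmT}$, the invariance descends to the restriction.

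The substance is claim (b). First I would exploit the direct-sum structure of $V \oplus M$: the formulas for $[(a+x)_\Lambda(b+y)]$ and $\mu\bigl((a+x) \otimes (b+y)\bigr)$ given just before the lemma contain no $M \otimes M$-contribution, so the Maurer-Cartan element $X$ decomposes as $X = X_{VV} + X_{VM} + X_{MV}$, where $X_{VV}$ encodes the SUSY vertex algebra structure on $V$ (with image in $V$) while $X_{VM}$, $X_{MV}$ encode the left and right $V$-action on $M$ (with image in $M$). In parallel, $Y \in C^n(\Pi^{N+1}V, \Pi^{N+1}M)$, once embedded into $L^{n-1}(\oQ)$ by extension by zero, vanishes on any tensor product containing an element of $M$, and its sole non-trivial component $\rst{Y}{V^{\otimes n}}$ takes values in $M_\nabla[\Lambda_k]_{k=1}^n$.

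With these decompositions in place, I would trace both terms of $[X,Y] = X \square Y - (-1)^{\wt{p}(Y)} Y \square X$ on pure $V$-inputs. In each shuffle summand of $X \square Y = \sum_{\sigma \in \frS_{n,1}}(X \circ_1 Y)^{\sigma^{-1}}$, the inner $Y$ sends $n$ inputs from $V$ to an element of $M$; then $X$ is evaluated on this $M$-output and the remaining $V$-input, and by the decomposition of $X$ only $X_{MV}$ (or, after the shuffle, $X_{VM}$) contributes, so the output lies in $M$. Similarly in $Y \square X = \sum_{\sigma \in \frS_{2,n-1}}(Y \circ_1 X)^{\sigma^{-1}}$, the inner $X$ applied to two $V$-inputs uses only $X_{VV}$, producing a $V$-output, after which the outer $Y$ maps the resulting $n$ $V$-inputs to $M$. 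The shuffle action is a mere permutation of inputs, preserving both the $V$-only domain and the $M$-valued image. Therefore the restriction lies in $C^{n+1}(\Pi^{N+1}V, \Pi^{N+1}M)$, as required. The only anticipated difficulty is the routine bookkeeping of Koszul signs and parity shifts from $\Pi^{N+1}$ and from $\wt{p}(Y)$, but no conceptual obstacle should arise beyond the decomposition observation.
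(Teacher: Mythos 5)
Your proposal is correct, and it matches the paper's (unwritten) argument: the paper disposes of this lemma with ``a direct calculation shows,'' and your decomposition of $X$ into $X_{VV}+X_{VM}+X_{MV}$ together with the automatic $\frS_{n+1}$-invariance from \cref{prp:1:LP} is exactly the calculation being alluded to. The only point left implicit is that the restriction remains a $\clD_{n+1}^{\rmT}$-module morphism, but this is immediate since $(\Pi^{N+1}V)^{\otimes(n+1)}\otimes\clO_{n+1}^{\star\rmT}$ is a $\clD_{n+1}^{\rmT}$-submodule of the full domain.
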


Thus, for each $n \in \bbN$, one can define a linear map $\pdd^n\colon C^n(\Pi^{N+1}V, \Pi^{N+1}M) \to C^{n+1}(\Pi^{N+1}V,\Pi^{N+1}M)$ by
\begin{align*}
 \pdd^n Y \ceq \rst{[X,Y]}{(\Pi^{N+1} V)^{\otimes (n+1)} \otimes \clO_{n+1}^{\star \rmT}}
 \quad \bigl(Y \in C^n(\Pi^{N+1}V, \Pi^{N+1}M)\bigr).
\end{align*}
Since $X$ satisfies $X\square X=0$, we have $\pdd^{n+1}\circ \pdd^n=0$. 
Hence we get a cochain complex 
\begin{align}\label{eq:W:comp}
 C^\bullet(\Pi^{N+1}V, \Pi^{N+1}M)\ceq(C^n(\Pi^{N+1}V,\Pi^{N+1}M), \pdd^n)_{n\in\bbZ} 
\end{align}
with convention $C^n(\Pi^{N+1}V,\Pi^{N+1}M) \ceq 0$ and $\pdd^n \ceq 0$ for $n<0$. 

Now we have the main object in this \cref{ss:coh:W}.
It is an $N_W=N$ SUSY analogue of \cite[Definition 7.3]{BDHK}. 

\begin{dfn}
Let $M$ be a module over a non-unital $N_W=N$ SUSY vertex algebra $V$. 
We call $ C^\bullet(\Pi^{N+1}V, \Pi^{N+1}M)$ the \emph{cochain complex of $V$ with coefficients in M}. The $n$-th cohomology is denoted by
\begin{align*}
 H^n_{\ch}(\Pi^{N+1}V,\Pi^{N+1}M) \ceq H^n(C^\bullet(\Pi^{N+1}V,\Pi^{N+1}M)) = \Ker\pdd^{n}/\Img\pdd^{n-1}.
\end{align*}
\end{dfn}

As explained in \cref{ss:coh:CE}, the Chevalley-Eilenberg cochain complex is obtained from the pair $(\oLie, \oHom_V)$ of operads. Since the cochain complex \cref{eq:W:comp} is obtained from $(\oLie, \oPchW{V})$, it is a natural analogue of the Chevalley-Eilenberg cochain complex. Thus, one can understand that the cohomology $H^n_{\ch}(\Pi^NV, \Pi^NM)$ is a SUSY vertex algebra analogue of the Lie algebra cohomology. 

Hereafter we fix a non-unital $N_W=N$ SUSY vertex algebra $V$ and a $V$-module $M$. 
Let us identify the SUSY vertex algebra cohomology $H^n_{\ch}(\Pi^NV, \Pi^NM)$ for $n=0,1,2$. 
First, we describe the zeroth cohomology. For that, let us introduce: 

\begin{dfn}\label{dfn:Cas}
A \emph{Casimir element of $M$} is an element $\int x\in M/\nabla M$ such that $a_{-\nabla}x=0$ for all $a\in V$. Here we set $\nabla M \ceq \Img(T+S^1+\dotsb+S^N)\subset M$, and denote $\int\colon M \to M/\nabla M$ the canonical projection. We denote by $\Cas(V,M)$ the sub-superspace of $M/\nabla M$ consisting of all Casimir elements of $M$. 
\end{dfn}

\begin{prp}\label{prp:Hch0Cas}
The map
\begin{align}\label{eq:Hch0Cas}
 C^0(\Pi^{N+1}V, \Pi^{N+1}M) \lto M/\nabla M, \quad Y \lmto Y(1_\bbK)
\end{align}
gives the following isomorphism of linear superspaces:
\begin{align*}
 H^0_{\ch}(\Pi^{N+1}V, \Pi^{N+1}M) \cong \Cas(V, M). 
\end{align*}
\end{prp}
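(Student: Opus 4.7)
The plan is to identify $C^0(\Pi^{N+1}V,\Pi^{N+1}M)$ with $M/\nabla M$ via \eqref{eq:Hch0Cas}, then compute the differential $\pdd^0$ explicitly and show that its kernel under this identification is precisely $\Cas(V,M)$. Since $\clD_0^\rmT=\bbK$ and $(\Pi^{N+1}M)_\nabla[\Lambda_k]_{k=1}^0=\Pi^{N+1}M/\nabla\Pi^{N+1}M$, the map $Y\mapsto Y(1_\bbK)$ is immediately a linear isomorphism after identifying $\Pi^{N+1}M/\nabla\Pi^{N+1}M$ with $M/\nabla M$ as underlying linear spaces. By convention $C^{-1}=0$, so $H^0_{\ch}=\Ker\pdd^0$, and the claim reduces to identifying $\Ker\pdd^0$ with $\Cas(V,M)$.

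Next, I would compute $\pdd^0 Y$ for $Y\in C^0$ corresponding to $[x]\in M/\nabla M$, with any lift $x\in M\subset V\oplus M$. Since $Y$ has arity $0$, the pre-Lie product $Y\square X$ is undefined, so $[X,Y]=X\square Y$; and $\frS_{0,1}=\{e\}$ reduces $X\square Y$ to the single term $X\circ_1 Y$. Applying the composition formula of \cref{ss:W:VA} with $m=2$ and $\nu=(0,1)$, the arity-$0$ factor $Y$ contributes no variable, so $\Lambda_1'=0$ (empty sum) while $\Lambda_2'=\Lambda_1$. Thus for $a\in V$,
\begin{align*}
 (\pdd^0 Y)(a\otimes 1_\bbK) = (-1)^{\epsilon}\, X_{0,\Lambda_1}(x\otimes a\otimes 1_\bbK)
\end{align*}
in $(\Pi^{N+1}(V\oplus M))_\nabla[\Lambda_1]$ for some sign $\epsilon$. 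Under \cref{lem:W:VnLn=VLn-1} setting $\Lambda_1=-\nabla$, and combining the Maurer-Cartan formula \eqref{eq:VAlbra} with the right-action convention \eqref{eq:ract} applied to the non-unital SUSY vertex algebra structure on $V\oplus M$, this simplifies to
\begin{align*}
 (\pdd^0 Y)(a\otimes 1_\bbK) = (-1)^{\epsilon'}[x_\Lambda a]\big|_{\Lambda=0} = (-1)^{\epsilon''}\, a_{-\nabla} x
\end{align*}
as an element of $\Pi^{N+1}M$, for appropriate signs.

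Therefore $\pdd^0 Y=0$ iff $a_{-\nabla}x=0$ for every $a\in V$, which is precisely the Casimir condition of \cref{dfn:Cas}. The well-definedness of this condition on $M/\nabla M$ follows from \eqref{eq:Lmodsesq}: substituting $\Lambda=-\nabla$ in $a_\Lambda(Tx')=(\lambda+T)a_\Lambda x'$ and $a_\Lambda(S^ix')=(-1)^{p(a)+\ol N}(\theta^i+S^i)a_\Lambda x'$ gives $a_{-\nabla}(Tx')=0$ and $a_{-\nabla}(S^ix')=0$ for every $x'\in M$ and $i\in[N]$. This yields the claimed isomorphism $H^0_{\ch}(\Pi^{N+1}V,\Pi^{N+1}M)\cong\Cas(V,M)$.

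The main obstacle is the careful unwinding of the operadic composition $X\circ_1 Y$ when $Y$ has arity $0$: interpreting the empty $\Lambda$-list for the arity-$0$ factor, confirming the substitution $\Lambda_1'=0$, and verifying via \cref{lem:W:VnLn=VLn-1} that the result reduces to the evaluation $[x_\Lambda a]|_{\Lambda=0}$. Sign tracking through the parity shift $\Pi^{N+1}$, the Maurer-Cartan correspondence \eqref{eq:VAlbra}, and the inclusion $M\inj V\oplus M$ is tedious but essentially routine.
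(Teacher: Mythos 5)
Your proposal is correct and follows essentially the same route as the paper: identify $C^0$ with $M/\nabla M$, observe $H^0_{\ch}=\Ker\pdd^0$, and reduce $\pdd^0 Y$ to $(\pm 1)\,a_{-\nabla}x$ via the single composition $X\circ_1 Y$ with $\Lambda_1'=0$, so that the kernel is exactly $\Cas(V,M)$ (the paper passes through the $\frS_2$-invariance of $X$ where you invoke the right-action convention \eqref{eq:ract}, which amounts to the same thing). Your extra check that the Casimir condition is well defined on $M/\nabla M$ via sesquilinearity is a sensible addition that the paper leaves implicit in \cref{dfn:Cas}.
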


\begin{proof}
By definition, we have
\begin{align*}
 C^0(\Pi^NV, \Pi^NM) = \Hom_{\bbK}(\Pi^{N+1}\bbK, \Pi^{N+1}M/\nabla \Pi^{N+1}M). 
\end{align*}
It is clear that the map \cref{{eq:Hch0Cas}} is an isomorphism of linear superspaces. 
For $Y \in C^0(\Pi^{N+1}V, \Pi^{N+1}M)$, we can represent $Y(1_\bbK)=\int x$ with $x \in M$. Then we have
\begin{align*}
 (\pdd^0Y)(a \otimes 1_\bbK)
&=(X \circ_1 Y)(a \otimes 1_\bbK)
 =X_{0,-\nabla}(x \otimes a \otimes 1_\bbK)\\
&=(-1)^{\wt{p}(a)\wt{p}(x)}X_{-\Lambda, 0}(a \otimes x \otimes 1_\bbK) \\
&=(-1)^{\wt{p}(a)\wt{p}(x)}(-1)^{p(a)(\ol{N}+\od)}a_{-\nabla}x
\end{align*}
for any $a \in V$. 
Thus, the restriction of \cref{eq:Hch0Cas} to $H_{\ch}^0(V,M)$ gives the desired isomorphism. 
\end{proof}

Next, we turn to describe the first cohomology.

\begin{dfn}\label{dfn:der}
Let $M$ be a $V$-module. 
\begin{enumerate}
\item 
An $\clH_W$-supermodule homomorphism $D\colon V \to M$ is called a \emph{derivation on $V$} (with values in $M$) if it satisfies  
\begin{align*}
D[a_\Lambda b]
&=(-1)^{p(D)\ol{N}}Da_\Lambda b+(-1)^{p(D)(p(a)+\ol{N})}[a_\Lambda Db], \\
D(ab)
&=Da\cdot b+(-1)^{p(D)p(a)}a\cdot Db
\end{align*}
for any $a,b \in V$.
Here we used the integrals of the right $\Lambda$-action \eqref{eq:ract}. 
We denote by $\Der(V, M)$ the sub-superspace of $\Hom_{\clH_W}(V, M)$ consisting of all derivations of $V$ with values in $M$. 

\item
A derivation $D\colon V\to M$ is called an inner derivation if there exists $x\in M$ such that
\begin{align*}
 \rst{x_{\Lambda}a}{\Lambda=0}=-(-1)^{p(a)p(x)+\ol{N}}a_{-\nabla} x \quad (a\in V). 
\end{align*}
We denote by $\Ind(V,M)$ the linear sub-superspace of $\Der(V, M)$ consisting of all inner derivations. 
\end{enumerate}
\end{dfn}

It is clear that an $\clH_W$-supermodule morphism $D\colon V\to M$ is a derivation if and only if it satisfies 
\begin{align*}
 D\Bigl(\int^\Lambda d\Gamma[a_\Gamma b]\Bigr) = 
 (-1)^{p(D)\ol{N}}\int^\Lambda d\Gamma(Da_\Gamma b) + (-1)^{p(D)(p(a)+\ol{N})} \int^\Lambda d\Gamma (a_\Gamma Db)
\end{align*}
for any $a, b\in V$. 

\begin{prp}\label{prp:Hch1Der}
The map
\begin{align}\label{eq:Hch1Der}
C^1(\Pi^{N+1}V, \Pi^{N+1}M)\to \Hom_{\clH_W}(V, M), \quad 
Y\mapsto (-1)^{p(Y)(\ol{N}+\od)}Y_{-\nabla}(\cdot\otimes 1_\bbK)
\end{align}
gives the following isomorphism of linear superspace: 
\begin{align*}
 H^1_{\ch}(\Pi^{N+1}V, \Pi^{N+1}M)\cong \Der(V, M)/\Ind(V, M). 
\end{align*}
\end{prp}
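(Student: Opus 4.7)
The plan is to adapt the strategy of \cite[\S7]{BDHK}, which handles the non-SUSY analogue for ordinary vertex algebras, with careful tracking of the parity shifts $\Pi^{N+1}$ and of the superalgebra signs. There are three steps: identifying the cochain map as an isomorphism, matching cocycles with derivations, and matching coboundaries with inner derivations.

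First, I would establish that \eqref{eq:Hch1Der} is an isomorphism of linear superspaces. Because $\frS_1$ is trivial, $\clO_1^{\star\rmT} = \bbK$, and the right $\clD_1^{\rmT}$-action on $V \otimes \bbK$ reduces to the $\clH_W$-action coming from the left module structure of $V$, the space $C^1(\Pi^{N+1}V, \Pi^{N+1}M)$ identifies with $\Hom_{\clH_W}(\Pi^{N+1}V, \Pi^{N+1}M_\nabla[\Lambda])$. Applying the isomorphism of \cref{lem:W:VnLn=VLn-1} for $n=1$ (evaluating $\Lambda$ at $-\nabla$) and stripping the parity-shift functor $\Pi^{N+1}$ yields exactly the map \eqref{eq:Hch1Der}, with the sign $(-1)^{p(Y)(\ol{N}+\od)}$ normalizing for the parity shift in the same way as in the bracket formula \eqref{eq:VAlbra} of \cref{thm:W:VA}.

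Next, I would show that cocycles correspond to derivations. For $Y \in C^1$ with associated $D \in \Hom_{\clH_W}(V, M)$, the cocycle condition $\pdd^1 Y = [X, Y] = 0$ need only be tested against $a \otimes b \otimes f$ for $f = 1_\bbK$ and $f = z_{1,2}^{-1}$, since $\clD_2^{\rmT}$-linearity then propagates it to all of $\clO_2^{\star\rmT}$ (the $\zeta_{1,2}^i$-dependence is handled by $D_{\zeta_k^i}$-differentiation, converting to $S^i$-actions). Expanding $[X, Y]$ through the infinitesimal compositions $X \circ_1 Y$, $X \circ_2 Y$ and $Y \circ_1 X$ along the lines of \cref{lem:W:circ}, and then using the residue dictionary of \cref{lem:intbra,prp:W:skecom,prp:W:Jqas} to rewrite the resulting chiral-operad expressions in terms of the $\Lambda$-bracket and multiplication, the vanishing on $f = 1_\bbK$ becomes the $\Lambda$-bracket Leibniz identity $D[a_\Lambda b] = (-1)^{p(D)\ol{N}} Da_\Lambda b + (-1)^{p(D)(p(a)+\ol{N})}[a_\Lambda Db]$, while the vanishing on $f = z_{1,2}^{-1}$ becomes the product Leibniz identity $D(ab) = Da \cdot b + (-1)^{p(D)p(a)} a \cdot Db$, both matching \cref{dfn:der}(1).

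Finally, for $Z \in C^0(\Pi^{N+1}V, \Pi^{N+1}M)$ with $Z(1_\bbK) = \int x$ for some $x \in M$, the computation already carried out in the proof of \cref{prp:Hch0Cas} gives $(\pdd^0 Z)(a \otimes 1_\bbK)$ as a nonzero scalar multiple of $a_{-\nabla} x$. Translating through \eqref{eq:Hch1Der} and converting the left $\Lambda$-action on $M$ to the right $\Lambda$-action via \eqref{eq:ract} recovers precisely the inner-derivation identity $\rst{x_\Lambda a}{\Lambda = 0} = -(-1)^{p(a)p(x)+\ol{N}} a_{-\nabla} x$ of \cref{dfn:der}(2); conversely every inner derivation arises in this way from a well-defined class $\int x \in M/\nabla M$. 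The main obstacle throughout is the meticulous sign bookkeeping required to coordinate $\Pi^{N+1}$, the odd parity of $X$, the left/right action conversion, and the $\ol{N}$-parity twist; no new conceptual input beyond \cref{thm:W:VA} is required, since the proposition is essentially the degree-one shadow of that theorem applied to the square-zero extension $V \oplus M$.
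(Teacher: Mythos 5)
Your proposal follows essentially the same route as the paper's proof: identify $C^1(\Pi^{N+1}V,\Pi^{N+1}M)\cong\Hom_{\clH_W}(V,M)$ via \cref{lem:W:VnLn=VLn-1}, match $\Ker\pdd^1$ with $\Der(V,M)$ by evaluating $[X,Y]$ on $a\otimes b\otimes z_{1,2}^{-1}$, and match $\Img\pdd^0$ with $\Ind(V,M)$ using the computation from \cref{prp:Hch0Cas}. The only cosmetic difference is that the paper tests the cocycle condition on $z_{1,2}^{-1}$ alone and packages both Leibniz rules into the single identity for $\int^\Lambda d\Gamma[a_\Gamma b]$ (the remark after \cref{dfn:der}), whereas you split them between the test functions $1_\bbK$ and $z_{1,2}^{-1}$; these are equivalent by $\clD_2^{\rmT}$-linearity.
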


\begin{proof}
By definition, we have
\begin{align*}
 C^1(\Pi^{N+1}V, \Pi^{N+1}M) = \Hom_{\clD_1^{\rmT}}
  \bigl(\Pi^{N+1}V\otimes\clO_1^{\star\rmT}, (\Pi^{N+1}M)_\nabla[\Lambda_1]\bigr). 
\end{align*}
We denote by $F_1$ the map \eqref{eq:Hch1Der}. 
It is clear that $F_1\colon C^1(\Pi^{N+1}V, \Pi^{N+1}M)\to \Hom_{\clH_W}(V, M)$ is an isomorphism of linear superspace. 

For $Y\in C^1(\Pi^{N+1}V, \Pi^{N+1}M)$ and $a, b\in V$, we have
\begin{align*}
&(\pdd^1Y)(a \otimes b \otimes z_{1,2}^{-1}) \\
&=(X\circ_1Y)(a \otimes b \otimes z_{1,2}^{-1})
 +(X\circ_1Y)^{(1,2)}(a \otimes b \otimes z_{1,2}^{-1})
 -(-1)^{\wt{p}(Y)}(Y\circ_1X)(a \otimes b \otimes z_{1,2}^{-1}) \\
&=X_{\Lambda_1, \Lambda_2}(Y_{-\nabla}(a \otimes 1_\bbK)\otimes z_{1,2}^{-1})
 +(-1)^{\wt{p}(a)\wt{p}(b)}X_{\Lambda_2,\Lambda_1}(
   Y_{-\nabla}(b \otimes 1_\bbK)\otimes a \otimes z_{2,1}^{-1}) \\
&\hspace{115pt}
 -(-1)^{\wt{p}(Y)} Y_{\Lambda_1+\Lambda_2}(
   X_{\Lambda_1,-\Lambda_1-\nabla}(a \otimes b \otimes z_{1,2}^{-1})\otimes 1_\bbK) \\
&=\pm \Bigl(
  (-1)^{p(F_1(Y))\ol{N}}\int^\Lambda d\Gamma (F_1(Y)a_\Gamma b)
  +(-1)^{p(F_1(Y))(p(a)+\ol{N})}\int^\Lambda d\Gamma(a_\Gamma F_1(Y)b),
  -F_1(Y)\Bigl(\int^\Lambda d\Gamma [a_\Gamma b]\Bigr)\Bigr). 
\end{align*}
Thus, $F_1(Y)$ belongs to $\Der(V,M)$ if and only if $Y\in\Ker \pdd^1$. 
Also, by the proof of \cref{prp:Hch0Cas}, we have
\begin{align*}
 F_1(\pdd^0 Y)(a) = -(-1)^{p(a)p(x)+\ol{N}}a_{-\nabla}x \quad (a \in V)
\end{align*}
for $Y\in C^0(V,M)$, where we represented $Y(1_\bbK) = \int x \in M/\nabla M$. Thus, for $Y \in C^1(V,M)$, $F_1(Y)$ is an inner derivation if and only in $Y\in\Img\pdd^0$. Hence $F_1$ induces the desired isomorphism. 
\end{proof}


Finally, we give the interpretation of the second cohomology. For that purpose, we define the extension of non-unital $N_W=N$ SUSY vertex algebras (\cref{dfn:NWext}). To define it, we need to introduce morphisms of non-unital $N_W=N$ SUSY vertex algebras. 
For non-unital $N_W=N$ SUSY vertex algebras $V$ and $W$, a \emph{morphism $V \to W$} is an even $\clH_W$-supermodule morphism $\varphi\colon V \to W$ satisfying
\begin{align*}
 \varphi\Bigl(\int^\Lambda d\Gamma[a_\Gamma b]\Bigr) = 
 \int^\Lambda d\Gamma[\varphi(a)_\Gamma \varphi(b)] 
\end{align*}
for any $a,b \in V$.
We see that non-unital $N_W=N$ SUSY vertex algebras and their morphisms form an abelian category. Here, an exact sequence means a diagram of non-unital $N_W=N$ SUSY vertex algebras
\[
 0 \lto U \xrr{\varphi} V \xrr{\psi} W \lto 0
\]
with $\varphi$ injective, $\psi$ surjective, and $\Ker \psi = \Img \varphi$.

\begin{dfn}\label{dfn:NWext}
Let $M$ be a $V$-module.
\begin{enumerate}
\item 
An exact sequence of non-unital $N_W=N$ SUSY vertex algebras
\begin{equation}\label{eq:coh:NWext}
 0 \lto M \xrr{\varphi} E \xrr{\psi} V \lto  0
\end{equation}
is called an extension of $V$ by $M$ if it satisfies 
\begin{align*}
 \varphi\Bigl(\int^\Lambda d\Gamma[\psi(a)_\Gamma x]\Bigr) = \int^\Lambda d\Gamma [a_\Gamma \varphi(x)]. 
\end{align*}
Here we regard $M$ as a non-unital $N_W=N$ SUSY vertex algebra with trivial operations.

\item
An extension of $V$ by module $M$ is called \emph{$\clH_W$-split} if the diagram \eqref{eq:coh:NWext} is a split exact sequence of $\clH_W$-supermodules. 
\end{enumerate}
\end{dfn}



\begin{lem}\label{lem:NWext}
For $V$ and $M$ as above, there exists a surjection
\begin{align}\label{eq:extsrj}
\begin{split}
&\left\{Y \in \MC\bigl(L(\oPchW{\Pi^{N+1}(V \oplus M)})\bigr)_{\od} 
 \;\middle|\;
 \begin{array}{l}
 (X-Y)(a \otimes b \otimes z_{1,2}^{-1})\in M_\nabla[\Lambda_1, \Lambda_2] \\
 (X-Y)(a \otimes x \otimes z_{1,2}^{-1})=0 \\
 Y(x \otimes y \otimes z_{1,2}^{-1})=0 \ (a,b \in V, \, x,y \in M)
 \end{array}
 \right\}\\
&\lto \{\text{isomorphism classes of $\clH_W$-split extexsions of $V$ by $M$}\}. 
\end{split}
\end{align}
Here an isomorphism of $\clH_W$-split extensions of $V$ by $M$ is defined to be an isomorphism of exact sequences of non-unital $N_W=N$ SUSY vertex algebras. 
\end{lem}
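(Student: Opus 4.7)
The plan is to construct the map by applying \cref{thm:W:VA} to $\Pi^{N+1}(V\oplus M)$: each Maurer-Cartan solution $Y$ in the indicated set yields a non-unital $N_W=N$ SUSY vertex algebra structure $(V\oplus M,[\cdot_\Lambda\cdot]_Y,\mu_Y)$. To this I associate the sequence
\begin{equation*}
 0 \lto M \xrr{\varphi} V\oplus M \xrr{\psi} V \lto 0
\end{equation*}
with $\varphi$ the canonical injection and $\psi$ the canonical projection, which is automatically an $\clH_W$-split exact sequence of $\clH_W$-supermodules. The remaining task is to show that $\varphi$, $\psi$ are morphisms of non-unital $N_W=N$ SUSY vertex algebras and that the integral compatibility in \cref{dfn:NWext}(1) holds.

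Next I will translate the three conditions on $Y$ into statements about the induced algebra structure, using the identity
\begin{equation*}
 Y_{\Lambda,-\Lambda-\nabla}(u\otimes v\otimes z_{1,2}^{-1}) = \pm\int^\Lambda d\Gamma\,[u_\Gamma v]_Y
\end{equation*}
from \cref{lem:intbra} (and the analogous identity for the module action). Condition (3), that $Y$ vanishes on $M\otimes M\otimes z_{1,2}^{-1}$, says that both $[\cdot_\Lambda\cdot]_Y$ and $\mu_Y$ vanish on $M\otimes M$, so $M$ is an abelian sub-vertex-algebra of $V\oplus M$; this makes $\varphi$ a morphism. Condition (1), that $(X-Y)(a\otimes b\otimes z_{1,2}^{-1})\in M_\nabla[\Lambda_1,\Lambda_2]$ for $a,b\in V$, says that modulo $M$ the bracket and product on $V\oplus M$ coincide with those on $V$; this makes $\psi$ a morphism. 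Condition (2), that $X=Y$ on $V\otimes M\otimes z_{1,2}^{-1}$, says that the left $\Lambda$-action and product of $V$ on the $M$-summand coincide with the given $V$-module structure; feeding this into $\varphi(\int^\Lambda d\Gamma[\psi(a)_\Gamma x])=\int^\Lambda d\Gamma[a_\Gamma\varphi(x)]$ gives the integral compatibility in \cref{dfn:NWext}(1).

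For surjectivity, given an $\clH_W$-split extension $0\to M\xrr{\varphi'} E\xrr{\psi'} V\to 0$, a choice of $\clH_W$-linear splitting produces an $\clH_W$-supermodule identification $E\cong V\oplus M$ under which $\varphi'$ and $\psi'$ become the canonical injection and projection. Transporting the non-unital $N_W=N$ SUSY vertex algebra structure on $E$ to $V\oplus M$ and invoking \cref{thm:W:VA} yields an odd Maurer-Cartan element $Y$; the three hypotheses on $Y$ now follow in reverse from, respectively, the fact that $\psi'$ is an algebra morphism (condition (1)), the integral compatibility (condition (2)), and the fact that $\varphi'$ is an algebra morphism with $M^2=0$ in $E$ (condition (3)). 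Finally, two $Y$'s produce isomorphic extensions iff they are conjugate under an $\clH_W$-automorphism of $V\oplus M$ of the form $(v,m)\mapsto(v,m+\phi(v))$ for some $\phi\in\Hom_{\clH_W}(V,M)_{\ev}$, which shows that the map factors through isomorphism classes.

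The main obstacle is the bookkeeping of \cref{eq:VAlbra}--\cref{eq:VAprod}: one must verify that the equalities of bracket and of product separately are equivalent to the single equality of $Y(\cdot\otimes\cdot\otimes z_{1,2}^{-1})$, and similarly that the conditions (1)--(3) phrased at the integral level are faithful to the underlying bracket/product data. The uniqueness clause in \cref{lem:intbra} resolves the forward direction, but one must carefully track the parity shift $\Pi^{N+1}$ and the $(-1)^{p(a)(\ol{N}+\od)}$-type signs when translating between $Y$ (living on $\Pi^{N+1}(V\oplus M)$) and the structure maps on $V\oplus M$ itself.
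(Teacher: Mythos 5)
Your proposal is correct and follows essentially the same route as the paper: the map is $Y \mapsto E_Y$, where $E_Y$ is $V \oplus M$ with the structure corresponding to $Y$ via \cref{thm:W:VA}, and surjectivity comes from transporting the structure of an arbitrary $\clH_W$-split extension through a chosen splitting. The paper's own proof is terser (it omits the verification that the three conditions on $Y$ translate into the extension axioms, and defers the fibre analysis to \cref{thm:W:coh}(3)), but the content matches.
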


\begin{proof}
For $Y$ in the left hand side of \eqref{eq:extsrj}, let $E_Y$ denote the $\clH_W$-supermodule $V \oplus M$ with the non-unital $N_W=N$ SUSY vertex algebra structure corresponding to $Y$. Then we have an $\clH_W$-split extension
\[
 0 \lto M \xrr{\varphi} E_Y \xrr{\psi} V \lto 0, 
\]
where $\varphi\colon M \to V \oplus M$ is the canonical inclusion and $\psi\colon V \oplus M \to V$ is the canonical projection. The map $Y\mapsto E_Y$ gives rise to the surjection \eqref{eq:extsrj}. 
\end{proof}

Similarly as the Lie algebra cohomology $H^n_{\oLie}(L,M)$ recalled in \cref{ss:coh:CE}, and as the non-SUSY case \cite[Theorem 7.6]{BDHK}, the lower degree SUSY vertex algebra cohomology $H^n_{\ch}(V, M)$, $n=0,1,2$, have explicit description.

\begin{thm}\label{thm:W:coh}
Let $M$ be a module over a non-unital $N_W=N$ SUSY vertex algebra $V$. 
\begin{enumerate}
\item 
For the zeroth cohomology, we have an isomorphism 
\[
 H_{\ch}^0(\Pi^{N+1}V, \Pi^{N+1}M) \cong \Cas(V, M)
\]
of linear superspace, 
where $\Cas(V, M)$ is the linear superspace of Casimir elements (\cref{dfn:Cas}). 

\item 
For the first cohomology, we have an isomorphism
\begin{align*}
 H_{\ch}^1(\Pi^{N+1}V, \Pi^{N+1}M) \cong \Der(V, M)/\Ind(V, M)
\end{align*}
where $\Der(V, M)$ is the linear superspace of derivations and $\Ind(V, M)$ is the linear superspace of inner derivations (\cref{dfn:der}). 

\item 
For the odd part of the second cohomology, we have an linear isomorphism
\begin{align*}
 H_{\ch}^2(V,M)_{\od} \cong 
 \{\text{isomorphism classes of $\clH_W$-split extensions of $V$ by modules $M$}\}.
\end{align*}
\end{enumerate}
\end{thm}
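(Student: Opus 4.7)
The plan is as follows. Parts (1) and (2) are already established as \cref{prp:Hch0Cas} and \cref{prp:Hch1Der}, so only part (3) requires new work. For (3), the strategy is to combine \cref{lem:NWext} with an identification of its left-hand side with the set of odd $2$-cocycles modulo coboundaries.

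First, I would show that the set of Maurer--Cartan elements described in \cref{lem:NWext} is in natural bijection with $\Ker(\pdd^2) \cap C^2(\Pi^{N+1}V, \Pi^{N+1}M)_{\od}$. Let $X_0$ denote the odd Maurer--Cartan element associated, via \cref{thm:W:VA}, with the trivial $\clH_W$-split extension $V \oplus M$, so that $X_0$ encodes both the SUSY vertex algebra structure on $V$ and the $V$-module structure on $M$ (with $M \cdot M = 0$). The three conditions on $Y$ in \cref{lem:NWext} translate, upon setting $Z \ceq Y - X_0$, into saying that $Z$ sends $(V \oplus M)^{\otimes 2}$ into $M$-valued outputs and vanishes whenever any input is from $M$; this is exactly the condition $Z \in C^2(\Pi^{N+1}V, \Pi^{N+1}M)_{\od}$. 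Since the output of $Z$ lies in $M$ while $Z$ annihilates any $M$-input, each infinitesimal composition $Z \circ_i Z$ vanishes, and hence $Z \square Z = 0$. The Maurer--Cartan equation $(X_0 + Z) \square (X_0 + Z) = 0$ therefore reduces, using $X_0 \square X_0 = 0$, to $[X_0, Z] = 0$, which is precisely the cocycle condition $\pdd^2 Z = 0$.

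Second, I would analyze when two cocycles $Z, Z'$ give isomorphic extensions. Any isomorphism of $\clH_W$-split extensions $\varphi\colon E_{X_0 + Z} \sto E_{X_0 + Z'}$ must restrict to the identity on $M$ and induce the identity on $V$; being $\clH_W$-linear between the two copies of $V \oplus M$, it is of the form $\varphi = \id + \psi$ for a unique even $\psi \in \Hom_{\clH_W}(V, M)$ (extended by zero on $M$). Viewing $\psi$ as an element of $C^1(\Pi^{N+1}V, \Pi^{N+1}M)_{\od}$ via \eqref{eq:Hch1Der}, a direct computation of the compatibility of $\id + \psi$ with both $\int^\Lambda d\Gamma [\cdot_\Gamma \cdot]$ and the multiplication $\mu$ translates to the coboundary identity $Z' - Z = \pdd^1 \psi$. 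Combined with the surjectivity from \cref{lem:NWext}, this yields the required bijection between $H^2_{\ch}(V, M)_{\od}$ and isomorphism classes of $\clH_W$-split extensions of $V$ by $M$.

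The main obstacle is the careful sign bookkeeping throughout, especially the verification that $Z \square Z = 0$ for $Z \in C^2(\Pi^{N+1}V, \Pi^{N+1}M)_{\od}$ (which requires going through all infinitesimal compositions \eqref{eq:1:circ_i} and using that $M$ has trivial SUSY vertex algebra operations in the split), and the derivation of the coboundary identity $\pdd^1 \psi = Z' - Z$ from compatibility of $\id + \psi$ with the bracket and the multiplication. The parity shift $\Pi^{N+1}$ intervenes nontrivially in matching the signs between \cref{thm:W:VA} (relating Maurer--Cartan elements to SUSY vertex algebra structures) and \cref{dfn:der} together with \cref{prp:Hch1Der} (relating $C^1$ to derivations).
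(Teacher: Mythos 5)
Your proposal is correct and follows essentially the same route as the paper: parts (1) and (2) are quoted from \cref{prp:Hch0Cas} and \cref{prp:Hch1Der}, and for (3) the paper likewise passes through the map $Y \mapsto X+Y$ (your $Z = Y - X_0$) to identify odd $2$-cocycles with the Maurer--Cartan set of \cref{lem:NWext} (using exactly your observation that $Y \square Y = 0$ because $Y$ is $M$-valued and kills $M$-inputs), and then characterizes isomorphic extensions via morphisms of the form $\id + \Phi_f$ with $\Phi_f \in \Hom_{\clH_W}(V,M)_{\ev}$, yielding the coboundary identity $Y - Y' = \pdd^1(F_1^{-1}(\Phi_f))$.
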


\begin{proof}
We have already proved (1), (2) in \cref{prp:Hch0Cas} and \cref{prp:Hch1Der}. Let us show (3). 

Define a map $F_2'\colon C^2(V,M)_{\od} \to L^1\bigl(\oPchW{\Pi^{N+1}(V \oplus M)}\bigr)_{\od}$ by $F_2'(Y) \ceq X+Y$. If $Y \in (\Ker\pdd^2)_{\od}$, then $F_2'(Y)$ satisfies $F_2'(Y) \square F_2'(Y)=0$, so $F_2'$ gives a surjection from $(\Ker \pdd^2)_{\od}$ to 
\begin{align*}
 \left\{Y \in \MC\bigl(L(\oPchW{\Pi^{N+1}(V\oplus M)})\bigr)_{\od} 
 \;\middle|\;
 \begin{array}{l}
 (X-Y)(a\otimes b\otimes z_{1, 2}^{-1})\in M_\nabla[\Lambda_1,\Lambda_2] \\
 (X-Y)(a\otimes x\otimes z_{1, 2}^{-1})=0\\
 Y(x\otimes y\otimes z_{1, 2}^{-1})=0 \ (a,b \in V, \, x,y \in M)
 \end{array}
 \right\}.
\end{align*} 
Thus, the composition with the surjection in \cref{lem:NWext} gives a surjection
\begin{align*}
 F_2\colon (\Ker\pdd^2)_{\od} \lto 
 \{\text{isomorphism classes of $\clH_W$-split extexsions of $V$ by module $M$}\}. 
\end{align*}
Hence it is enough to show that $F_2(Y)=F_2(Y')$ is equivalent to $Y-Y'\in (\Img \pdd^1)_{\od}$ for $Y,Y' \in (\Ker\pdd^2)_{\od}$. 

Note that, for $Y, Y'\in(\Ker\pdd^2)_{\od}$, we have $F_2(Y)=F_2(Y')$ if and only if there exists a morphism of non-unital $N_W=N$ SUSY vertex algebras $f\colon E\to E'$ satisfying
\begin{align}\label{eq:exthom}
 f(a)-a \in M, \quad f(x)=x \quad (a \in V, \, x \in M). 
\end{align}
Here the symbol $E$ (resp.\ $E'$) denotes the $\clH_W$-supermodule $V\oplus M$ with the non-unital $N_W=N$ SUSY vertex algebra structure corresponding to $F_2'(Y)$ (resp.\ $F_2'(Y')$). For an even $\clH_W$-supermodule homomorphism $f\colon E\to E'$ satisfying \eqref{eq:exthom}, define an even linear map $\Phi_f\colon V\to M$ by $\Phi_f(a)\ceq f(a)-a$ for $a\in V$. Then the mapping $f \mapsto \Phi_f$ gives a bijection 
\begin{align*}
\Phi\colon\{f\in\Hom_{\clH_W}(E, E')_{\ev}\ \text{satisfying \eqref{eq:exthom}}\}
\lsto \Hom_{\clH_W}(V, M)_{\ev}. 
\end{align*}
For an even $\clH_W$-supermodule homomorphism $f\colon E\to E'$ satisfying \eqref{eq:exthom} and $a, b\in V$, we have
\begin{align*}
 f\Bigl(\int^\Lambda d\Gamma [a_\Gamma b]_E\Bigr)
 &=\Phi_f\Bigl(\int^\Lambda d\Gamma [a_\Gamma b]_E\Bigr)+\int^\Lambda d\Gamma [a_\Gamma b]_E, \\
 \int^\Lambda d\Gamma [f(a)_\Gamma f(b)]
 &=\int^\Lambda d\Gamma (\Phi_f(a)_\Gamma b)+\int^\Lambda d\Gamma (a_\Gamma \Phi_f(b))
 +\int^\Lambda d\Gamma [a_\Gamma b]_{E'}.
\end{align*}
Thus, $f\colon E \to E'$ is a morphism of non-unital $N_W=N$ SUSY vertex algebras if and only if
\begin{align*}
\int^\Lambda d\Gamma [a_\Gamma b]_E-\int^\Lambda d\Gamma [a_\Gamma b]_{E'}
=\int^\Lambda d\Gamma (\Phi_f(a)_\Gamma b)+\int^\Lambda d\Gamma (a_\Gamma \Phi_f(b))
-\Phi_f\Bigl(\int^\Lambda d\Gamma [a_\Gamma b]_E\Bigr)
\end{align*}
for any $a, b\in V$, which is equivalent to $Y-Y'=\pdd^1(F_1^{-1}(\Phi_f))$. Here $F_1$ is the isomorphism $C^1(V, M) \sto \Hom_{\clH_W}(V, M)$ defined in the proof of \cref{prp:Hch1Der}. Therefore we find that $F_2(Y)=F_2(Y')$ is equivalent to $Y-Y'\in (\Img\pdd^1)_{\od}$ for $Y,Y'\in(\Ker\pdd^2)_{\od}$.
\end{proof}

\subsection{Cohomology of $N_K=N$ SUSY vertex algebras}\label{ss:coh:K}

The theory of $N_W=N$ SUSY vertex algebra cohomology in \cref{ss:coh:W} carries over to the $N_K=N$ case with minor modifications. So we only give the main definitions and statements.

\begin{dfn}
Let $(V,\nabla,[\cdot_\Lambda\cdot])$ be an $N_K=N$ SUSY Lie conformal algebra, $(M,\nabla)$ be a left $\clH_K$-supermodule and $\rho_\Lambda\colon V\otimes M\to M[\Lambda]$ be a linear map of parity $\ol{N}$. We denote $a_\Lambda x \ceq \rho_\Lambda(a \otimes x)$ for $a \in V$, $x \in M$. 
A triple $(M,\nabla,\rho_\Lambda)$ is called a \emph{module over $(V,\nabla,[\cdot_\Lambda \cdot])$} if it satisfies the conditions (i) and (ii) in \cref{dfn:W:LCAmod} replacing the $(1|N)_W$-supervariables $\Lambda,\Lambda_1,\Lambda_2$ by the corresponding $(1|N)_K$-supervariables. 
\end{dfn}

\begin{dfn}
Let $(V,\nabla,[\cdot_\Lambda\cdot],\mu)$ be a non-unital $N_K=N$ SUSY vertex algebra, $(M,\nabla,\rho_\Lambda)$ be a module over the $N_K=N$ Lie conformal algebra $(V,\nabla,[\cdot_\Lambda\cdot])$ and $\rho\colon V \otimes M \to M$ be an even linear map. We denote $a \cdot x \ceq \rho(a \otimes x)$. A tuple $(M,\nabla,\rho_\Lambda, \rho)$ is called a \emph{module over $(V,\nabla,[\cdot_\Lambda\cdot],\mu)$} if it satisfies the conditions (i)--(iii) in \cref{dfn:W:VAmod} replacing the $(1|N)_W$-supervariables $\Lambda,\Gamma$ by the corresponding $(1|N)_K$-supervariables. 
\end{dfn}

For a module $M$ over a non-unital $N_K=N$ SUSY vertex algebra $V$, one can define the cochain complex of $V$ with coefficients in $M$ as in \eqref{eq:W:comp}. Also, the notions of Casimir elements, derivations and extensions are defined in the similar way. Then we find that \cref{thm:W:coh} is valid for a non-unital $N_K=N$ SUSY vertex algebra $V$ and its module $M$.

\subsection{Concluding remarks}\label{ss:cnc}

As mentioned in \cref{s:0}, the paper \cite{BDHK} continues to the development of the cohomology theory of vertex algebras in \cite{BDHK2,BDK20,BDK21,BDKV21}. One of the main topics of these works is the relationship between the cohomology theory of vertex algebras and that of vertex Poisson algebras. As far as we understand, the status of the study of this relationship is still complicated, although some fundamental results have been established. 

It is therefore natural to consider that the next step in the study of our cohomology theory for SUSY vertex algebras is to study the operad theory and the cohomology theory of SUSY vertex Poisson algebras. We leave this to a future work \cite{NY2}.


\end{document}